\newcounter{mycount}
\newenvironment{romlist}{\begin{list}{(\roman{mycount})}{\usecounter{mycount}\labelwidth=1cm\itemsep 0pt}}{\end{list}}
\newenvironment{Romlist}{\begin{list}{(\Roman{mycount})}{\usecounter{mycount}\labelwidth=1cm\itemsep 0pt}}{\end{list}}
\numberwithin{equation}{subsection}
\newtheorem{theorem}[equation]{Theorem}
\newtheorem{proposition}[equation]{Proposition}
\newtheorem{corollary}[equation]{Corollary}
\newtheorem{lemma}[equation]{Lemma}
\newtheorem{definition}[equation]{Definition}
\newtheorem{remark}[equation]{Remark}
\DeclareMathSymbol{\leqslant}{\mathalpha}{AMSa}{"36}
\DeclareMathSymbol{\geqslant}{\mathalpha}{AMSa}{"3E}
\renewcommand{\le}{\;\leqslant\;}
\renewcommand{\ge}{\;\geqslant\;}
\def\AA{{\mathbb A}}\def\BB{{\mathbb B}}\def\DD{{\mathbb D}}\def\EE{{\mathbb E}}\def\HH{{\mathbb H}}\def\MM{{\mathbb M}}\def\NN{{\mathbb N}}\def\PP{{\mathbb P}}\def\QQ{{\mathbb Q}}\def\RR{{\mathbb R}}\def\WW{{\mathbb W}}\def\ZZ{{\mathbb Z}}
\def\sA{\mathcal A}\def\sB{\mathcal B}\def\sC{\mathcal C}\def\sD{\mathcal D}\def\sE{\mathcal E}\def\sF{\mathcal F}\def\sH{\mathcal H}\def\sJ{\mathcal J}\def\sL{\mathcal L}\def\sN{\mathcal N}\def\sP{\mathcal P}\def\sR{\mathcal R}\def\sS{\mathcal S}\def\sU{\mathcal U}\def\sV{\mathcal V}\def\sW{\mathcal W}
\def\O{\mathrm{O}}
\def\qq{\qquad}
\def\q{\quad}
\def\oo{\infty}
\def\lest{\le_{\!\!\mathrm{st}}\ }
\def\gest{\ge_{\!\!\mathrm{st}}\ }
\def\a{\alpha}
\def\b{\beta}
\def\d{\delta}
\def\eps{\varepsilon}
\def\l{\lambda}
\def\s{\sigma}
\def\t{\tau}
\def\th{\theta}
\def\n{\eta}
\def\Si{\Sigma}
\def\L{\Lambda}
\def\Om{\Omega}
\def\om{\omega}
\def\De{\Delta}
\def\ba{\mathbf a}
\def\bb{\mathbf b}
\def\be{\mathbf e}
\def\bn{\mathbf n}
\def\bu{\mathbf u}
\def\bx{\mathbf x}
\def\sm{\setminus}
\def\lra{\leftrightarrow}
\def\nlra{\nleftrightarrow}
\def\emptyset{\varnothing}
\def\bcdot{\,\cdot\,}
\def\qtext#1{\q\text{#1}\q}
\def\pc{p_\mathrm{c}}
\def\pd{\partial}
\def\dd{\,\mathrm{d}}
\def\fv{\vec{f}}
\def\cg{c_\mathrm{cg}}
\def\cgf{c_\mathrm{cg}'}
\def\cdil{C_\mathrm{dil}}
\def\c{\mathrm{c}}
\def\f{\mathrm{f}}
\def\g{\mathfrak{g}}
\def\w{\mathrm{w}}
\def\bc{\b_\c}
\def\hbc{\hat{\b}_\c}
\def\pc{p_\c}
\def\hm{\hat{\mu}^{J,-,h}}
\def\disconnected{\mathrm{D}}
\def\thmconstant{C_0}
\def\propconstanta{C_1}
\def\truncationConstant{C_\mathrm{cut}}
\def\truncationConstantB{c_\mathrm{cut}}
\def\energyStabilityConstant{c_\mathrm{stb}}
\def\stabilityConstant{c_\mathrm{hs}}
\def\BV{\mathrm{BV}}
\def\Bcritical{B_\c^\th}
\def\Bcriticall{B_\c^{2\pi}}
\def\Bmax{B_\mathrm{max}^\th}
\def\Bmaxx{B_\mathrm{max}^{2\pi}}
\def\Bmin{B_\mathrm{min}^\th}
\def\Bminn{B_\mathrm{min}^{2\pi}}
\def\epsWMbc{\eps_\mathrm{wm}}
\def\epsPMbc{\eps_\mathrm{pm}}
\def\epsStab{\eps_\mathrm{stb}}
\def\epsCG{\eps_\mathrm{cg}}
\def\epsC{\eps_\sC}
\def\epsHS{\eps_\mathrm{hs}}
\def\epsSG{\eps}
\def\epsBlock{\eps_\mathrm{block}}
\title{Metastability in the dilute Ising model}
\author{
Thierry Bodineau\\
{\small \'Ecole Normale Sup\'erieure}\\
{\small \tt bodineau@dma.ens.fr}\\
\and
Benjamin Graham\\
{\small University of Warwick}\\
{\small \tt b.graham@warwick.ac.uk}\\
\and
Marc Wouts\\
{\small Universit\'e Paris 13}\\
{\small \tt wouts@math.univ-paris13.fr}\\
}
\begin{document}
\maketitle

\begin{abstract}
Consider Glauber dynamics for the Ising model on the hypercubic lattice with a positive magnetic field. Starting from the minus configuration, the system initially settles into a metastable state with negative magnetization. Slowly the system relaxes to a stable state with positive magnetization. Schonmann and Shlosman showed that in the two dimensional case the relaxation time is a simple function of the energy required to create a critical Wulff droplet.

The dilute Ising model is obtained from the regular Ising model by deleting a fraction of the edges of the underlying graph. In this paper we show that even an arbitrarily small dilution can dramatically reduce the relaxation time.
This is because of a catalyst effect---rare regions of high dilution speed up the transition from minus phase to plus phase.

\end{abstract}

\tableofcontents

\section{Introduction}
\subsection{Metastability in the Ising model}

Consider Glauber dynamics for the supercritical Ising model on the hypercubic lattice ($d\ge 2$) started in the minus configuration but with a positive external magnetic field $h$. Aizenman and Lebowitz predicted that the model initially settles in a metastable minus phase, eventually relaxing to the plus phase on a time scale that grows exponentially with $1/h^{d-1}$ \cite{AizenmanLebowitzMetastabilityBootstrap}.

To be more precise, let $\b$ denote the inverse-temperature and let $\bc$ denote the critical inverse-temperature. Suppose $\b>\bc$. Let $\mu^+,\mu^-$ denote the plus and minus phases of the equilibrium Ising model. Start the Glauber dynamics at time $0$ with all vertices initially taking minus spin. Let $\s^{0,-}_{t}$ denote the state of the Glauber dynamics at time $t$. With $\b$ fixed, let $h \to 0$ with $t=\exp(\l/h^{d-1})$. A heuristic argument suggests that if $\l_1$ is sufficiently small and $\l_2$ is sufficiently large then for every local observable $f$:
\begin{romlist}
\item $\EE[f(\s^{0,-}_{t})]\to \mu^-(f)$ if $\l < \l_1$.
\item $\EE[f(\s^{0,-}_{t})]\to \mu^+(f)$ if $\l > \l_2$.
\end{romlist}
Part (i) is a lower bound on the relaxation time and part (ii) is an upper bound.  Schonmann proved this behavior in dimensions $d\ge 2$ \cite{SchonmannSlowDropletRelaxation}.  However, his proof left open the question of whether or not $\l_1=\l_2$.  Schonmann and Shlosman settled this question in dimension two, proving that the above holds with $\l_1=\l_2$; the transition is sharp in a logarithmic sense \cite{schonmann-shlosman}.  Their proof refines the heuristic argument and shows that the critical value of $\l$ is a simple function of the surface tension of the Wulff shape. The proof takes advantage of specific features of the two dimensional Ising model such as duality. When considering disordered models, in two and higher dimensions, these simplifying features no longer exist. New arguments from the $L_1$-theory of phase coexistence have to be used instead.

The focus of this paper will be the dilute Ising model. For the purpose of comparison, we note that the proof of our main result (Theorem~\ref{theorem:main} below) implies that the upper bound of \cite{schonmann-shlosman} extends to higher dimensions.
We believe that our method of proof is valid for all $\b>\bc$ but we did not make verifying this a priority. To avoid certain technicalities we assume that $\b>\b_0$ and $\b\not\in\sN$ (see Section \ref{dilute-ising-model}).

Let $\mu^h$ denote the equilibrium, undiluted Ising measure with a magnetic field $h>0$. 
With reference to \eqref{A-def}, the cost of creating a critical droplet under $\mu^h$ is $\mathsf{E}^{2\pi}_\c/h^{d-1}$ and $\mathsf{E}^{2\pi}_\c=\O(\b)$. 
Define a {\em local observable} to be a function that only depends on the spins in the region $[-1/h,1/h]^d$.

\begin{theorem}\label{theorem:p1}
Consider the value
\[
\l_2=\frac{\mathsf{E}^{2\pi}_\c}{d+1}.
\]
Let $\l>\l_2$.  For any positive number $\thmconstant$ there is a constant $C>0$ such that for every local observable $f$ and any $h>0$,
\begin{align*}
  \left|
\EE \left(
   f\left(\s^{0,-}_{\exp(\l/h^{d-1})}\right) \right) - \mu^h (f)
  \right|
  \le C \|f\|_\oo \exp(-\thmconstant/h).
\end{align*}
\end{theorem}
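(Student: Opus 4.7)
The plan is to exploit the space-time Aizenman--Lebowitz nucleation picture underlying the exponent $\l_2=\mathsf{E}^{2\pi}_\c/(d+1)$. A supercritical plus droplet, once present, grows at a positive speed $v>0$, and therefore a nucleation at space-time point $(x,s)$ invades the origin by time $T$ provided $|x|\le v(T-s)$. The backward cone of admissible nucleations has space-time volume of order $T^{d+1}$, while a single local nucleation event costs $\exp(-\mathsf{E}^{2\pi}_\c(1+\o(1))/h^{d-1})$. Choosing $T=\exp(\l/h^{d-1})$ with $\l>\l_2$ gives $T^{d+1}\exp(-\mathsf{E}^{2\pi}_\c/h^{d-1})\to\infty$ stretched-exponentially fast in $1/h^{d-1}$, so a useful nucleation is overwhelmingly likely.

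\textbf{Implementation.} Fix mesoscopic scales: spatial cells $B$ of side a large multiple of $1/h$ (big enough to contain a critical Wulff shape but with volume only polynomial in $1/h$), and time windows $I$ of length polynomial in $1/h$. I would establish two ingredients. \emph{(A) Dynamical nucleation.} Started from the minus configuration, Glauber dynamics restricted to $B$ produces a supercritical plus droplet within time $|I|$ with probability at least $\exp(-\mathsf{E}^{2\pi}_\c(1+\o(1))/h^{d-1})$. This is obtained by a monotone coupling / relative-entropy estimate comparing the short-time law of the dynamics to the equilibrium restriction $\mu^h|_B$, combined with the equilibrium Wulff lower bound from the $L_1$-theory of phase coexistence (which is dimension-independent, unlike the two-dimensional Peierls contour counts used by Schonmann--Shlosman). \emph{(B) Deterministic-speed growth.} Once a supercritical plus droplet is present, its boundary advances at speed at least $v>0$ with super-polynomially small failure probability, by attractivity of Glauber dynamics and standard droplet-growth estimates in the regime $\b>\bc$, $h>0$. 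With (A) and (B), tile the admissible space-time cone by roughly $T^{d+1}/(|B|\cdot|I|)$ disjoint blocks $B\times I$, apply (A) independently in each, and note that the ratio of tile count to inverse nucleation probability is stretched-exponentially large in $1/h^{d-1}$; a union bound gives at least one successful nucleation except on an event of doubly-exponentially small probability. Lemma (B) then ensures that the plus phase invades the window $[-1/h,1/h]^d$ well before time $T$, and a final polynomial-time local relaxation inside the window brings the law of $\s^{0,-}_T|_{[-1/h,1/h]^d}$ within $\exp(-\thmconstant/h)$ of $\mu^h$ in total variation, absorbing the $\exp(-\thmconstant/h)$ error in the theorem.

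\textbf{Main obstacle.} The hard step is lemma (A). In $d=2$ Schonmann--Shlosman converted dynamic nucleation into a static Wulff problem using self-duality and planar Peierls contours; in general dimension one must instead combine (i) a block coarse-graining of phase labels to identify mesoscopic regions of plus-like versus minus-like density, (ii) surface tension lower bounds from the $L_1$ phase-coexistence theory, (iii) a Holley-type monotone coupling to transfer equilibrium estimates to the short-time dynamics, and (iv) an energy/entropy balance showing that the dominant contribution to nucleation comes from near-critical Wulff shapes rather than exotic contour configurations. Most of the technical apparatus developed elsewhere in the paper (coarse-graining and surface-tension machinery, together with the restriction $\b>\b_0$, $\b\not\in\sN$) is presumably aimed at this estimate, which must ultimately also remain robust under a small dilution of the edges in the proof of the main theorem.
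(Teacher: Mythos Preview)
Your high-level strategy (backward space-time cone, nucleation plus growth, balancing the exponent $\mathsf{E}^{2\pi}_\c$ against $(d+1)\l$) is exactly the heuristic the paper follows, and your identification of step~(A) as the crux is correct. However, the specific implementation you propose has two genuine gaps that the paper resolves differently.

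First, neither polynomial time windows for (A) nor constant-speed growth for (B) can be obtained from the tools you name. A ``relative-entropy / monotone coupling'' comparison to equilibrium after only polynomially many steps does not give a lower bound of order $\exp(-\mathsf{E}^{2\pi}_\c/h^{d-1})$ on the nucleation probability: the Glauber dynamics in a box of side $\O(1/h)$ with minus boundary conditions has spectral gap only $\exp(-\eps/h^{d-1})$, so equilibration requires time $\exp(\d/h^{d-1})$, not $\mathrm{poly}(1/h)$. Likewise, ``standard droplet-growth estimates'' at positive speed $v>0$ are not available in $d\ge 3$ in the metastable regime (small $h$, minus environment); one cannot simply invoke attractivity. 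The paper never proves either of these.

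What the paper actually does is replace both (A) and (B) by spectral-gap statements. For the growth step it takes Wulff-shaped regions $\WW_{2\pi}(b)$ and proves, via a block dynamics with blocks that are thin Wulff annuli, that
\[
\mathrm{gap}(\WW_{2\pi}'(b),+,h)\ge \exp(-\eps/h^{d-1}),\qquad
\mathrm{gap}(\WW_{2\pi}'(b_1,b),(+,-),h)\ge \exp(-\eps/h^{d-1})
\]
for $b_1>\Bcriticall$. These gap bounds rest on the heavy machinery you allude to: an equilibrium Hausdorff-stability estimate showing the minus random-cluster boundary under $(+,-)$ boundary conditions stays in an $\eps$-shell, followed by a spatial-mixing statement translating this into closeness of one-point marginals. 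The droplet is then grown one vertex at a time in an ``inverted space-time pyramid'', each step lasting $\exp(\d/h^{d-1})$; the growth speed is therefore $\exp(-\d/h^{d-1})$, not $v>0$. Because $\d>0$ is arbitrary, this only shifts the exponent by $\d$, which is absorbed in the strict inequality $\l>\l_2$. The nucleation step (A) is handled the same way: one first proves $\mathrm{gap}(\WW_{2\pi}(b),-,h)\ge\exp(-\eps/h^{d-1})$ for $b$ just above $\Bcriticall$, waits time $\exp(\d/h^{d-1})$ to equilibrate, and then reads off the nucleation probability from the equilibrium $L_1$ lower bound. So your (iii) and (iv) under ``main obstacle'' are not combined with a Holley coupling; they feed instead into a spectral-gap computation, and that is the missing idea in your proposal.
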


This is an improvement on the upper bound in \cite{SchonmannSlowDropletRelaxation} and corresponds to the upper bound predicted by the heuristic of \cite{schonmann-shlosman}.
Proving rigorously the lower bound suggested by \cite{schonmann-shlosman} in dimensions three and higher requires the development of new arguments which we postpone to a future work.

The dilute Ising model is a variant of the Ising model that is obtained by randomizing the Ising model edge coupling strengths. The impact of dilution on the relaxation of Glauber dynamics has been studied in \cite{CMM1, Ma}.
In the Griffiths phase, which corresponds to the sub-critical regime, the disorder is proven to lead to a slowdown of the dynamics.  In the phase transition regime, the metastability has been investigated for the random field Curie-Weiss model \cite{BianchiBovierIoffe}.

We will consider the Ising model on $\ZZ^d$ diluted in the simplest way possible.  Independently, delete each edge with probability $1-p$.  When $p$ is sufficiently large, the remaining edges form a supercritical percolation cluster.  From this point of view, the Ising model is a special case of the dilute Ising model corresponding to $p=1$.
It is natural to ask how the relaxation time depends on $p$.  In this paper we show that even a small dilution can greatly reduce the relaxation time.

\subsection{The dilute Ising model}\label{dilute-ising-model}

Let $\ZZ^d$ represent the hypercubic lattice.  The Ising model assigns each site of $\ZZ^d$ a spin of $\pm1$. Let $\Si=\{\pm1\}^{\ZZ^d}$ denote the set of Ising configurations.

Let $E=\{\{x,y\}:\|x-y\|_1=1\}$ denote the set of nearest neighbor edges of $\ZZ^d$. The equilibrium Ising measure with local coupling strengths $J=(J(e):e\in E)$ and external magnetic field $h$ is defined using the formal Hamiltonian
\begin{align}\label{formal hamiltonian}
-\frac12 \sum_{e = \{x,y\} \in E} J(e) \s(x) \s(y) - \frac12 \sum_{x\in\ZZ^d} h \s(x) ,\qq \s \in \Si.
\end{align}
We will consider local coupling strengths with the Bernoulli distribution.  Let $\QQ$ denote the product measure such that for each edge $e$, $\QQ(J(e)=1)=p$ and $\QQ(J(e)=0)=1-p$.

It is well known that when $h\not=0$, the Ising measure $\mu^{J,h}$ is well defined by the Gibbs formalism for any inverse-temperature $\b> 0$ and local coupling strengths $J\ge0$.  Consider the spontaneous magnetization of the Ising measure,
\begin{align}\label{m star}
m^* = \lim_{h\to 0+} \QQ \left[\mu^{J,h} \left( \s(0) \right)\right].
\end{align}
When $m^* > 0$ there is said to be phase coexistence.  For such $\b$ there are two different Gibbs measures at $h=0$, corresponding to the limits $h\to 0+$ and $h\to 0-$.

It is shown in \cite{ChayesChayesFrohlich} that if the $J$-positive edges percolate then there is phase coexistence in the dilute Ising model at low temperatures.  In our settings, this means that the critical inverse-temperature
\begin{align*}
\bc = \inf \left\{ \b > 0 : m^* > 0 \right\}
\end{align*}
is finite if and only if $p > \pc$, where $\pc$ is the threshold for bond percolation on $(\mathbb{Z}^d,E)$.

As well as defining the equilibrium Ising model, the formal Hamiltonian defines a dynamic model.  Let $(\s^{0,-}_t)_{t\ge 0}$ denote a Markov chain on the set $\Si$ of Ising configurations, starting at time $0$ with minus spins everywhere, and evolving with time according to Glauber dynamics.  Given a set of coupling strengths $J$, let $\EE_J$ denote expectation with respect to the Glauber dynamics.  Our results extend to some other dynamics such as the Metropolis dynamics (see Section~\ref{sec:heat bath}).

A quantity denoted $\cdil$ is defined in Section~\ref{sec:big-proof} that satisfies $\cdil=\O(\log\frac{1}{1-p})$ as $p\to 1$.  For the rest of the paper consider $p$ to be fixed in the range $(\pc,1)$.

Let $\b_0$ denote the minimum value such that for all $\b>\b_0$ the assumptions of slab percolation (see Section~\ref{sec:cg}) and spatial mixing (see Section~\ref{sec:mixing}) hold.  Let $\sN\subset(0,\oo)$ denote the set of zero measure defined by \eqref{sN}.  For the rest of the paper the inverse-temperature $\b$ should be assumed to be greater than $\b_0$ and not in $\sN$.

For $\th\in(0,\pi)$ let $\mathsf{E}^\th_\c$ denote the cost, up to a factor of $h^{d-1}$, of creating a critical plus droplet in a cone with angle $\th$. $\mathsf{E}^\th_\c=\O(\b\th^{d-1})$ and is defined in Section~\ref{sec:critical droplets}.

\begin{theorem}\label{theorem:main}
For $\th\in(0,\pi)$ consider the value
\[
\l_2^\th=\frac{\mathsf{E}^\th_\c + \cdil \th^{-1}}{d+1}.
\]
\begin{romlist}
\item
Let $\l>\l_2^\th$.  For any positive number $\thmconstant$, there are constants $C,c>0$ such that for any $h>0$, for every local observable $f$,
\begin{align*}
\QQ \left[\, \left| \EE_J \left( f\left(\s^{0,-}_{\exp(\l/h^{d-1})}\right) \right) - \mu^{J,h} (f) \right| \le C \|f\|_\oo \exp(-\thmconstant/h) \, \right]& \\ \ge 1-C\exp(-c /\sqrt{h})&.
\end{align*}
\item
However close $p$ is to one, at low temperatures the diluted Ising model relaxes much more quickly than the corresponding undiluted Ising model; with reference to Theorem~\ref{theorem:p1}, as $\b\to\oo$,
\[
\frac1{\l_2} \inf_{\th\in(0,\pi)} \l_2^\th \to 0.
\]
\end{romlist}
\end{theorem}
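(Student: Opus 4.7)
\medskip\noindent\textbf{Proof plan for Theorem~\ref{theorem:main}.}

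The core mechanism in part (i) is a catalyst effect: at low temperatures, a conic region of aperture $\th$ whose edges have all been deleted (a ``vacuum cone'') allows a plus droplet to nucleate at the reduced free-energy cost $\mathsf{E}^\th_\c/h^{d-1}$, because the lateral surface tension is switched off inside the cone. The $\QQ$-probability of encountering such a vacuum cone of linear size $\sim 1/h$ at a given location is of order $\exp(-\cdil/(\th h^{d-1}))$, since the number of deleted edges scales like $1/(\th h^{d-1})$ and $\cdil = \O(\log(1/(1-p)))$ is the entropic cost per deleted edge. Multiplying these exponentials gives the total cost $\exp(-(\mathsf{E}^\th_\c + \cdil/\th)/h^{d-1})$ for producing a supercritical droplet at a given site via a catalyst, which is the exponential appearing in $\l_2^\th$ before the division by $d+1$.

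The proof of (i) proceeds in four steps. First, one tiles the neighborhood of the origin into boxes of intermediate scale and uses independence of $J$ under $\QQ$, together with a concentration argument, to show that with $\QQ$-probability $\ge 1 - C\exp(-c/\sqrt{h})$ the dilute environment contains ``enough'' candidate catalyst locations near the origin. Second, inside each catalyst one uses the $L^1$-theory of phase coexistence adapted to the dilute Ising model, combined with coarse graining and the slab percolation hypothesis (Section~\ref{sec:cg}), to upper-bound the conditional nucleation time of a supercritical plus droplet by $\exp(\mathsf{E}^\th_\c/h^{d-1})$. Third, a Schonmann--Shlosman-style balance between spatial entropy and waiting time: in a space-time box of volume $\exp(d\l/h^{d-1})$ and duration $\exp(\l/h^{d-1})$ the expected number of successful catalyst nucleations is of order $\exp((d+1)\l/h^{d-1} - (\mathsf{E}^\th_\c + \cdil/\th)/h^{d-1})$, which exceeds one precisely when $\l > \l_2^\th$; once a supercritical droplet is born, its ballistic growth invades the observation region in negligible extra time. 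Fourth, finite-volume spatial mixing (Section~\ref{sec:mixing}) transfers the plus-like bulk configuration to $\mu^{J,h}(f)$ with residual error $\exp(-\thmconstant/h)$ on any local observable.

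The hardest ingredient is the cone-shaped critical droplet estimate (second step). One must establish a quenched surface-tension bound in which the lateral cone boundary costs essentially nothing while the cap of the droplet pays the full dilute surface tension, and one must do so with a renormalization scheme robust against the random couplings. The exceptional set $\sN$ of temperatures appears at this stage, via the regularity of the Wulff shape at the angles involved. The first step is classical in nature but yields the $\exp(-c/\sqrt{h})$ rate by choosing the tiling scale to balance catalyst density against finite-box exceedances. Part (ii) is then a one-line optimization. Using that $\mathsf{E}^{2\pi}_\c$ grows linearly in $\b$ at low temperature, that $\mathsf{E}^\th_\c = \O(\b\th^{d-1})$, and that $\cdil$ depends only on $p$, one obtains for some constants $C, C' > 0$,
\[
\frac{\l_2^\th}{\l_2} \;\le\; C\th^{d-1} + \frac{C'\cdil}{\b\th}.
\]
The choice $\th_* = (\cdil/\b)^{1/d}$ (which lies in $(0,\pi)$ for all $\b$ large) yields $\inf_{\th\in(0,\pi)} \l_2^\th/\l_2 \le C''(\cdil/\b)^{(d-1)/d}$, which tends to zero as $\b\to\oo$.
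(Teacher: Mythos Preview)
Your high-level architecture matches the paper's, and your argument for part~(ii) is correct and in fact more explicit than what the paper writes (the paper simply cites Proposition~\ref{catalyst size}). However, there are two genuine problems with the proposal for part~(i).

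\textbf{The catalyst is not a vacuum cone.} Your phrase ``conic region of aperture $\th$ whose edges have all been deleted'' is misleading: if all edges inside the cone were deleted, nothing could nucleate there. What the paper conditions on (the measure $\QQ_\th$, equation~\eqref{Q-th-def}) is that only the edges crossing the \emph{lateral boundary} of the truncated cone $\WW_\th(\Bmax)$ are deleted; the interior retains typical dilution. Your subsequent sentence about lateral surface tension suggests you understand this, and your edge count $\sim \th^{-1}h^{1-d}$ is correct for the boundary, but the description should be fixed. Relatedly, your claim that the exceptional set $\sN$ enters ``via the regularity of the Wulff shape at the angles involved'' is wrong: $\sN$ is defined in~\eqref{sN} as the set of $\b$ where the free and wired infinite-volume random-cluster measures disagree, and it is needed for the coarse-graining input (Proposition~\ref{prop:coarse graining}), not for any Wulff regularity.

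\textbf{The main gap: droplet growth is not a throwaway line.} You write ``once a supercritical droplet is born, its ballistic growth invades the observation region in negligible extra time.'' In the paper this is the bulk of the technical work, occupying Sections~\ref{sec:stab}--\ref{sec:mix} and~\ref{sec:big-proof}. The difficulty is that the droplet is born inside a region with artificially diluted lateral boundary, and one must show that it (a)~grows to fill the cone $\WW_\th(\Bmax)$ (Proposition~\ref{prop322}, via inverted space-time pyramids), (b)~escapes the catalyst into a region of typical dilution (Proposition~\ref{prop322'}, via space-time parallelepipeds), and then (c)~spreads through the rescaled lattice by a directed-percolation argument (Proposition~\ref{prop321} and the Peierls argument in the proof of Theorem~\ref{theorem:main}). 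Each of these steps rests on spectral-gap bounds for the dynamics in Wulff-shaped annuli under mixed boundary conditions (Propositions~\ref{sg1}--\ref{sg2}), which in turn require the Hausdorff stability of random-cluster interfaces (Propositions~\ref{decay-bc}--\ref{decay-bc2}) and the spatial-mixing Proposition~\ref{mixing prop}; this last result is precisely where the restriction $\b>\b_0$ comes from. None of this is ``classical'' for the dilute model, and your outline does not account for it.
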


We expect, based on the undiluted Ising model \cite{BodineauIsing}, that the slab percolation threshold is equal to $\bc$. Further study of slab percolation and spatial-mixing properties for the dilute Ising model would likely extend the domain of validity of Theorem~\ref{theorem:main} down to the critical point.

\subsection{Heuristic}\label{sec:heuristic}

The metastability phenomenon for the undiluted Ising model \cite{schonmann-shlosman} is related to the rate of nucleation of plus droplets with linear size order $1/h$.  Consider a small neighborhood of the origin.  Initially all the spins are minuses.  Small clusters of plus spins quickly form and then disappear.  After a short time the system looks like it has reached equilibrium with minus spins in the majority.  However, if we look at a much larger region we will be in for a surprise.  A small number of larger droplets of plus spin will have formed and started to spread.  They will eventually merge and cover the whole region, leaving the majority of spins in the plus state.

The rate at which droplets of plus phase form, and what happens to the droplets once they have formed, depends on their energy.  Let $\sV\subset\RR^d$ with unit volume.  For $b>0$ let $\mathsf{E}^\sV(b)$ denote, up to a factor of $h^{d-1}$, the energy of a plus droplet with the shape $(b/h)\sV$.  $\mathsf{E}^\sV(b)$ can be estimated as a balance between the surface tension at the phase boundary and the effect of the magnetic field $h$,
\begin{equation}
\label{eq: energy}
\mathsf{E}^\sV(b)/h^{d-1} = (b/h)^{d-1} \sF(\sV) - h m^* (b/h)^d.
\end{equation}
Here $\sF(\sV)$ is the surface tension of $\sV$ (see Section~\ref{sec:st}) and $m^*$ is the mean magnetization in the plus phase \eqref{m star}.

Let $B_\c^\sV = \frac{(d-1) \sF(\sV)}{d m^*}$.  The energy function $\mathsf{E}^\sV(b)$ is increasing on the interval $(0,B_\c^\sV)$ and decreasing beyond $B_\c^\sV$.  Droplet with $b<B_\c^\sV$ are unstable and tend to be eroded by the surrounding minus spins.  Droplets with $b>B_\c^\sV$ are expected to spread.  The nucleation of a droplet with $b>B_\c^\sV$ requires that the system overcomes an energy barrier $\mathsf{E}^\sV_\c/h^{d-1}$, where $\mathsf{E}^\sV_\c := \mathsf{E}^\sV(B_\c^\sV)$.  Given the inverse-temperature $\b$ there is a unique shape $\sW$ known as the Wulff shape with minimal surface tension; see the definition of $\sW_{2\pi}$ in Section~\ref{sec:wulff and summertop shapes}. Setting $\sV=\sW$ minimizes $\mathsf{E}^\sV_\c$. The critical droplet shape is $(B_\c^\sW/h) \sW$.

In any small neighborhood the rate at which copies of the critical droplet form is approximately $\exp(-\mathsf{E}^\sW_\c/h^{d-1})$.  Droplets larger than the critical droplet spread out with roughly uniform speed and eventually invade the whole space.  The space-time cone of points from which one can reach the origin by time $t$ (when growing at a fixed speed) has size $\O(t^{d+1})$.  If $t=\exp(\l/h^{d-1})$ with
\begin{align}\label{eq:lambda_c}
\l > \l_c = \frac{\mathsf{E}^\sW_\c}{d+1}
\end{align}
then we should expect to see a critical droplet form, and then spread to cover the origin, by time $t$.  This heuristic picture has been turned into a rigorous proof for the two dimensional Ising model \cite{schonmann-shlosman}.

The dilute Ising model is self averaging so the quenched magnetization and the quenched surface tension can unambiguously be defined almost surely with respect to the dilution measure $\QQ$.
It is tempting to try to adapt the previous heuristic to the case of the dilute Ising model using the quenched surface tension in \eqref{eq: energy} to describe the typical cost of phase coexistence.
However, we must be careful. In much simpler models, such as random walk in random environment, it is well known that a small amount of randomness can change the asymptotic behavior.

Dilution seems to be capable of slowing down the dynamics.  Consider an expanding droplet of plus phase. If it encounters an area of high dilution it may get blocked and have to seep around the obstruction, slowing down its progress.

However, dilution can also speed up the dynamics. The limiting factor in the undiluted Ising model is the rate at which plus droplets nucleate. Nucleation of plus droplets is infrequent due to the high cost of phase coexistence on their boundaries. The dilution creates atypical regions, which we will call {\em catalysts}, where the surface tension is unusually low and so the rate of nucleation is unusually high.

The natural human response to catalysts is to try and classify them. Some catalyst do not seem to have much effect on the relaxation time. Consider (when $d=2$) a circle where all the edges crossing its perimeter have been diluted. If a plus droplet forms inside the circle, there is no way for it to spread outwards.

We therefore want to focus on catalysts that create a sheltered region to help plus droplets nucleate, but are not so closed off they prevent plus droplets from escaping. There seem to be two competing factors. Large catalysts will be relatively rare and so the droplets they help to nucleate will take a long time to reach the origin. Conversely, small catalysts cannot do a great deal to increase the rate at which critical droplets nucleate.

We conjecture that there is an optimal catalyst shape that determines the relaxation time of the system. However, we do not know how to calculate the optimal shape.  In this paper we look at a restricted class of catalysts: surfaces of diluted edges that form open-bottom cones. We control the nucleation rate in the cones, and the subsequent growth of the droplet to regions of more typical dilution. This approach leads to an upper bound on the relaxation time that is much smaller than the time predicted by the formula \eqref{eq:lambda_c} with quenched surface tension. Indeed, part (ii) of Theorem~\ref{theorem:main} shows that asymptotically in $\b$ the values of $\l_2$ differ greatly.

We do not address the issue of the lower bound for the metastable time for disordered models. We believe that the more important point is to show the existence of the catalyst effect of the disorder.

\subsection{Outline of the paper}
In Section~\ref{sec:prop} we define the dilute Ising model and recall some of its basic features. The random-cluster representation is used to state a coarse graining property.

In Sections~\ref{sec:L1} and \ref{sec:wulff} we look at the Ising model without a magnetic field.
In Section~\ref{sec:L1} we describe the $L_1$-theory of phase coexistence. The theory can describe both the typical cost of phase coexistence and the cost of phase coexistence in the neighborhood of catalysts. To combine the two cases we consider the cone $\sA_\th:=\{\bx\in\RR^d: x_1 \ge \|\bx\|_2 \cos (\th/2)\}$ where either $\th\in(0,\pi)$ or $\th=2\pi$.
In Section~\ref{sec:wulff} we look at generalizations of the Wulff shape to $\sA_\th$. The Wulff shape is the shape with minimal surface tension given its volume. The Wulff shape can be used to quantify the large deviations of the equilibrium Ising model.

In Section~\ref{sec:technical} we reintroduce the magnetic field. We justify the energy function featured in the heuristic. We prove regularity results concerning cluster boundaries. The motivation for this is to study the spectral gap of the dilute Ising model in finite regions with various boundary conditions.

Finally in Section~\ref{sec:big-proof} we use the accumulated results to prove Theorem~\ref{theorem:main}. We do this by proving that the cone shaped regions act as catalysts. To show that the clusters of plus phase formed in the catalysts grow we consider another type of cone: space-time cones that are Wulff shaped spatially and growing in size with time.

\subsection{Notation}

Throughout the paper $C,c,c_\mathrm{stb},c_\mathrm{hs}$, etc, will be used to refer to positive numbers that may depend on $p,\b$ and $\th$ but not on $h$.  We will recycle $C$ and $c$ to refer to various less important positive constants; the values they represent will change from appearance to appearance.

Let $\sS^{d-1}$ denote the set of unit vectors in $\RR^d$. Let $\be_1,\dots,\be_d$ denote the canonical basis vectors. We will use bold to differentiate continuous variables $\bx\in\RR^d$ from lattice points $x\in\ZZ^d$.

Consider $\sA,\sB \subset \RR^d$, $\bx\in\RR^d$ and $c\ge0$.  Let $\sA+\sB=\{\ba+\bb:\bb\in\sA,\bb\in\sB\}$ denote the sum of the two sets.  Let $\sA+\bx$ denote the translation of $\sA$ by $\bx$.  Let $c\sA$ denote $\{c\,\ba:\ba\in \sA\}$, the set $\sA$ scaled by a factor of $c$.

\section{Properties of the dilute Ising model}\label{sec:prop}

\subsection{Definition of $\mu^{J,\zeta,h}_\L$}

Let $J=(J(e):e\in E)$ be a given realization of the coupling strengths.  We will now define formally the Ising measure $\mu^{J,\zeta,h}_\L$ with a magnetic field $h\in\RR$ and boundary conditions $\zeta\in\Si$ on a finite domain $\L\subset\ZZ^d$ at inverse-temperature $\b> 0$.

Define the {\em external vertex boundary} $\pd \L$ of $\L$:
\begin{align*}
\pd\L&=\pd^+\L\cup \pd^-\L \qq\text{where}\\ \pd^\pm \L&= \{x\not\in\L: \exists y\in\L,\
\{x,y\}\in E \text{ and }
\zeta(x)=\pm1\}.
\end{align*}
Taking w to stand for {\em wired}, define edge sets for $\L$:
\begin{eqnarray*}
E (\L) & = & \left\{ \left\{ x, y \right\}\in E : x, y \in \L\right\},\\ E^\pm(\L) & = & \left\{ \left\{ x, y \right\}\in E : x \in \L \text{ and } y\in\pd^\pm \L\right\},\\ E^\w (\L) & = & E (\L) \cup E^\pm(\L).
\end{eqnarray*}
The set of spin configurations compatible with $\zeta$ outside $\L$ is
\[
\Si_\L^\zeta:=\{\s\in\Si:\forall x \not\in \L,\ \s(x)=\zeta(x)\}  .
\]
Changing a Hamiltonian by an additive constant does not change the resulting measure; with reference to \eqref{formal hamiltonian} the Ising Hamiltonian $H^{J,\zeta,h}_\L:\Si_\L^\zeta\to\RR$ can be defined by
\begin{align*}
H^{J,\zeta,h}_\L (\s) = \sum_{e =\{x, y\} \in E^\w (\L)} J(e) 1_{\{\s(x)\not=\s(y)\}}+\sum_{x\in \L} h 1_{\{\s(x)=-1\}}.
\end{align*}
The dilute Ising measure $\mu^{J,\zeta,h}_{\L}$ at inverse-temperature $\b$ is defined by
\begin{align*}
  \mu^{J,\zeta,h}_{\L} (\{\s\}) = \frac{1}{Z^{J,\zeta,h}_\L} \exp \left( - \b H_{\L}^{J,\zeta,h} (\s) \right)
\end{align*}
where $Z^{J,\zeta,h}_\L$ is a normalizing constant, the partition function, defined by
\begin{align}\label{partition function}
Z^{J,\zeta,h}_\L = \sum_{\s \in \Si^\zeta_\L} \exp \left( - \b H_{\L}^{J,\zeta,h} (\s) \right).
\end{align}
We have used $\s$ above to index summations over $\Si^\zeta_\L$.  It has also been used as a random variable---the mean spin at the origin is written $\mu^{J,\zeta,h}_\L(\s(0))$.  Furthermore, given a set $V\subset \ZZ^d$ we will write $\s(V)$ to denote the average spin in $V$,
\begin{align}\label{average spin}
\s(V)=\frac{1}{|V|}\sum_{x\in V} \s(x)\in[-1,+1].
\end{align}

\subsection{The random-cluster representation for $\mu^{J,\zeta,h}_\L$}

The spin-spin correlations in the Ising model can be described by the $q=2$ case of the random-cluster model \cite{G-RC}. We have to be extra careful because of the general boundary conditions $\zeta\in\Si$, dilute coupling strengths $(J(e))$, and the magnetic field $h\ge 0$. In this section we will describe a random-cluster representation $\phi^{J,\zeta,h}_\L$ for the Ising model $\mu^{J,\zeta,h}_\L$ and a joint measure $\varphi^{J,\zeta,h}_\L$.

The Ising measure $\mu^{J,\zeta,h}_\L$ was defined using the graph $(\L,E^\w(\L))$. Add to this graph a ghost vertex $\g$ through which the magnetic field will act, and a set of ghost edges $E^\g(\L)=\{\{\g,x\},x\in\L\}$.

When defining the random-cluster model on a given graph, for each edge $e$ there is an interaction-strength parameter $p_e\in[0,1]$. There is also another parameter, $q>0$, that influences the number of clusters that are formed. In order to describe the correlations of the dilute Ising model we will fix
\begin{align*}
q=2 \qtext{and} p_e=\begin{cases}
1-\exp(-\b J(e)),& e\in E^\w(\L),\\1-\exp(-\b h), & e\in E^\g(\L).
\end{cases}
\end{align*}
The state space of $\phi^{J,\zeta,h}_\L$ is $\Om_\L=\{0,1\}^{E^\w (\L)\cup E^\g(\L)}$. With $\om\in\Om_\L$, an edge $e$ is {\em open} if $\om(e)=1$ and {\em closed} if $\om(e)=0$.  Two vertices of $\L$ are {\em connected} if they are joined by paths of open edges either
\begin{romlist}
\item to each other, or
\item both to $\pd^+\L\cup\{\g\}$, or
\item both to $\pd^-\L$.
\end{romlist}
A cluster is a maximal collection of connected vertices.
Let $V_+$, $V_-$ denote the clusters connected to $\pd^+\L\cup\{\g\}$, $\pd^-\L$, respectively.
Let $n=n(\L,\om)$ count the number of other clusters in $\L$.
Label these clusters $V_1,\dots,V_n$.

We will define the random-cluster probability measure $\phi^{J,\zeta,h}_\L$ using a coupling probability measure $\varphi^{J,\zeta,h}_\L$ defined on $\Si^\zeta_\L\times\Om_\L$.  The marginal distribution of $\varphi^{J,\zeta,h}_\L$ on $\Si^\zeta_\L$ will be the Ising measure $\mu^{J,\zeta,h}_\L$.  The marginal distribution of $\varphi^{J,\zeta,h}_\L$ on $\Om_\L$ defines the random-cluster measure $\phi^{J,\zeta,h}_\L$.

With reference to \cite[Section 1.4]{G-RC} define the coupled probability measure $\varphi^{J,\zeta,h}_\L$ as follows.  Let $(\s,\om)\in\Si^\zeta_\L\times\Om_\L$. Recall the notation \eqref{average spin} for average spins. Note that $\s(V)=\pm1$ if and only if $\s(x)=\s(y)$ for all $x,y\in V$. We will say that $\s$ is an $\om$-admissible configuration if
\[
\s(V_+)=+1, \q \s(V_-)=-1\q \text{and} \q \s(V_i)=\pm1,\ i=1,\dots,n.
\]
If $\s$ is $\om$-admissible, with reference to \eqref{partition function} let
\begin{align*}
\varphi^{J,\zeta,h}_\L(\{(\s,\om)\})&= \frac{1}{Z^{J,\zeta,h}_\L} \prod_e p_e^{\om(e)} (1 - p_e)^{1 - \om(e)},
\end{align*}
otherwise let $\varphi^{J,\zeta,h}_\L(\{(\s,\om)\})=0$.

For configurations $\om\in\Om_\L$ under which $\pd^+\L$ is connected to $\pd^-\L$, there are no $\om$-admissible configurations.  Let $\disconnected_\L^\zeta\subset\Om_\L$ represent the set of configurations such that the vertices in $\pd^+\L$ are {\em not} connected to the vertices of $\pd^-\L$; $\disconnected^\zeta_\L$ is the support of $\phi^{J,\zeta,h}_\L$.  For $\om\in\disconnected^\zeta_\L$, there are $2^{n(\L,\om)}$ $\om$-admissible configurations and
\begin{align*}
\phi^{J,\zeta,h}_\L \left( \{\omega\} \right)=\frac{2^{n(\L,\om)}}{Z^{J,\zeta,h}_\L} \prod_e p_e^{\om(e)} (1 - p_e)^{1 - \om(e)}.
\end{align*}
The coupling $\varphi^{J,\zeta,h}_\L$ has a probabilistic interpretation. To sample an Ising configuration $\s\sim\mu^{J,\zeta,h}_\L$ given a sample $\om\sim \phi^{J,\zeta,h}_\L$, set $\s(V_+)=+1$, set $\s(V_-)=-1$, and independently for $i=1,\dots,n$ set
\begin{align*}
\s(V_i)=
\begin{cases}
+1 & \text{with probability } 1/2,\\ -1 & \text{otherwise}.\\
\end{cases}
\end{align*}
It is sometimes easier to ignore the ghost edges. Let $r(\om)$ denote the edge configuration obtained by closing all the ghost edges. We will say that $V\subset\L$ is a {\em real} cluster if $V$ is an $r(\om)$-cluster. If $V$ is a real cluster under $\phi^{J,\zeta,h}_\L$, but not connected to $\pd^\pm\L$, then
\begin{align}\label{weights}
\s(V)=
\begin{cases}
+1 & \text{with probability } e^{\b h |V|}/(1+e^{\b h |V|}),\\ -1 & \text{otherwise}.\\
\end{cases}
\end{align}
Let $x\lra y$ denote the event that $x$ and $y$ are in the same real cluster, and let $A\lra B$ denote the event that $a \lra b$ for some $a\in A$ and $b\in B$.
Note that when $h=0$, all the clusters are real clusters.

There are two special cases of the $h=0$ random-cluster model, wired and free boundary conditions.  Wired boundary conditions refers to either all-plus or all-minus boundary conditions.  Under wired boundary conditions $\disconnected^\zeta_\L = \Om_\L$.  The limit $\phi^{J,\w}=\lim_{\L\to\ZZ^d} \phi^{J,+,0}_\L$ is called the wired random-cluster measure on $\ZZ^d$.  Free boundary conditions refers to pretending that $\pd\L=\emptyset$ whilst defining $\phi^{J,\zeta,h}_\L$. The resulting measure $\phi^{J,\f,0}_\L$ depends only on $(J(e):e\in E(\L))$. The measure $\phi^{J,\f}$ obtained by taking the limit of $\phi^{J,\f,0}_\L$ as $\L\to\ZZ^d$ is called the free random-cluster measure on $\ZZ^d$.

Let $0\lra\oo$ denote the event that the origin is in an infinite real-cluster. The set
\begin{align}\label{sN}
\sN:=\left\{\b:\QQ \left[\mu^{J,\f}(0 \lra\oo)\right] < \QQ \left[\mu^{J,\w}(0\lra\oo)\right]\right\}
\end{align}
is at most countable \cite{wouts-coarse-graining}. It is conjectured that $\sN=\emptyset$.

\subsection{Stochastic orderings}

Given two measures $\mu_1,\mu_2$ on a set $\RR^\L$, we will write $\mu_1\lest\mu_2$ if there is a coupling $(\s_1,\s_2)$ on $\RR^\L\times\RR^\L$ such that
\begin{romlist}
\item $\s_1 \sim \mu_1$,
\item $\s_2 \sim \mu_2$, and
\item $\s\le\s'$ with probability one.
\end{romlist}
Holley's inequality \cite[Theorem 2.1]{G-RC} can be used to prove stochastic orderings for the ferromagnetic Ising model.  The Ising model on a fixed graph $\L$ is stochastically increasing with respect to the magnetic field and the boundary conditions: for $h_1\le h_2$ and any $\zeta_1\le \zeta_2$,
\[
\mu^{J,\zeta_1,h_1}_\L\lest \mu^{J,\zeta_2,h_2}_\L.
\]
The effect of expanding the region depends on the boundary conditions. With $\De\subset \L$,
\[
\mu^{J,+,h}_\De \gest \mu^{J,+,h}_\L \qtext{ but } \mu^{J,-,h}_\De \lest \mu^{J,-,h}_\L.
\]
Under plus boundary conditions, the random-cluster representation increases with $h\in[0,\oo)$ and $J$,
\[
\phi^{J_1,+,h_1}_\L\lest \phi^{J_2,+,h_2}_\L \qtext{if} h_2\ge h_1 \ge 0 \text{ and } \forall e,\ J_2(e)\ge J_1(e) \ge 0.
\]
Note that sending $J(e)$ to zero or infinity on the boundary allows us to compare free and wired boundary conditions.

\subsection{Coarse graining}\label{sec:cg}

Coarse graining is an important technique in the study of percolation and the random-cluster model.  The open edges of the random-cluster model percolate for $\b>\bc$.
Slab percolation is a stronger property than percolation \cite{wouts-coarse-graining}.  In three and higher dimensions, slab percolation refers to percolation in a {\em slab} $\ZZ^{d-1}\times\{1,\dots,n\}$.  In two dimensions it refers to the existence of spanning clusters in rectangles with arbitrarily high aspect ratios. The slab-percolation threshold is defined
\begin{align*}
\hbc=\inf \{ \b>0 : \text{ slab percolation occurs under } \QQ[\mu^{J,\f}]\} \ge \bc.
\end{align*}
It is conjectured that $\hbc=\bc$.

With $K$ a positive integer, let
\[
\BB_K = [-K/2, K/2)^d \cap \ZZ^d.
\]
For $i\in\ZZ^d$, let $\BB_K(i) := \BB_K + K i$ denote a copy of $\BB_K$ centered at $K i$.

Let $\L\subset\ZZ^d$. To allow unusually high dilution on the edge boundary of $\L$, let $J\sim\QQ$ and let $J'\in\{0,1\}^E$ denote a set of coupling strengths that agrees with $J$ in $E(\L)$. Let $\om\in\Om_\L$ denote an edge configuration for $\L$.

If $\BB_K(i)\subset \L$ and, looking at the restriction of $\om$ to $\BB_K(i)$, if there is a unique real-cluster $A\subset \BB_K(i)$ connecting the $2d$ faces of $\BB_K(i)$, let $\BB_K^\dagger(i)=A$; let $\BB_K^\ddagger(i)$ denote the real $\L$-cluster containing $\BB_K^\dagger(i)$. Otherwise, let $\BB_K^\dagger(i)=\BB_K^\ddagger(i)=\emptyset$.

Let $\epsCG>0$. Recall the definition \eqref{m star} of the spontaneous magnetization $m^*$.

\begin{definition}
A box $\BB_K(i) \subset \L$ is $\epsCG${\em-good} if:
\begin{romlist}
\item $\BB_K^\dagger(i)$ is connected (by paths of length one) to each $\BB_K^\dagger(j)$ such that $\BB_K(j)\subset\L$ and $\|i-j\|_1=1$.
\item The diameters of the real-clusters of $\L\sm(\cup_j \BB_K^\dagger(j))$ intersecting $\BB_K(i)$ are at most $K/2$.
\item $\BB_K^\ddagger(i)\cap \BB_K(i)$ contains between $K^dm^*(1-\epsCG)$ and $K^dm^*(1+\epsCG)$ vertices.
\end{romlist}
Otherwise $\BB_K(i)$ is $\epsCG${\em-bad}.
\end{definition}

Note that any box not entirely contained in $\L$ is automatically $\epsCG$-bad.
If a box is $1$-bad then it is also $\epsCG$-bad for all $\epsCG\in(0,1)$.
Recall that we have fixed $\b > \b_0\ge\hbc$.
The supremum below is over $J'\in\{0,1\}^E$ that agree with $J$ on $E(\L)$.
Combining \cite[Theorem~2.1 and Proposition~2.2]{wouts-coarse-graining} yields:

\begin{proposition}
\label{prop:coarse graining}
There are constants $\cg=\cg(\epsCG)>0$ and $K_0=K_0(\epsCG)\in\NN$ such that for $K\ge K_0$ and any $\BB_K(i_1),\dots,\BB_K(i_n)\subset\L$, with $\QQ$-probability $1-\exp(-\cg K n)$,
\begin{align*}
\sup_{J'} \varphi^{J',+,0}_\L (\BB_K(i_1),\dots,\BB_K(i_n) \text{ are $\epsCG$-bad}) \le \exp(-\cg K n).
\end{align*}
\end{proposition}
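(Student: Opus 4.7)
The plan is to reduce the statement to the two results cited from \cite{wouts-coarse-graining}, namely Theorem~2.1 (a single-box estimate under quenched disorder) and Proposition~2.2 (a joint estimate over $n$ boxes). Two features specific to the present formulation need to be absorbed: the supremum over boundary couplings $J'$ agreeing with $J$ on $E(\L)$, and the joint control under $\varphi^{J',+,0}_\L$ together with the quenched $\QQ$-probability estimate.

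The first step I would take is to dispose of the supremum over $J'$. The measure $\varphi^{J',+,0}_\L$ depends on $J'$ only through $(J'(e))_{e\in E^+(\L)}$, and $\phi^{J',+,0}_\L$ is stochastically increasing in those couplings by the ordering relations recalled in Section~2.3. Conditions~(i) and (ii) of goodness become \emph{more} likely when the boundary is wired more strongly, since spanning real-clusters inside $\BB_K(i)$ can only be reinforced and small secondary clusters can only be absorbed into the spanning component. The delicate part is condition~(iii): wiring more edges in $E^+(\L)$ can only enlarge the real-cluster $\BB_K^\ddagger(i)$, so the upper bound $|\BB_K^\ddagger(i)\cap\BB_K(i)|\le K^d m^*(1+\epsCG)$ could in principle fail under strong wiring even when it holds under weak wiring. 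I would control this from above by the density of the infinite cluster under the wired limit $\phi^{J,\w}$ and from below by the density under the free limit $\phi^{J,\f}$; the hypothesis $\b\notin\sN$ ensures these two densities coincide (both equal to $m^*$), so that the two-sided concentration needed for condition~(iii) is uniform in $J'$.

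Having reduced to the wired measure on $\L$ with interior dilution $J$, the single-box estimate would then follow from Theorem~2.1 of \cite{wouts-coarse-graining}: slab percolation ($\b>\hbc$) yields a unique spanning real-cluster inside $\BB_K(i)$ with probability $1-\exp(-cK)$; exponential decay of truncated connectivities gives condition~(ii); concentration of the cluster density around $m^*$ gives condition~(iii); and condition~(i) follows by pairing up the spanning clusters of neighboring boxes through their shared face. The $\QQ$-exceptional set of dilutions is controlled by a large-deviation bound for the i.i.d.\ Bernoulli environment. To pass from one box to $n$ boxes I would invoke Proposition~2.2 of \cite{wouts-coarse-graining}, which uses finite-energy arguments to decouple the bad events in disjoint boxes up to a bounded multiplicative factor per box, yielding the exponential bound $\exp(-\cg K n)$ after adjusting $\cg$. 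The $\QQ$-probability estimate for the $n$-box event follows by independence of $\QQ$ across disjoint edge sets, together with a union bound.

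The main obstacle is precisely the first step: verifying that condition~(iii) survives the supremum over $J'$. The upper density bound is monotone in the wrong direction and requires the input $\b\notin\sN$, while the lower density bound requires an additional argument ruling out that strong boundary wiring pulls mass out of the bulk cluster. Once this two-sided uniform-in-$J'$ concentration is established, the rest of the argument amounts to a direct transcription of the coarse-graining estimates of \cite{wouts-coarse-graining}, with the constants $\cg$ and $K_0$ inherited (up to harmless adjustments) from those results.
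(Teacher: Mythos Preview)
Your proposal is correct and follows the same route as the paper: the proposition is not proved in the paper at all but is stated as a direct consequence of combining Theorem~2.1 and Proposition~2.2 of \cite{wouts-coarse-graining}. Your elaboration on how the supremum over $J'$ is absorbed (via monotonicity for conditions (i)--(ii) and the hypothesis $\b\notin\sN$ for the two-sided density bound in condition (iii)) is a reasonable unpacking of what those cited results already contain, though the paper treats this as black-boxed.
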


Coarse graining gives a crude measure of the cost of phase coexistence.
Consider a path of neighboring boxes $\BB_K(i_1),\dots,\BB_K(i_j)$ in $\L$.
If the first box and last box are not connected by a path of open edges, then there must be a $1$-bad box somewhere along the path of boxes. Moreover, there must be a surface of at least $\lfloor K/K_0(1)\rfloor^{d-1}$ $1$-bad $\BB_{K_0(1)}$-boxes separating $\BB_K(i_1)$ from $\BB_K(i_j)$ \cite[cf. Lemma 4.2]{Stability-of-interfaces}. We obtain the following corollary to Proposition~\ref{prop:coarse graining}. Let $\cgf$ denote a positive constant, independent of $\epsCG$.

\begin{corollary}\label{cor:coarse graining}
Let $K\ge K_0(1)$. For $k=1,\dots,n$, let $\BB_K(i^k_1),\dots,\BB_K(i^k_{j_k})$ denote a simple path of neighboring boxes in $\L$ with length $j_k\le \exp(\sqrt K)$. Assume the $n$ chains are disjoint. Let $A$ denote the event that for each $k$, $\BB_K(i^k_1)$ is not connected to $\BB_K(i^k_{j_k})$ in $\cup_{l=1}^{j_k} \BB_K(i^k_l)$. With $\QQ$-probability $1-\exp(-\cgf K^{d-1}n)$,
\[
\sup_{J'} \varphi^{J',+,0}_\L (A) \le \exp(-\cgf K^{d-1}n).
\]
\end{corollary}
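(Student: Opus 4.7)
The plan is to reduce the corollary to Proposition~\ref{prop:coarse graining} applied at the fixed scale $K' := K_0(1)$. As the hint already notes, if the endpoints of the $k$-th chain are disconnected within $\bigcup_l \BB_K(i^k_l)$, then one can extract a $*$-connected set $S_k$ of $1$-bad $\BB_{K'}$-boxes separating these endpoints inside the chain, with $|S_k|\ge\lfloor K/K'\rfloor^{d-1}$ (any such separating surface must cut across the full $K$-thick tube). Because the $n$ chains are disjoint, the sets $S_1,\dots,S_n$ are disjoint too, so on $A$ there exist at least $n\lfloor K/K'\rfloor^{d-1}$ simultaneously $1$-bad $\BB_{K'}$-boxes.

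Next I would perform a Peierls-style enumeration. The $k$-th chain occupies at most $j_k(K/K')^d$ boxes at scale $K'$, and by the standard contour bound in $\ZZ^d$ the number of $*$-connected separating subsets of size $m_k$ inside such an elongated tube is at most $j_k(K/K')^{d-1}C_d^{m_k}$ for a dimension-dependent constant $C_d$. Union-bounding over all families $(S_1,\dots,S_n)$ and applying Proposition~\ref{prop:coarse graining} with parameters $K'$, $\epsCG=1$, and total box-count $\sum_k|S_k|$, the $\QQ$-probability that for at least one such family the $\sup_{J'}\varphi^{J',+,0}_\L$-mass of ``all boxes in $\bigcup_k S_k$ are $1$-bad'' exceeds $\exp(-\cg K'\sum_k|S_k|)$ is bounded by
\begin{equation*}
\prod_{k=1}^n\ \sum_{m_k\ge\lfloor K/K'\rfloor^{d-1}}\!\! j_k\,(K/K')^{d-1}\,C_d^{m_k}\,\exp(-\cg K'm_k).
\end{equation*}

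Choosing $K$ (hence, via $K_0$, the scale at which Proposition~\ref{prop:coarse graining} applies) large enough that $\cg K'>2\log C_d$, each factor is at most $j_k K^{d-1}\exp(-cK^{d-1})$; combined with $j_k\le\exp(\sqrt K)$ and $d\ge 2$ the prefactor is absorbed and the product becomes $\exp(-\cgf K^{d-1}n)$ for a new constant $\cgf$ (independent of $\epsCG$, since we used $\epsCG=1$). On the complementary $\QQ$-event the very same union bound over $(S_1,\dots,S_n)$ of the $\varphi$-estimates from Proposition~\ref{prop:coarse graining} yields $\sup_{J'}\varphi^{J',+,0}_\L(A)\le\exp(-\cgf K^{d-1}n)$, which is the desired conclusion.

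The main obstacle is the enumeration step: the chains may be very long ($j_k$ up to $\exp(\sqrt K)$), so the polynomial-in-$j_k$ prefactor produced by choosing a starting location for the separating surface has to be swallowed by the exponential decay in the surface size. This is why it is essential that the separating surface has at least $\lfloor K/K'\rfloor^{d-1}$ boxes \emph{and} that the chain length is restricted to $j_k\le\exp(\sqrt K)$; once both bounds are in place the accounting is routine Peierls.
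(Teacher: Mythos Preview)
Your proposal is correct and follows essentially the same route as the paper: extract, for each chain, a $*$-connected separating surface of at least $\lfloor K/K_0(1)\rfloor^{d-1}$ $1$-bad $\BB_{K_0(1)}$-boxes (the paper cites \cite[Lemma~4.2]{Stability-of-interfaces} for this step), then apply Proposition~\ref{prop:coarse graining} at the fixed scale $K'=K_0(1)$ together with a Peierls count over surfaces. The paper leaves the Peierls enumeration implicit; you have written it out, and your use of the hypothesis $j_k\le\exp(\sqrt K)$ to absorb the positional prefactor is exactly the point of that assumption.

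One small wording issue: the sentence ``Choosing $K$ \dots\ large enough that $\cg K'>2\log C_d$'' is muddled, since $K'=K_0(1)$ is a fixed constant independent of $K$. What you actually need is $\cg(1)\,K_0(1)>2\log C_d$, which can be arranged once and for all by enlarging $K_0(1)$ (Proposition~\ref{prop:coarse graining} remains valid with the same $\cg(1)$ for any larger threshold). With that clarification the argument is complete.
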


We can quantify the extent to which a magnetic field and mixed boundary conditions affect the coarse graining property. Recall that $E^\pm(\L)$ denotes the set of edges connecting $\L$ to the external vertex boundary $\pd^\pm\L$.

\begin{lemma}\label{bad boxes mixed bc}
Let $\zeta\in\Si$.  For any $\BB_K(i_1),\dots,\BB_K(i_n)\subset\L$, with $\QQ$-probability $1-\exp(- \cg K n)$,
\begin{align*}
\sup_{J'} \varphi^{J',\zeta,h}_\L(\BB_K(i_1),\dots,\BB_K(i_n) \text{ are $\epsCG$-bad})& \\\le \exp(\b |E^\pm(\L)| + \b h |\L| - \cg K n)&.\nonumber
\end{align*}
\end{lemma}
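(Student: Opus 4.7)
The plan is to reduce the bound to Proposition~\ref{prop:coarse graining} by a pointwise Radon--Nikodym comparison between the random-cluster measures $\phi^{J',\zeta,h}_\L$ and $\phi^{J',+,0}_\L$. The event $A=\{\BB_K(i_1),\dots,\BB_K(i_n)\text{ are }\epsCG\text{-bad}\}$ depends only on the real-cluster configuration in $\L$; hence it is $\sigma(\om|_{E^\w(\L)})$-measurable, and consequently $\varphi^{J',\cdot}_\L(A)=\phi^{J',\cdot}_\L(A)$, so I can work entirely with the marginal on non-ghost edges.

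To obtain this marginal for $\phi^{J',\zeta,h}_\L$, I would sum over the ghost configuration $\om^g$. For a fixed $\om^w$ such that $\pd^+\L$ and $\pd^-\L$ are $\om^w$-disconnected, decompose $\L$ into the $\om^w$-cluster of $\pd^+\L$, the $\om^w$-cluster of $\pd^-\L$, and the internal real-clusters $V_1,\dots,V_{n_0}$ that do not reach $\pd\L$. The constraint $\om\in\disconnected^\zeta_\L$ forbids opening any ghost edge to the $\pd^-\L$-cluster, contributing a factor at most $1$. The independent ghost edges attached to each internal cluster $V_i$ yield, after also absorbing the corresponding factor of $2$ from the cluster-count $2^{n(\L,\om)}$, a total weight $1+e^{-\b h|V_i|}\le 2$. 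Since the set $\{V_i\}$ of internal clusters is identical to the one appearing under $+,0$ boundary conditions (connectivity within $\L$ uses the same open edges), the two non-ghost marginals satisfy
\[
\phi^{J',\zeta,h}_\L(\om^w)\le \frac{Z^{J',+,0}_\L}{Z^{J',\zeta,h}_\L}\,\phi^{J',+,0}_\L(\om^w)
\]
for every $\om^w\in\disconnected^\zeta_\L$; for other $\om^w$ the left-hand side vanishes so the bound is trivial.

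To close the argument, I would bound the partition-function ratio by a direct Hamiltonian comparison. Replacing the wired $+$ boundary condition by $\zeta$ alters the boundary interaction only on $E^-(\L)$ and by at most $|E^-(\L)|$, and switching on the magnetic field contributes at most $h|\L|$, so $H^{J',\zeta,h}_\L(\s)\le H^{J',+,0}_\L(\s)+|E^\pm(\L)|+h|\L|$ for all $\s$; this gives $Z^{J',\zeta,h}_\L\ge\exp(-\b|E^\pm(\L)|-\b h|\L|)\,Z^{J',+,0}_\L$. Summing the previous display over $\om^w\in A$ and invoking Proposition~\ref{prop:coarse graining} for $\sup_{J'}\varphi^{J',+,0}_\L(A)$ then yields the claim, with the same $\QQ$-probability. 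I expect the main subtlety to be the ghost-edge marginalization: one must handle the $\disconnected^\zeta_\L$ constraint carefully so that the aggregate ghost weight on the internal clusters stays below $2^{n_0}$, matching the cluster-count factor of the $+,0$ marginal. Once this is done the remainder is a routine partition-function estimate.
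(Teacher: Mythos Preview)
Your proposal is correct and follows essentially the same strategy as the paper: reduce to Proposition~\ref{prop:coarse graining} via the partition-function ratio $Z^{J',+,0}_\L/Z^{J',\zeta,h}_\L\le\exp(\b|E^\pm(\L)|+\b h|\L|)$. The paper's proof works at the level of the coupled measure $\varphi$, fixing $(\s,\om)$ in the support of $\varphi^{J',+,0}_\L$ and bounding $\varphi^{J',\zeta,h}_\L(B)$ for the set $B$ of configurations agreeing on spins and real edges; you instead marginalize the ghost edges explicitly and compare the non-ghost marginals $\phi^{J',\zeta,h}_\L$ and $\phi^{J',+,0}_\L$ pointwise. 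Your route is a little more explicit about why the ghost-edge sum is dominated by the factor $2^m$ coming from the internal-cluster count, and it sidesteps a small bookkeeping issue in matching supports across boundary conditions (a real cluster touching $\pd^-\L$ must carry spin $-1$ under $\zeta$ but $+1$ under $+$, so the sets $B$ indexed by $\mathrm{supp}\,\varphi^{J',+,0}_\L$ need not cover $A$ in the $\zeta,h$ space). Working with the $\om^w$-marginal as you do avoids this entirely; the two arguments are otherwise identical in spirit and in the constants they produce.
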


\begin{proof}
Let $(\s,\om)$ be an element of the support of $\varphi^{J',+,0}_\L$; under $\om$ no ghost edges are open.
Let $B$ denote the set of configurations that agree with $(\s,\om)$ as far as the vertices and the real edges are concerned,
\[
\frac{\varphi^{J',\zeta,h}_\L(B)}{\varphi^{J',+,0}_\L(\{(\s,\om)\})} \le \frac{Z^{J',+,0}_\L}{Z^{J',\zeta,h}_\L}.
\]
The right-hand side is bounded above by $\exp(\b|E^\pm(\L)|+\b h |\L|)$. The lemma follows by Proposition~\ref{prop:coarse graining}.
\end{proof}

\subsection{The graphical construction of the Glauber dynamics}\label{sec:heat bath}

For $\xi\in\Si^\zeta_\L$, let $(\s^{s,\xi}_t)_{t\ge s}$ denote the dynamic Ising model, started at time $s$ in state $\xi$ and evolving according to the Glauber dynamics (also knows as heat-bath dynamics).
The Glauber dynamics can be described by the following graphical construction.
Let $\s^{s,\xi}_s=\xi$. Place a rate-one Poisson process at each vertex. Label the points of the Poisson process $(x_i,t_i)$ with $t_1<t_2<\dots$; to each point $(x_i,t_i)$ attach a uniform $[0,1]$ random variable $U_i$.
Let $\s^{s,\xi}_{t-}$ denote the Ising configuration immediately before time $t$.
For each point of the Poisson process, resample the spin at $x_i$ from the Ising measure conditional on the state of the neighboring spins.
If the probability $\s(x_i)=+1$ conditional on $\{\s(y)=\s^{s,\xi}_{t_i-}(y):y\sim x\}$ is $q$, set $\s^{s,\xi}_{t_i}(x_i)=+1$ if $U_i>1-q$, and $-1$ otherwise.
The dynamics are:
\begin{romlist}
\item Monotonic, if $\xi<\n$ then $\s^{s,\xi}_t \le \s^{s,\n}_t$.
\item Finite range, to update site $x$ only requires knowledge of the neighbors.
\item Bounded with respect to the transition rates.
\end{romlist}
Our results are also valid for other dynamics, such as the Metropolis dynamics, that share these properties.

\section{$L_1$-theory}\label{sec:L1}

\subsection{Microscopic and mesoscopic scales}
Recall that we have fixed $p\in(\pc,1)$ and $\b>\b_0$ with $\b\not\in\sN$.

To take advantage of the coarse graining result, we introduce some notation.  With reference to Theorem~\ref{theorem:main}, let $h>0$.  Define
\begin{align}\label{K def}
K= \lfloor h^{-1/(2d)} \rfloor, \qq N= K\lfloor h^{-1}/K \rfloor\approx h^{-1}.
\end{align}
We will call $N$ the macroscopic scale. This is the scale at which nucleation of plus droplets occurs.  The number $K$ denotes a mesoscopic scale. We will consider regions with size order $N$ composed of boxes $\BB_K(i)$.

Let $\sD\subset \RR^d$ denote a connected region.  Let $\DD(\sD,N,K)$ denote a discretized version of $\sD$ composed of mesoscopic boxes,
\begin{align}\label{DD}
\DD(\sD,N,K)=\bigcup_{i\in I} \BB_K(i), \qq I=\left\{x\in\ZZ^d:x+\left[-\frac{K}{2N},\frac{K}{2N}\right]^d\subset\sD\right\}.
\end{align}
We have used $h>0$ to define a set $\L=\DD(\sD,N,K)$ with size $\O(N)=\O(h^{-1})$ on which we wish to study the Ising model $\mu^{J,\zeta,h}_\L$. Given $\L$, we will also want to consider the Ising measure $\mu^{J,\zeta,0}_\L$.  From now on, $h$ will always determine the scale $N$ but will not always indicate the strength of the magnetic field.  The coarse graining implies that under $\mu^{J,\f,0}_\L$, $\s(\BB_K(i))$ is close to either $\pm m^*$ with high probability. This motivates the definition below of the magnetization profile $\MM^\zeta_K:\RR^d\to\RR$ associated with the Ising configuration $\s$.

Let $\L$ denote an arbitrary finite subset of $\ZZ^d$; we will mostly be interested in sets of the form $\DD(\sD,N,K)$ but we will also consider sets that differ from $\DD(\sD,N,K)$ around the boundary. Let $\zeta\in\Si$ denote a boundary condition.  Recall the notation \eqref{average spin} and that under $\mu^{J,\zeta,h}_\L$, $\s(x)=\zeta(x)$ for $x\not\in\L$.

For $\s\in\Si_\L^\zeta$, define the profile $\MM_K^\zeta:\RR^d\to\RR$ as follows.  For $\bx\in\RR^d$, choose $i$ such that $N\bx\in[-K/2,K/2)^d+K i$. Let
\begin{align}\label{def MM}
\MM_K^\zeta(\bx)= \frac12\times
\begin{cases}
1+\s(\BB_K(i))/m^*, & \BB_K(i)\subset \L,\\ 1+\s(\BB_K(i)), & \text{otherwise.}\\
\end{cases}
\end{align}
A value of 1 indicates plus phase, 0 indicates minus phase. The idea behind $L_1$-theory is that $\MM_K^\zeta$ can be approximated by the class of bounded variation profiles. The large deviations of $\MM_K^\zeta$ can be described in terms of surface tension.

\subsection{Surface tension in a parallelepiped}

In statistical physics, surface tension is the excess free energy per unit area due to the presence of an interface.  The definitions of surface tension in \cite{schonmann-shlosman} and \cite{wouts-surface-tension} differ by a factor of $\b$; we have chosen to follow \cite{wouts-surface-tension}.

Let $(\bn,\bu_2,\dots,\bu_d)$ denote an orthonormal basis for $\RR^d$ and let $\sR$ denote the rectangular parallelepiped
\begin{align*}
\sR_{L,H}(\bn,\bu_2,\dots,\bu_d):=\left\{t_1 \bn+ \sum_{k = 2}^d t_k\bu_k: \mathbf{t}\in\left[-\frac{H}2,\frac{H}2\right]\times\left[-\frac{L}2,\frac{L}2\right]^{d-1}\right\}.
\end{align*}
$\sR$ is centered at the origin, has height $H$ in the direction $\bn$ and extension $L$ in the other directions.

Let $\L=\DD(\sR,N,1)$ denote a discrete version of $\L$ \eqref{DD}. The box $\L$ has sides of length $NL$ in the directions $\bu_2, \dots, \bu_d$.  The surface tension can be written in terms of either the Ising model partition function \eqref{partition function} or the random-cluster representation.

\begin{definition}\label{def-tauJ}
Let $\zeta$ denote the configuration in $\Si$ given by $\zeta(y)=+1$ if $y\cdot\bn\ge 0$ and $\zeta(y)=-1$ otherwise.  The surface tension $\t^{J}_\L$ is defined by
\begin{align*}
\t^{J}_\L = \frac{1}{(NL)^{d-1}} \log \frac{Z^{J,+,0}_\L}{Z^{J,\zeta,0}_\L} =\frac{1}{(NL)^{d-1}} \log \frac{1}{\phi^{J,+,0}_\L(\disconnected^\zeta_\L)}.
\end{align*}
\end{definition}

Let $J\sim\QQ$. Surface tension converges in probability as $N\to\oo$ \cite[Theorem~1.3]{wouts-surface-tension}:
\begin{proposition}\label{thm-conv-tauq}
For $\b> 0$ and $\bn\in\sS^{d-1}$, there exists $\t(\bn)\ge 0$, the surface tension perpendicular to $\bn$, such that for all parallelepipeds $\sR=\sR_{L,H}(\bn,\bu_2,\dots,\bu_d)$,
\begin{align}\label{def tau}
\t^J_\L \xrightarrow{\QQ\text{-probability}} \t (\bn ) \text{ as } N\to \oo.
\end{align}
\end{proposition}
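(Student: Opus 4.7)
The plan is to apply a multi-parameter sub-additive ergodic theorem to the partition-function ratio in the random environment and then verify that the resulting limit depends only on the direction $\bn$.

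First, I would write $g_\L(J) := -\log \phi^{J,+,0}_\L(\disconnected^\zeta_\L)$, so that $\t^J_\L = g_\L(J)/(NL)^{d-1}$. To establish sub-additivity in the $(d-1)$ lateral directions, I partition the base $[-L/2,L/2]^{d-1}$ into sub-rectangles inducing a decomposition $\L = \bigcup_i \L^{(i)}$. Applying FKG to the random-cluster measure $\phi^{J,+,0}_\L$ (noting that the event $\disconnected^\zeta$ is decreasing in the edge configuration) and conditioning on the edges crossing the internal faces being closed, one obtains
\[
\phi^{J,+,0}_\L(\disconnected^\zeta_\L) \ge \exp\bigl(-CN^{d-1}L^{d-2}\bigr) \prod_i \phi^{J,+,0}_{\L^{(i)}}\bigl(\disconnected^\zeta_{\L^{(i)}}\bigr),
\]
which translates to $g_\L(J) \le \sum_i g_{\L^{(i)}}(J) + CN^{d-1}L^{d-2}$, i.e.\ sub-additive up to a boundary error of relative order $1/L$.

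Since $\QQ$ is translation-invariant and ergodic, the Akcoglu--Krengel multi-parameter sub-additive ergodic theorem applied to the stationary field $g_\L$ yields a deterministic constant $\t(\bn)$ such that $\t^J_\L \to \t(\bn)$ almost surely, hence in $\QQ$-probability, as $N \to \oo$ for fixed $L,H$. Non-negativity of $\t(\bn)$ is immediate since $\phi^{J,+,0}_\L(\disconnected^\zeta_\L) \le 1$. To check independence from $L$ and $H$: enlarging $H$ beyond the correlation length only adds bulk contributions that are bounded uniformly and hence vanish when divided by $(NL)^{d-1}$, using that $\b \notin \sN$ to identify the bulk free and wired limit measures; the $L$-dependence is eliminated by the standard sub-additive thermodynamic-limit argument, comparing the $L \to \oo$ limit to the value obtained along cubic boxes. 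If one prefers to avoid the multi-parameter ergodic machinery, McDiarmid's bounded-differences inequality applied to $J \mapsto \t^J_\L$ (each edge contributes at most $O(\b/(NL)^{d-1})$ to a change of $\t^J_\L$) gives concentration around the mean, reducing the problem to convergence of $\QQ[\t^J_\L]$.

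The main obstacle is justifying the sub-additive bound with the stated boundary error. Closing the edges on the internal faces costs energy, and one must verify that the overall cost scales only as the $(d-2)$-dimensional measure of the internal face boundaries times the height $N$, rather than as the enclosed volume; this is essentially a finite-energy bound, easiest to obtain through the random-cluster representation where each forced edge contributes a bounded multiplicative factor. A subtler technical point is that the mixed boundary condition $\zeta$ entering the definition of $\disconnected^\zeta$ changes when restricted to sub-boxes, so one has to align the plus/minus split on each $\pd\L^{(i)}$ with the global split $\{y\cdot\bn\ge 0\}$ and absorb the mismatch into the boundary error; the coarse-graining estimates of Lemma~\ref{bad boxes mixed bc} provide exactly the right tool to control this mismatch uniformly in $J$.
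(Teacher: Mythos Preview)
The paper does not prove this proposition; it is quoted from \cite[Theorem~1.3]{wouts-surface-tension}. Your outline (approximate sub-additivity in the lateral directions plus concentration via bounded differences) is the standard route and is essentially how the cited result is obtained, so the overall strategy is sound.

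There is, however, a genuine gap in your formulation of the sub-additive step. The internal faces produced by partitioning the base have size $(NH)\times(NL)^{d-2}$, so the finite-energy cost of closing them is of order $N^{d-1}HL^{d-2}$, not $N^{d-1}L^{d-2}$ as you wrote; after dividing by $(NL)^{d-1}$ the relative error is $H/L$, which does not vanish as $N\to\infty$ with $L,H$ fixed. Thus the Akcoglu--Krengel theorem does not apply directly to the limit stated in the proposition. What one actually does (and what the cited paper does) is first fix the height at some large constant $T_0$, run the sub-additive/ergodic argument in the base alone where the error is genuinely $o(M^{d-1})$, and then prove separately that enlarging the height from $T_0$ to $NH$ changes $\t^J_\L$ by $o(1)$. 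That last step is not just ``bulk contributions cancel''; it requires an interface-localization estimate ensuring the disconnection event is insensitive to the region beyond height $T_0$. Two smaller points: your appeal to Lemma~\ref{bad boxes mixed bc} is misplaced, since that lemma bounds bad-box probabilities under the coupled measure rather than comparing partition functions across boundary conditions; and you invoke $\b\notin\sN$, whereas the proposition is stated (and proved in the reference) for all $\b>0$.
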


Surface tension is strictly positive at temperatures below the threshold for slab percolation \cite[Proposition 2.11]{wouts-surface-tension}:

\begin{proposition}
\label{prop-tauq-pos}
There are constants $C,c>0$ such that for $\bn\in\sS^{d-1}$ and $\b > \hbc$, $c \t(\be_1) \le \t (\bn )\le C\t(\be_1)\le C \b$.
\end{proposition}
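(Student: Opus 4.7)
The plan is to prove the three inequalities $\t(\be_1)\le C\b$, $\t(\bn)\le C\t(\be_1)$ and $c\t(\be_1)\le \t(\bn)$ in turn. For the first, the random-cluster formula of Definition~\ref{def-tauJ} reduces the problem to lower-bounding $\phi^{J,+,0}_\L(\disconnected^\zeta_\L)$. Let $E_\c\subset E^\w(\L)$ denote the $(NL)^{d-1}$ edges crossing the midplane $\{x_1=0\}$. If every edge in $E_\c$ is closed then the two $\be_1$-faces of $\pd\L$ are disconnected, so this event lies in $\disconnected^\zeta_\L$. Since $p_e\le 1-e^{-\b}$ whenever $J(e)=1$, the finite-energy bound for the $q=2$ random-cluster model, $\phi^{J,+,0}_\L(\om(e)=0\,|\,\text{rest})\ge e^{-\b}$, iterates to $\phi^{J,+,0}_\L(\text{every edge of }E_\c\text{ is closed})\ge e^{-\b(NL)^{d-1}}$. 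Hence $\t^J_\L\le \b$, and passing to the limit in probability yields $\t(\be_1)\le \b$.

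The remaining two inequalities rest on two structural properties of $\t$. First, after extension by positive 1-homogeneity to $\RR^d$, $\t$ is convex (and hence subadditive); this is the pyramid inequality of \cite{wouts-surface-tension}, obtained via a geometric comparison of partition functions on tilted parallelepipeds together with a sub-additive ergodic argument. Second, by cubic symmetry of the lattice and of the disorder distribution $\QQ$, $\t(A\bn)=\t(\bn)$ for any signed permutation $A$, and in particular $\t(\be_i)=\t(\be_1)$ for all $i$. Subadditivity applied to $\bn=\sum_i n_i\be_i$ then gives the upper bound
\[
\t(\bn)\le\sum_i|n_i|\,\t(\be_i)=\|\bn\|_1\t(\be_1)\le \sqrt{d}\,\t(\be_1).
\]

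For the lower bound, cubic symmetry lets us assume $n_1=\|\bn\|_\oo\ge d^{-1/2}$. For each $i\ge 2$, let $\bn^{(i)}$ denote the reflection of $\bn$ in its $i$-th coordinate; cubic symmetry gives $\t(\bn^{(i)})=\t(\bn)$, and convexity at $\l=1/2$ yields
\[
\t\Big(\tfrac{\bn+\bn^{(i)}}{2}\Big)\le \tfrac{1}{2}\bigl(\t(\bn)+\t(\bn^{(i)})\bigr)=\t(\bn),
\]
so zeroing out the $i$-th coordinate can only decrease $\t$. Iterating over $i=2,\dots,d$ leaves only the first coordinate and gives $n_1\t(\be_1)=\t(n_1\be_1)\le \t(\bn)$, i.e.\ $\t(\bn)\ge d^{-1/2}\t(\be_1)$.

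The only nontrivial ingredient is the pyramid inequality establishing convexity of the 1-homogeneous extension of $\t$. In the undiluted model one can rely on exact lattice symmetries; in the dilute setting one must carry out the tilted-parallelepiped comparison of partition functions at the quenched level and combine it with a sub-additive ergodic argument, which is precisely the content of the results borrowed from \cite{wouts-surface-tension}. Once convexity is available, the cube-symmetric averaging argument used above makes both the upper and lower bounds elementary.
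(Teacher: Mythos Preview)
The paper does not actually prove this proposition; it simply records it as a citation of \cite[Proposition~2.11]{wouts-surface-tension}. Your argument is correct and supplies the details the paper leaves out. The bound $\t(\be_1)\le\b$ via the finite-energy property of the random-cluster representation is standard and clean (minor discretization issues with the count of midplane edges in $E^\w(\L)$ disappear in the limit), and your upper and lower comparisons of $\t(\bn)$ with $\t(\be_1)$ via convexity of the $1$-homogeneous extension together with the cubic symmetries of $\ZZ^d$ and of $\QQ$ are exactly the natural arguments. The one nontrivial external input---convexity (the pyramid inequality) of the quenched surface tension in the dilute setting---is indeed established in \cite{wouts-surface-tension}, so your appeal to that reference is appropriate. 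One small remark: the sentence in the paper preceding the proposition emphasises strict positivity of $\t$ for $\b>\hbc$; that does not follow from your symmetry/convexity argument and is a separate ingredient from \cite{wouts-surface-tension}. Your proof delivers precisely the stated chain of inequalities, and, combined with that positivity result, gives the uniform lower bound the paper relies on later.
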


We note for completeness that we are discussing {\em quenched} surface tension. {\em Annealed} surface tension, which will not be used in this paper, describes the cost of phase coexistence under the averaged measure $\QQ \mu^{J,\zeta,h}$.  When studying large deviations under the annealed measure, the environment $J$ changes to reduce the surface tension. Although we are interested in the large deviations of $J$, we prefer to control them `by hand' using the $\QQ_\th$ notation defined in \eqref{Q-th-def}. This is less efficient in terms of the size of the large deviation needed. However, it is much simpler.

\subsection{Surface tension in cones}
\label{sec:st}

$L_1$-theory describes the Ising model at equilibrium \cite{BIV1,CerfStFlour,wouts-surface-tension}.
We will restrict our attention to certain subsets of $\RR^d$ with zero surface tension at the boundary. At the microscopic scale, this corresponds to the sampling $J\sim\QQ$ but conditioned on the existence of a surface of edges with $J(e)=0$.

For $\th\in(0,2\pi]$ define a linear cone $\sA_\th$,
\[
\sA_\th:=\{\bx\in\RR^d: x_1 \ge \|\bx\|_2 \cos (\th/2)\}.
\]
For $\th=2\pi$, $\sA_\th$ is simply the whole of $\RR^d$.
Let $\sL^d$ denote the Lebesgue measure on $\sA_\th$ and let $\sV(u,\eps)$ denote the $\sL^d$-ball of radius $\eps$ about $u:\sA_\th\to\RR$.

The {\em perimeter} $\sP(U)$ of a Borel subset $U \subset \sA_\th$ can be written in terms of functions of bounded variation (see \cite[Chapter 3]{N129} and \cite[Section 3.1]{wouts-surface-tension}).
The set of bounded variation profiles $\BV$ is given by
\begin{gather*}
\BV := \left\{ U: U \subset \sA_\th \text{ is a Borel set and } \sP(U) <\oo \right\}.
\end{gather*}
Bounded variation profiles $U \in \BV$ have a {\em reduced boundary} $\pd^\star U$ and an outer normal $\bn^U:\pd^\star U \to \sS^{d-1}$.
Let $\sH^{d-1}$ denote the $d-1$ dimensional Hausdorff measure on $\sA_\th$; $\sH^{d-1} (\pd^\star U) =\sP(U)$.
We will write $\pd U$ to refer to the reduced boundary of $U$ excluding (when $\th<2\pi$) the boundary of $\sA_\th$,
\begin{align}\label{pdU}
\pd U := \pd^\star U\sm\pd^\star\sA_\th.
\end{align}
The outer normal $\bn^U$ defined on $\pd U$ is Borel measurable.
With reference to \eqref{def tau}, this allows us to define the {\em surface tension} and {\em energy} of bounded variation profiles for the dilute Ising model.
Define the surface tension $\sF$ by
\begin{align}
  \sF (U) = \int_{\pd U} \t (\bn^U(\bx))
  d\sH^{d-1} (\bx) \label{eq-def-Ftau} ,\qq U \in \BV.
\end{align}
Define the energy $\sE$ by
\begin{align}\label{def sE}
\sE(U)=\sF(U) - \b m^* \sL^d(U), \qq U \in \BV.
\end{align}
The motivation for these quantities is that they measure, in the following sense, the cost of phase coexistence associated with the Ising model.
Sample $J$ from $\QQ$ conditional on $J(e)=0$ for all $e=\{x,y\}$ such that $x$ but not $y$ is in $\DD(\sA_\th,N,K)$ \eqref{DD}. This makes the surface tension on $\pd^\star\sA_\th$ zero.
Let $\sD$ denote a compact subset of $\sA_\th$.
For a profile of bounded variation $U\subset\sD$, the surface of $\DD(U,N,K)$ has size $\O(1/h^{d-1})$ and $\DD(U,N,K)$ has volume $\O(1/h^d)$ \eqref{K def}.
Heuristically, we expect that the probability of seeing the plus phase in $\DD(U,N,K)$ and the minus phase in $\DD(\sD\sm U,N,K)$ to be approximately
\begin{align*}
\exp\left(-\frac{\sF(U)}{h^{d-1}} \right)  &\qtext{under} \mu^{J,-,0}_{\DD(\sD,N,K)}, \q\text{ and}\\
\exp\left(\frac{1}{h^{d-1}} \left[\inf_{U'} \sE(U')-\sE(U) \right]\right)  &\qtext{under} \mu^{J,-,h}_{\DD(\sD,N,K)}.
\end{align*}
The infimum is over profiles $U'\in\BV$ compatible with the boundary conditions.
For general $\sD$, it is difficult to evaluate the infimum---the conflicting contributions of the positive field and negative boundary conditions may lead to a complicated equilibrium magnetization profile under $\mu^{J,-,h}_{\DD(\sD,N,K)}$.
The problem is simpler if $\sD$ is the Wulff shape.

\section{The Wulff shape in $\sA_\th$}\label{sec:wulff}

\subsection{Wulff, Winterbottom and Summertop shapes}\label{sec:wulff and summertop shapes}

Let $U\subset\sA_\th$ denote a set of bounded variation. Consider the problem of minimizing $\sF(U)$ given that $U$ has volume $b^d$.

\begin{proposition}\label{summertop prop}
Let $\th\in(0,\pi]\cup\{2\pi\}$.
The problem of finding a set of bounded variation $U\subset \sA_\th$ with volume $b^d$ and minimal surface tension has a unique solution when $\th<\pi$; for $\th=\pi$ and $\th=2\pi$ the solution is unique up to translations.
There is a scaling constant $w_\th$ such that the solution is the convex shape
\begin{align}\label{wulff shape}
\sW_\th(b)=w_\th b \{\bx\in\sA_\th: \forall \bn\in\sS^{d-1},\ \bx\cdot \bn \le \t(\bn)\}.
\end{align}
\end{proposition}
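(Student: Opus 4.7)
The proposition is a Wulff/Winterbottom/Summertop-type isoperimetric statement. Let $W := \{\bx \in \RR^d : \bx \cdot \bn \le \t(\bn) \text{ for all } \bn \in \sS^{d-1}\}$ denote the full Wulff crystal; the candidate minimizer at volume $b^d$ is $\sW_\th(b) = w_\th b\,(W \cap \sA_\th)$ with $w_\th := \sL^d(W \cap \sA_\th)^{-1/d}$. The plan is to pass through the classical case $\th = 2\pi$, reduce $\th = \pi$ to it by a reflection trick, and handle $\th \in (0,\pi)$ by a Brunn--Minkowski inequality inside the cone. Existence of a minimizer follows standard lines: $\sF$ is lower semicontinuous on $\BV$ and $L^1$-compactness of finite-perimeter sets of bounded volume applies once one quotients by the translational freedom preserving $\sA_\th$. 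For $\th = 2\pi$ there is no cone constraint and the statement is the classical anisotropic Wulff theorem, with uniqueness up to translation.

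For $\th = \pi$, $\sA_\pi$ is the half-space $\{x_1 \ge 0\}$ and the reflection $\rho$ across its bounding hyperplane is a lattice symmetry, so $\t \circ \rho = \t$ and $\rho(W) = W$. Given any $U \subset \sA_\pi$ with $\sL^d(U) = b^d$, I would form $\tilde U := U \cup \rho(U)$: its volume is $2 b^d$, and its full anisotropic perimeter in $\RR^d$ equals $2 \sF(U)$, because the boundary parts of $U$ and $\rho(U)$ on $\pd \sA_\pi$ cancel into the interior of $\tilde U$. Applying the $\th = 2\pi$ Wulff inequality to $\tilde U$ together with $\sL^d(W) = 2 \sL^d(W \cap \sA_\pi)$ yields $\sF(U) \ge \sF(\sW_\pi(b))$; the equality case forces $\tilde U$ to be a translate of $w_\pi b\, W$, hence $U$ to be a translate of $\sW_\pi(b)$ by a vector parallel to $\pd \sA_\pi$.

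For $\th \in (0,\pi)$, $\sA_\th$ is a strictly convex cone with apex at the origin. I would prove the anisotropic isoperimetric inequality in the cone,
\[
\sF(U) \ge d\, \sL^d(W \cap \sA_\th)^{1/d}\, \sL^d(U)^{(d-1)/d},
\]
by Brunn--Minkowski restricted to $\sA_\th$: since $A + B \subset \sA_\th$ whenever $A, B \subset \sA_\th$ (by convexity of the cone), the classical inequality $\sL^d(A+B)^{1/d} \ge \sL^d(A)^{1/d} + \sL^d(B)^{1/d}$ applies. Differentiating the resulting volume estimate for $U + \eps (W \cap \sA_\th)$ at $\eps = 0$ recovers $\sF(U)$ as anisotropic Minkowski content; crucially, only $\pd U$ contributes, because the Minkowski sum remains inside $\sA_\th$ and so the part of $\pd^\star U$ lying on $\pd^\star \sA_\th$ produces no extra volume growth. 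The equality case (Bonnesen-type rigidity for Brunn--Minkowski) forces $U$ to be homothetic to $W \cap \sA_\th$; since $\sW_\th(b)$ is anchored to both boundary rays of $\sA_\th$ at the apex, no nontrivial translation keeps it inside $\sA_\th$, yielding strict uniqueness. The main obstacle is precisely this $\th \in (0,\pi)$ step: one must carefully justify that the Minkowski-content limit equals $\sF(U)$ (rather than the full anisotropic perimeter, since the convention $\pd U = \pd^\star U \setminus \pd^\star \sA_\th$ excludes the cone boundary) and then extract the Brunn--Minkowski rigidity statement in the cone, both of which rely on the convexity of $\sA_\th$.
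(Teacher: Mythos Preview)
Your strategy for $\th\in(0,\pi)$ is essentially the paper's: it too uses Brunn--Minkowski applied to the Minkowski sum $U+\eps\sW_\th$ inside the cone. The paper does not split off $\th=\pi$ or $\th=2\pi$ as separate cases; your reflection argument for $\th=\pi$ is a pleasant alternative, but unnecessary once the cone argument is set up correctly.

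However, there are two genuine gaps.

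\emph{First}, your identification of the Minkowski content with $\sF(U)$ is wrong, not just delicate. Writing $\hat\sF(U):=\lim_{\eps\to0}\eps^{-1}(|U+\eps\sW_\th|-|U|)$, one has in general only $\hat\sF(U)\le\sF(U)$, not equality. The point is that the support function of $\sW_\th=W\cap\sA_\th$ is bounded above by the support function $\tau$ of $W$, and the inequality can be strict in directions where the $\tau$-maximizer in $W$ lies outside $\sA_\th$. So the part of $\pd^\star U$ on $\pd^\star\sA_\th$ contributing zero is correct, but on $\pd U$ you are integrating $h_{\sW_\th}(\bn^U)\le\tau(\bn^U)$, not $\tau(\bn^U)$. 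The paper handles this cleanly: it introduces $\hat\sF$ explicitly, observes $\hat\sF\le\sF$ from the containment $\bx+\eps\sW_\th\subset(\bx+\eps\sW_{2\pi})\cap\sA_\th$, uses Brunn--Minkowski to show $\sW_\th(b)$ uniquely minimizes $\hat\sF$, and then checks that $\hat\sF(\sW_\th(b))=\sF(\sW_\th(b))$ because $(\sW_\th+\eps\sW_{2\pi})\cap\sA_\th=\sW_\th+\eps\sW_\th$ by convexity of $\sW_{2\pi}$. This last equality is the missing ingredient in your sketch; without it Brunn--Minkowski only bounds $\hat\sF$, not $\sF$.

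\emph{Second}, your uniqueness argument for $\th<\pi$ is incorrect. You claim no nontrivial translation keeps $\sW_\th(b)$ inside $\sA_\th$, but translations by $t\be_1$ with $t>0$ (along the cone axis) certainly do, since $\sA_\th$ is a cone. The paper's argument is the right one: for $\bx\in\sA_\th\setminus\{0\}$, the translate $\bx+\sW_\th(b)$ has strictly larger $\sF$ because part of its boundary that previously lay on $\pd^\star\sA_\th$ (and was excluded from $\pd U$) is now interior to $\sA_\th$ and contributes positively.
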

\noindent If we omit the $b$, take $b=w_\th^{-1}$ so that $\sW_\th\equiv\sW_\th(w_\th^{-1})$.

Special cases of $\sW_\th$ are known by a variety of names.
When $\th=2\pi$, $\sW_\th$ is the Wulff shape.
When $\th=\pi$, $\sW_\th$ is the Winterbottom shape.
When $\th\in(0,\pi)$ and $d=2$, $\sW_\th$ is the Summertop shape \cite{Summertop}.
We will refer to $\sW_\th$ as the Wulff shape in $\sA_\th$.

\begin{proof}[Proof of Proposition~\ref{summertop prop}]
Let $U$ denote a compact subset of $\sA_\th$.
With reference to \cite[(G)]{KoteckyPfister}, the formula \eqref{eq-def-Ftau} that defines the surface tension $\sF(U)$ is equivalent to
\[
\sF(U)=\lim_{\eps\to 0} \frac{|(U+\eps \sW_{2\pi})\cap \sA_\th| - |U|}{\eps}.
\]
We can use \eqref{wulff shape} to define a second measure of surface tension. Let
\[
\hat{\sF}(U)=\lim_{\eps\to 0} \frac{|U+\eps \sW_\th|-|U|}\eps.
\]
Observe that:
\begin{romlist}
\item
For any $\bx\in\sA_\th$ and $\eps>0$,
\[
\bx+\eps\sW_\th \subset (\bx+\eps \sW_{2\pi}) \cap \sA_\th.
\]
Thus for $U\in\BV$, $\hat{\sF}(U)\le \sF(U)$.
\item
By the Brunn--Minkowski theorem, $\sW_\th(b)$ is the unique shape in $\sA_\th$ (up to translations) with volume $b^d$ and minimal $\hat{\sF}$-surface tension.
\item
By the convexity of $\sW_{2\pi}$, when $U=\sW_\th$,
\[
(U+\eps\sW_{2\pi})\cap\sA_\th = U+\eps\sW_\th
\]
and so  $\hat{\sF}(\sW_\th(b))=\sF(\sW_\th(b))$.
\end{romlist}
Therefore any shape with volume $b^d$ in $\sA_\th$ with minimal $\sF$-surface tension must take the shape $\sW_\th(b)$. 
The claim of uniqueness when $\th<\pi$ follows from the fact that for any $\bx\in\sA_\th\sm\{0\}$, $\sF(\bx+\sW_\th(b))>\sF(\sW_\th(b))$.
\end{proof}

\subsection{Critical droplets}\label{sec:critical droplets}

Let $\mathsf{E}^\th(b)$ account for the cost of filling $\sW_\th(b)$ with the plus phase,
\begin{align}\label{def phi-sW}
\mathsf{E}^\th(b):=\sE(\sW_\th(b))= b^{d-1}\sF(\sW_\th(1))- b^d \b m^*.
\end{align}
The positive term represents the cost of phase coexistence, the negative term represents the benefit of conforming to the magnetic field. Let $\Bcritical$ denote the maximizer of $\mathsf{E}^\th$, and let $B_\mathrm{root}^\th$ denote the positive root of $\mathsf{E}^\th$,
\begin{align}\label{BcB0}
\Bcritical=\frac{d-1}d \frac{\sF(\sW_\th(1))}{\b m^*}, \qq B_\mathrm{root}^\th=\frac{\sF(\sW_\th(1))}{\b m^*}.
\end{align}
The significance of $B_\mathrm{root}^\th$ is that the Ising measure with minus boundary conditions and magnetic field $h$ on $\DD(\sW_\th(b),N,K)$ favors the minus phase if $b<B_\mathrm{root}^\th$, whereas it favors the plus phase if $b>B_\mathrm{root}^\th$. Let
\begin{align}
\label{A-def}
\mathsf{E}^\th_\c:=\mathsf{E}^\th(\Bcritical)=\left(\frac{\sF(\sW_\th(1))}d\right)^d \left(\frac{d-1}{\b m^*}\right)^{d-1}.
\end{align}
The maximum $\mathsf{E}^\th_\c$ of $\mathsf{E}^\th$ characterizes the energy needed to create arbitrarily large plus droplets in the cone $\sA_\th$, starting from the minus phase.

As the $J(e)$ are independent we can find regions that resemble, due to high local dilution, $\DD(\sW_\th(b),N,K)$ for any $\th\in(0,\pi)$ and any $b>0$. However, to maximize the number of catalysts we do not want to take $b$ any larger than we have to.

\begin{proposition}\label{catalyst size}
The diameter of the critical droplet is bounded uniformly over $\b>\hbc$ and $\th\in(0,\pi)\cup\{2\pi\}$.
As $\th\to 0$, $\mathsf{E}^\th_\c=\O(\b\th^{d-1})$.
\end{proposition}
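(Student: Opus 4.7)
My plan is to reduce both claims to bounds on the geometric quantities $|\sW_\th|$, $\sF(\sW_\th)$, and $\text{diam}(\sW_\th)$ in terms of $\b$ and $\th$, where $\sW_\th=\sW_{2\pi}\cap\sA_\th$ is the unscaled cone Wulff shape. The starting point is Proposition~\ref{prop-tauq-pos}, which gives $\t(\bn)\le C\b$ uniformly in $\bn$. Since $\bx\in\sW_{2\pi}$ implies $|\bx|=\bx\cdot(\bx/|\bx|)\le\t(\bx/|\bx|)\le C\b$, we get $\sW_{2\pi}\subset B(0,C\b)$ and hence $\sW_\th\subset B(0,C\b)\cap\sA_\th$. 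This immediately yields $\text{diam}(\sW_\th)\le 2C\b$ and $|\sW_\th|\le C^d\b^d|B(0,1)\cap\sA_\th|$; an elementary spherical-cap estimate gives $|B(0,1)\cap\sA_\th|=\O(\th^{d-1})$ as $\th\to 0$.

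The key technical step is an isoperimetric-type upper bound $\sF(\sW_\th)\le d|\sW_\th|$. I would derive it from the Brunn--Minkowski formula used in the proof of Proposition~\ref{summertop prop}, namely $\sF(U)=\lim_{\eps\to 0}(|(U+\eps\sW_{2\pi})\cap\sA_\th|-|U|)/\eps$. Taking $U=\sW_\th$, one has $\sW_\th+\eps\sW_{2\pi}\subset\sW_{2\pi}+\eps\sW_{2\pi}=(1+\eps)\sW_{2\pi}$; then, using the crucial fact that $\sA_\th$ is a cone and therefore $(1+\eps)\sW_{2\pi}\cap\sA_\th=(1+\eps)\sW_\th$, we obtain $|(\sW_\th+\eps\sW_{2\pi})\cap\sA_\th|\le(1+\eps)^d|\sW_\th|$. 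Passing to the limit gives $\sF(\sW_\th)\le d|\sW_\th|$.

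To conclude, I would use $\sW_\th(b)=w_\th b\sW_\th$ with $w_\th=|\sW_\th|^{-1/d}$ to compute
\[
\text{diam}(\sW_\th(\Bcritical))=w_\th\Bcritical\,\text{diam}(\sW_\th)=\frac{d-1}{d}\cdot\frac{\sF(\sW_\th)\,\text{diam}(\sW_\th)}{|\sW_\th|\,\b m^*}.
\]
Applying $\sF(\sW_\th)\le d|\sW_\th|$ and $\text{diam}(\sW_\th)\le 2C\b$ collapses this to $\text{diam}(\sW_\th(\Bcritical))\le 2C(d-1)/m^*$, which is bounded uniformly for $\b$ bounded away from the slab-percolation threshold (concretely, for $\b\ge\b_0$, where $m^*(\b)\ge m^*(\b_0)>0$). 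For the asymptotic $\mathsf{E}^\th_\c=\O(\b\th^{d-1})$, I would observe that $\sF(\sW_\th(1))=w_\th^{d-1}\sF(\sW_\th)\le d|\sW_\th|^{1/d}=\O(\b\th^{(d-1)/d})$; substituting into the closed-form expression \eqref{A-def} for $\mathsf{E}^\th_\c$ yields $\mathsf{E}^\th_\c=\O(\b\th^{d-1})$.

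The hardest part will be the cone-respecting Brunn--Minkowski bound in the second paragraph: it is what prevents $\sF(\sW_\th)/|\sW_\th|$ from blowing up as $\th\to 0$, and its proof depends essentially on the dilation-invariance of $\sA_\th$. Everything else is either a direct consequence of Proposition~\ref{prop-tauq-pos} or a routine substitution into the explicit formulas for $\Bcritical$ and $\mathsf{E}^\th_\c$.
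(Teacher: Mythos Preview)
Your proof is correct and takes a genuinely different route from the paper's. The paper bounds $\sF(\sW_\th(1))$ by introducing a concrete comparison shape, the truncated cone $\sU_\th=u_\th\{\bx\in\sA_\th:x_1\le 1\}$ of unit volume, and then invoking the optimality of the Wulff shape together with a direct estimate of $\sH^{d-1}(\pd\sU_\th)$. You instead go back to the Minkowski-sum formula quoted in the proof of Proposition~\ref{summertop prop} and observe that $(\sW_\th+\eps\sW_{2\pi})\cap\sA_\th\subset(1+\eps)\sW_\th$, which yields the clean inequality $\sF(\sW_\th)\le d|\sW_\th|$ (in fact an equality, by part (iii) of that proof). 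This is arguably sharper and more structural: the dependence on $\th$ then enters only through the elementary volume estimate $|\sW_\th|=\O(\b^d\th^{d-1})$, and both conclusions drop out of exact formulas rather than order-of-magnitude bookkeeping. The paper's comparison-shape argument is more pedestrian but self-contained; yours leans on the cone-invariance already exploited in Proposition~\ref{summertop prop}. Both proofs share the same implicit reliance on $m^*(\b)$ being bounded away from zero for the uniform diameter bound, which is guaranteed by the standing assumption $\b>\b_0$ in force throughout the paper.
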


\begin{proof}
For $U\in\BV$, $\sF(U)$ has order $\b\sH^{d-1}(\pd U)$ by Proposition~\ref{prop-tauq-pos}.
Choose $u_\th$ such that the cone
\[
\sU_\th:= u_\th \{\bx\in\sA_\th:x_1\le 1\}
\]
has unit volume. As $\th\to 0$, both $\sU_\th$ and $\sW_\th(1)$ have length of order $\th^{-(d-1)/d}$ in the $x_1$-direction.
By the optimality of the Wulff shape, $\sF(\sW_\th(1))\le \sF(\sU_\th)$ which has order $\b(\th u_\th)^{d-1}$. Substitute this approximation into \eqref{BcB0} and \eqref{A-def}.
\end{proof}

\subsection{Notation for Wulff shapes}

Consider the discrete analogue $\AA_\th$ of $\sA_\th$ at microscopic scale $N$ and mesoscopic scale $K$ \eqref{DD},
\[
\AA_\th:=\DD(\sA_\th,N,K),
\]
and the discrete analogues of the Wulff shape,
\begin{align*}
\WW_\th(b):=\DD(\sW_\th(b),N,K), \qq b\ge 0.
\end{align*}
For $\th\not=2\pi$, the edge boundary of $\AA_\th$ is infinite, thus the probability of finding a pattern of dilution that carves out a translation of the $\AA_\th$ anywhere in $\ZZ^d$ is zero.
Instead let $\Bmax>0$ denote a fixed, but as yet unknown, quantity.
We will limit our attention to the region $\WW_\th(\Bmax)$.
To impose free boundary conditions on the portion of the boundary of $\WW_\th(\Bmax)$ corresponding to $\pd^\star\sA_\th$, let $\QQ_\th$ denote the dilution measure $\QQ$ conditioned appropriately,
\begin{align}\label{Q-th-def}
\QQ_\th := \QQ [ \,\bcdot \mid \forall x\sim y \text{ such that } x\in\WW_\th(\Bmax) \text{ but } y\not\in\AA_\th,\  J(\{x,y\})=0 ].
\end{align}
The probability of seeing such a pattern of dilution is simply $(1-p)$ raised to the power of the number of edges that are conditioned to be closed in the definition of $\QQ_\th$.
$\sW_\th(b)$ has size order $b\th^{-(d-1)/d}$ along the $x_1$-direction and order $b\th^{1/d}$ in the directions $x_2,\dots,x_d$. The number of edges that need to be diluted is therefore order $(\Bmax \th^{-(d-1)/d})\times(\Bmax \th^{1/d})^{d-2}$.
The $\QQ$-probability of the event conditioned on in \eqref{Q-th-def} is thus
\begin{align}\label{Q-th-def-}
\exp\left(- \left(\Bmax\right)^{d-1}\th^{-1/d} \O\left(\log \tfrac{1}{1-p}\right) \middle/ h^{d-1}\right).
\end{align}
As well as Wulff shaped regions, we also need to consider Wulff `annuli': the difference between two Wulff shapes. With $0\le b_1\le b_2\le\Bmax$, let
\[
\sW_\th(b_1,b_2):= \sW_\th(b_2)\sm \sW_\th(b_1) \qtext{and} \WW_\th(b_1,b_2):= \WW_\th(b_2)\sm \WW_\th(b_1).
\]
When $\th=2\pi$, $\sW_\th(b_1,b_2)$ is annular; otherwise it is simply-connected.
There can be three parts to the boundary of $\sW_\th(b_1,b_2)$:
\begin{romlist}
\item the inner boundary $\pd\,\sW_\th(b_1)$,
\item the outer boundary $\pd\,\sW_\th(b_2)$, and
\item the free part of the boundary $\pd^\star \sA_\th \cap \pd^\star \sW_\th(b_1,b_2)$.
\end{romlist}
If $b_1=0$ then there is no inner boundary and $\WW_\th(b_1,b_2)=\WW_\th(b_2)$. If $\th=2\pi$ then the free part of the boundary is empty.

\begin{figure}
\begin{center}
\begin{picture}(0,0)%
\includegraphics{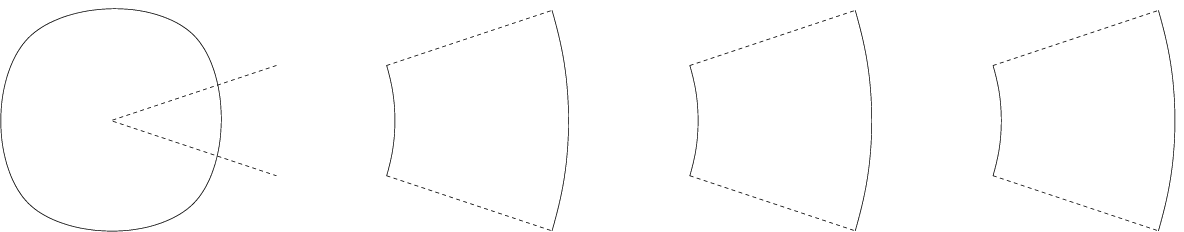}%
\end{picture}%
\setlength{\unitlength}{1160sp}%
\begingroup\makeatletter\ifx\SetFigFont\undefined%
\gdef\SetFigFont#1#2#3#4#5{%
  \reset@font\fontsize{#1}{#2pt}%
  \fontfamily{#3}\fontseries{#4}\fontshape{#5}%
  \selectfont}%
\fi\endgroup%
\begin{picture}(19197,3655)(-911,-77)
\put(3151,2054){\makebox(0,0)[lb]{\smash{{\SetFigFont{8}{9.6}{\familydefault}{\mddefault}{\updefault}$\sA_\th$}}}}
\put(6031,3134){\makebox(0,0)[lb]{\smash{{\SetFigFont{8}{9.6}{\familydefault}{\mddefault}{\updefault}f}}}}
\put(6076,119){\makebox(0,0)[lb]{\smash{{\SetFigFont{8}{9.6}{\familydefault}{\mddefault}{\updefault}f}}}}
\put(5086,1604){\makebox(0,0)[lb]{\smash{{\SetFigFont{8}{9.6}{\familydefault}{\mddefault}{\updefault}$+$}}}}
\put(8371,1649){\makebox(0,0)[lb]{\smash{{\SetFigFont{8}{9.6}{\familydefault}{\mddefault}{\updefault}$-$}}}}
\put(10036,1604){\makebox(0,0)[lb]{\smash{{\SetFigFont{8}{9.6}{\familydefault}{\mddefault}{\updefault}$-$}}}}
\put(13321,1649){\makebox(0,0)[lb]{\smash{{\SetFigFont{8}{9.6}{\familydefault}{\mddefault}{\updefault}$+$}}}}
\put(14986,1604){\makebox(0,0)[lb]{\smash{{\SetFigFont{8}{9.6}{\familydefault}{\mddefault}{\updefault}$+$}}}}
\put(18271,1649){\makebox(0,0)[lb]{\smash{{\SetFigFont{8}{9.6}{\familydefault}{\mddefault}{\updefault}$+$}}}}
\put(10981,3134){\makebox(0,0)[lb]{\smash{{\SetFigFont{8}{9.6}{\familydefault}{\mddefault}{\updefault}f}}}}
\put(11026,119){\makebox(0,0)[lb]{\smash{{\SetFigFont{8}{9.6}{\familydefault}{\mddefault}{\updefault}f}}}}
\put(15931,3134){\makebox(0,0)[lb]{\smash{{\SetFigFont{8}{9.6}{\familydefault}{\mddefault}{\updefault}f}}}}
\put(15976,119){\makebox(0,0)[lb]{\smash{{\SetFigFont{8}{9.6}{\familydefault}{\mddefault}{\updefault}f}}}}
\put(1711,1604){\makebox(0,0)[lb]{\smash{{\SetFigFont{8}{9.6}{\familydefault}{\mddefault}{\updefault}$\sW_\th$}}}}
\put(-269,749){\makebox(0,0)[lb]{\smash{{\SetFigFont{8}{9.6}{\familydefault}{\mddefault}{\updefault}$\sW_{2\pi}$}}}}
\end{picture}%
\end{center}
\caption{From left to right: $\sW_\th$ is the intersection of $\sA_\th$ with $\sW_{2\pi}$. The set $\WW_\th(b_1,b_2)$ with $(+,-)$, $(-,+)$ and $(+,+)$ boundary conditions.  The dotted lines indicate free boundary conditions.\label{fig:bc}}
\end{figure}

Let $(+,-)$ denote an Ising configuration that is equal to $+1$ on $\WW_\th(b_1)$, and equal to $-1$ on $\AA_\th\sm \WW_\th(b_2)$; see Figure~\ref{fig:bc}. We will show in Section~\ref{sec:pcld} that for $b\in[b_1,b_2]$, the probability that $\MM_K^{(+,-)}$ is close to $\sW_\th(b)$ under $\mu^{J,(+,-),0}_\L$ is approximately
\[
\exp\left(\frac{\sF(\sW_\th(b_1))-\sF(\sW_\th(b))}{h^{d-1}} \right).
\]
As well as $(+,-)$ boundary conditions, we will also consider boundary conditions of $(-,+)$, $(+,+)$ and $(-,-)$.
We may simplify $(+,+)$ to + and $(-,-)$ to $-$.

We will also need to consider some sets that only differ from $\WW_\th(b)$ and $\WW_\th(b_1,b_2)$ at the mesoscopic scale.
The sets $\WW_\th(b)$ are subsets of the discrete cone $\AA_\th$.
Let $\{x_1,x_2,\dots\}$ denote an ordering of $\AA_\th$ and let
\begin{align}\label{increasing sets}
\De_\th^n=\{x_1,\dots,x_n\}, \qq n\ge 0.
\end{align}
As a function of $b$, $|\WW_\th(b)|$ is non-decreasing. $\WW_\th(b)$ is composed of boxes of $K^d$ vertices, so $|\WW_\th(\bcdot)|$ is a step function (i.e. piece-wise constant and cadlag).
We can assume that the ordering has been chosen so that for $b\ge 0$, $\WW_\th(b)=\De_\th^{|\WW_\th(b)|}$.
Let $\WW_\th'(\bcdot)$ denote a second family of increasing subsets of $\AA_\th$ such that
\begin{romlist}
\item $\WW_\th(b)=\WW_\th'(b)$ at the points $b$ of discontinuity of $|\WW_\th(b)|$,
\item $\WW_\th(b)\subset \WW_\th'(b)$,
\item for each $n$, for some $b_\th(n)$, $\WW_\th'(b_\th(n))=\De_\th^n$.
\end{romlist}
Let $\WW_\th'(b_1,b_2)=\WW_\th'(b_2)\sm\WW_\th(b_1)$.

\subsection{High and uniformly high $\QQ_\th$-probability}
We will say that an event occurs with {\em high $\QQ_\th$-probability} if under $\QQ_\th$ it occurs with probability at least $1-C\exp(-c/\sqrt h)$.
Note that by taking $C$ large, we only have to consider $h\in(0,h_0)$ where $h_0$ can be arbitrarily small.

Given $\Bmax>0$, suppose that
\begin{align}\label{bees}
0\le b_1\le b\le b_2\le \Bmax \qtext{and} \L=\WW_\th'(b_1,b_2).
\end{align}
Given a class of events defined in terms of $b,b_1$ and $b_2$ we will say that they occur with {\em uniformly high $\QQ_\th$-probability} if each event occurs with probability at least $1-C\exp(-c/\sqrt h)$, uniformly over \eqref{bees}.
Abusing this notation, we may place some additional restriction on $b_1$ and $b_2$; for example fixing $b_1=0$. In that case interpret \eqref{bees} with the additional restriction in place.

\subsection{$L_1$-theory under $(\w,-)$ boundary conditions}

The $L_1$-theory developed in \cite{wouts-surface-tension} describes the dilute Ising model in cubes $\L=\{1,\dots,N\}^d$ under the measure $\mu^{J,-,0}_\L$, $J\sim\QQ$. The proofs in \cite{wouts-surface-tension} are easily adapted to sets of the form $\WW_\th'(b_1,b_2)$ with $J\sim\QQ_\th$. Moreover, the methodology accommodates the $(\w,-)$ boundary conditions described below.

Consider $\L$ as in \eqref{bees}. Let $(\w,-)$ denote wired boundary conditions on the inner boundary of $\L$, and minus boundary conditions on the outer boundary of $\L$. If $b_1=0$, $(\w,-)$ simply means minus boundary conditions.

This is equivalent to starting from $(+,-)$ or $(-,-)$ boundary conditions, and then replacing the inner boundary of $\L$ with a single Ising spin variable,
\begin{align*}
\mu^{J,(\w,-),0}_{\L} (\{\s\})
=\frac{1}{\sum_{s=\pm1}Z^{J,(s,-),0}_\L} \cdot\begin{cases}
\exp  \left( - \b H_{\L}^{J,(+,-),0}(\s)\right), & \s\in\Si^{(+,-)}_\L,\\
\exp  \left( - \b H_{\L}^{J,(-,-),0}(\s)\right), & \s\in\Si^{(-,-)}_\L.\\
\end{cases}
\end{align*}
This is a measure on $\Si^{(+,-)}_\L\cup\Si^{(-,-)}_\L$.
Let $\pd^\w\L$ denote the wired inner-boundary of $\L$. Let $\s(\pd^\w\L)=\pm1$ according to whether $\s\in\Si^{(+,-)}_\L$ or $\s\in\Si^{(-,-)}_\L$.
Define $\MM_K^{(\w,-)}$ to be either $\MM_K^{(+,-)}$ or $\MM_K^{(-,-)}$ according to $\s(\pd^\w\L)$.

The $(\w,-)$ measure with $h=0$ has a natural random-cluster representation with wired-inner and wired-outer boundary conditions. The corresponding coupled measure is
\begin{align}\label{random-cluster w-}
\varphi^{J,(\w,-),0}_\L(\{(\s,\om)\})=\frac{\sum_{s=\pm1} Z^{J,(s,-),0}_\L \varphi_\L^{J,(s,-),0}(\{\s,\om\})}{\sum_{s=\pm1}Z^{J,(s,-),0}_\L}.
\end{align}
Note that only the $s=\s(\pd^\w\L)$ term in the numerator of \eqref{random-cluster w-} is positive.

Let $\disconnected^{(\w,-)}_\L$ refer to the event that the inner boundary $\pd^\w\L$ and the outer boundary $\pd^-\L$ are not connected in the random-cluster representation.
For $\eps>0$, let $\disconnected_{b,\eps}$ denote the event that the inner-boundary takes the plus spin and that the phase profile is in a neighborhood of $\sW_\th(b)$,
\begin{align*}
\disconnected_{b,\eps}:=D^{(\w,-)}_\L \cap\{\s(\pd^\w\L)=1\}\cap \left\{\MM_K^{(\w,-)} \in \sV(\sW_\th(b), \eps)\right\}.
\end{align*}
Parts (i) and (ii) below follow from the proofs in \cite{wouts-surface-tension} of Proposition 3.10 and Theorem 1.11 respectively.

\begin{proposition}
\label{st-results}
Let $b\in[b_1,b_2]$ and $\epsWMbc>0$. With high $\QQ_\th$-probability:
\begin{romlist}
\item   The probability of phase coexistence is bounded below,
\begin{align*}
h^{d-1} \log \varphi^{J,(\w,-),0}_\L \left(\disconnected_{b,\epsWMbc} \right) \ge -\sF (\sW_\th(b)) -\epsWMbc.
\end{align*}
\item
The probability of phase coexistence is bounded above,
\begin{align*}
h^{d-1} \log \mu^{J,(\w,-),0}_\L \left(\int \MM_K^{(\w,-)} \dd\sL^d \ge b^d \right) \le -\sF(\sW_\th(b))+\epsWMbc.
\end{align*}
\end{romlist}
\end{proposition}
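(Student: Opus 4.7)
Both parts mirror corresponding results in \cite{wouts-surface-tension}---Proposition~3.10 for (i) and Theorem~1.11 for (ii)---and the strategy is to run those arguments on $\L=\WW_\th'(b_1,b_2)$ with $(\w,-)$ boundary conditions under $\QQ_\th$. The conditioning in \eqref{Q-th-def} forces $J(e)=0$ across every edge crossing $\pd^\star\sA_\th\cap\L$, so interfaces placed along the free portion of $\pd^\star\sW_\th(b)$ cost nothing, consistent with excluding $\pd^\star\sA_\th$ in the definition \eqref{pdU} of $\pd U$.

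For part (i) I would construct $\disconnected_{b,\epsWMbc}$ by hand. Cover a thin tubular neighborhood of $\pd\sW_\th(b)$ by almost-disjoint rectangular parallelepipeds $\sR_{L,H}(\bn,\bu_2,\dots,\bu_d)$ locally aligned with the outer normal $\bn^{\sW_\th(b)}$, so that the union of their transverse median-planes separates $\pd^\w\L$ from $\pd^-\L$ inside $\AA_\th$. Definition~\ref{def-tauJ} together with Proposition~\ref{thm-conv-tauq} yields, with high $\QQ_\th$-probability, an inner-face/outer-face disconnection in each tile at cost $\exp(-L^{d-1}(\t(\bn)+\o(1)))$. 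FKG (Holley) reduces the joint probability to a product of the individual probabilities, and a Riemann sum for \eqref{eq-def-Ftau} aggregates the local contributions to $\exp(-(\sF(\sW_\th(b))+\epsWMbc)/h^{d-1})$. Once the inner and outer boundaries are disconnected in the $h=0$ random-cluster representation, the $\pm$ symmetry gives $\s(\pd^\w\L)=+1$ with probability $1/2$; conditional on disconnection and on the plus sign, coarse graining (Proposition~\ref{prop:coarse graining}) forces $\MM_K^{(\w,-)}\in\sV(\sW_\th(b),\epsWMbc)$ with high probability.

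For part (ii), the plan is to bound the Ising probability by the corresponding random-cluster probability of a large plus region and compare it to the Wulff isoperimetric cost. Coarse graining, together with Lemma~\ref{bad boxes mixed bc} to absorb the boundary contributions coming from \eqref{random-cluster w-}, shows that outside a set of uniformly high $\QQ_\th$-probability there exists a Borel set $U\subset\sA_\th$ such that $\MM_K^{(\w,-)}$ is close in $L^1$ to $1_U$. The volume constraint transfers to $\sL^d(U)\ge b^d-\o(1)$, while the standard ``bad-box surface'' argument pairs mesoscopic surfaces of disagreement with the reduced boundary $\pd U$ to produce a lower bound $\sF(U)-\o(1)$ on the random-cluster cost. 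Proposition~\ref{summertop prop} then supplies $\sF(U)\ge\sF(\sW_\th(b))$, closing the estimate.

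The chief obstacle is the wired inner boundary. When $\s(\pd^\w\L)=+1$ the fixed plus region $\WW_\th(b_1)$ contributes $b_1^d$ to $\int\MM_K^{(\w,-)}\dd\sL^d$, so the effective BV profile to compare against is naturally a Wulff annulus rather than a simply-connected Wulff body, and one must check that the isoperimetric lower bound still yields $\sF(\sW_\th(b))$ under the constraints $U\supset\sW_\th(b_1)$ and $\sL^d(U)\ge b^d$; this follows from Proposition~\ref{summertop prop} once the inner wired boundary and the free boundary $\pd^\star\sA_\th\cap\pd^\star U$ are correctly excluded from the integral \eqref{eq-def-Ftau}. Uniformity over \eqref{bees} then follows because all constants depend only on $p,\b,\th,\Bmax$ and $\epsCG$.
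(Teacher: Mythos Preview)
Your proposal is essentially the same approach as the paper's. The paper does not give a self-contained proof of this proposition: it simply states that parts (i) and (ii) follow from the proofs of Proposition~3.10 and Theorem~1.11 in \cite{wouts-surface-tension}, respectively, once those arguments are adapted to the domains $\WW_\th'(b_1,b_2)$, the conditioned dilution $\QQ_\th$, and the $(\w,-)$ boundary conditions. Your sketch---tiling $\pd\sW_\th(b)$ by parallelepipeds and using Definition~\ref{def-tauJ}/Proposition~\ref{thm-conv-tauq} together with FKG for the lower bound, and coarse graining plus the Wulff isoperimetric inequality (Proposition~\ref{summertop prop}) for the upper bound---is exactly the content of those two results, so you have correctly reverse-engineered what the adaptation entails. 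One small simplification: the invocation of Lemma~\ref{bad boxes mixed bc} is unnecessary here, since for $h=0$ the edge marginal of $\varphi^{J,(\w,-),0}_\L$ coincides with the all-wired random-cluster measure and Proposition~\ref{prop:coarse graining} applies directly.
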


\subsection{Large deviations under $(+,-)$ boundary conditions}
\label{sec:pcld}
With $b$ and $\L$ as in \eqref{bees}, we will give upper and lower bounds for the cost of phase coexistence under mixed boundary conditions in the absence of an external magnetic field.

Under $(+,-)$ boundary conditions, the Ising measure favors the minus phase because the minus boundary is bigger. Let $\epsPMbc>0$. 
Large deviations of the magnetization $\MM_K^{(+,-)}$ defined in \eqref{def MM} away from the minus phase are controlled as follows.

\begin{proposition}[Upper bound]
\label{upper bound}
With high $\QQ_\th$-probability
\begin{align*}
h^{d-1} \log \mu^{J,(+,-),0}_\L
\left(
\int \MM_K^{(+,-)} \dd\sL^d
\ge b^d
\right)
\le \sF(\sW_\th(b_1))-\sF(\sW_\th(b))+\epsPMbc.
\end{align*}
\end{proposition}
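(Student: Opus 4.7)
The plan is to reduce this to Proposition~\ref{st-results} by directly comparing $(+,-)$ and $(\w,-)$ boundary conditions. From \eqref{random-cluster w-}, the $(\w,-)$ partition function splits as $Z^{J,(+,-),0}_\L + Z^{J,(-,-),0}_\L$ and the $(-,-)$ measure assigns zero mass to $\Si^{(+,-)}_\L$, so for any event $A\subset \Si^{(+,-)}_\L$ (in particular, any event forcing $\s(\pd^\w\L)=+1$) one obtains the identity
\begin{align*}
\mu^{J,(+,-),0}_\L(A)=\frac{\mu^{J,(\w,-),0}_\L(A)}{\mu^{J,(\w,-),0}_\L(\s(\pd^\w\L)=+1)}.
\end{align*}
The strategy is therefore to upper bound the numerator and lower bound the denominator, each by a cost of phase coexistence: the numerator by the surface tension of an $\sW_\th(b)$-droplet, the denominator by the surface tension of the given $\sW_\th(b_1)$-droplet supporting the inner wired boundary.

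First I would handle the numerator. The event $A=\{\int \MM_K^{(+,-)} \dd\sL^d\ge b^d\}$ is a subset of $\Si^{(+,-)}_\L$, and on this set $\MM_K^{(\w,-)}$ coincides with $\MM_K^{(+,-)}$ by definition, so
\begin{align*}
\mu^{J,(\w,-),0}_\L(A)\le \mu^{J,(\w,-),0}_\L\!\left(\int \MM_K^{(\w,-)}\dd\sL^d\ge b^d\right)\le \exp\!\left(\frac{-\sF(\sW_\th(b))+\epsPMbc/2}{h^{d-1}}\right)
\end{align*}
with high $\QQ_\th$-probability, by Proposition~\ref{st-results}(ii) applied with $\epsWMbc:=\epsPMbc/2$.

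Next I would lower bound the denominator via Proposition~\ref{st-results}(i) with $b$ replaced by $b_1$. Since $\disconnected_{b_1,\epsWMbc}\subset\{\s(\pd^\w\L)=+1\}$, projecting the coupled measure onto the spin marginal gives
\begin{align*}
\mu^{J,(\w,-),0}_\L(\s(\pd^\w\L)=+1)\ge \varphi^{J,(\w,-),0}_\L(\disconnected_{b_1,\epsWMbc})\ge \exp\!\left(\frac{-\sF(\sW_\th(b_1))-\epsPMbc/2}{h^{d-1}}\right)
\end{align*}
with high $\QQ_\th$-probability. Dividing and taking $h^{d-1}\log$ yields the claimed bound.

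The main (minor) obstacle is bookkeeping rather than substance: both applications of Proposition~\ref{st-results} must hold simultaneously on the same $J$, and the implicit constants in the high-probability estimates must be uniform over $b,b_1,b_2$ in the range \eqref{bees}. Intersecting the two high-$\QQ_\th$-probability events costs only a factor of two in the tail, and because Proposition~\ref{st-results} is already uniform over the relevant $\L=\WW_\th'(b_1,b_2)$ and $b\in[b_1,b_2]$, the resulting bound is uniformly high-$\QQ_\th$-probability as well.
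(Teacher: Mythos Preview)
Your proof is correct and follows essentially the same route as the paper. The only cosmetic difference is that you write the key relation directly as the identity $\mu^{J,(+,-),0}_\L(A)=\mu^{J,(\w,-),0}_\L(A)/\mu^{J,(\w,-),0}_\L(\s(\pd^\w\L)=+1)$, whereas the paper phrases it via the partition-function ratio and then passes through the random-cluster identity $\phi^{J,(\w,-),0}_\L(\disconnected^{(\w,-)}_\L)=2Z^{J,(+,-),0}_\L/\sum_s Z^{J,(s,-),0}_\L$ before applying Proposition~\ref{st-results}(i); your shortcut of using $\disconnected_{b_1,\epsWMbc}\subset\{\s(\pd^\w\L)=+1\}$ bypasses that detour and is slightly cleaner. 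One small remark on your last paragraph: the statement only asks for high (not uniformly high) $\QQ_\th$-probability, and the paper handles the uniformity separately in the proposition that follows, so you need not appeal to uniformity of Proposition~\ref{st-results} here.
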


\begin{proposition}[Lower bound]
\label{lower bound}
With high $\QQ_\th$-probability
\begin{align*}
h^{d-1} \log \mu^{J,(+,-),0}_\L
\left(
\int \MM_K^{(+,-)} \dd\sL^d
\ge b^d
\right)
\ge \sF(\sW_\th(b_1))-\sF(\sW_\th(b))-\epsPMbc.
\end{align*}
\end{proposition}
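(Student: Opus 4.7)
The plan is to transfer Proposition~\ref{st-results}(i) from $(\w,-)$ to $(+,-)$ boundary conditions via an explicit partition-function identity, and then project the random-cluster representation onto its Ising marginal. From \eqref{random-cluster w-}, for any $(\s,\om)$ with $\s(\pd^\w\L)=+1$ we have
\[
\varphi^{J,(\w,-),0}_\L(\{(\s,\om)\}) = \frac{Z^{J,(+,-),0}_\L}{Z^{J,(+,-),0}_\L+Z^{J,(-,-),0}_\L}\,\varphi^{J,(+,-),0}_\L(\{(\s,\om)\}),
\]
and since $\disconnected_{b,\epsWMbc}\subseteq\{\s(\pd^\w\L)=+1\}$ by definition, summation over this event gives
\[
\varphi^{J,(+,-),0}_\L(\disconnected_{b,\epsWMbc}) = \left[1+\frac{Z^{J,(-,-),0}_\L}{Z^{J,(+,-),0}_\L}\right]\,\varphi^{J,(\w,-),0}_\L(\disconnected_{b,\epsWMbc}).
\]
Proposition~\ref{st-results}(i) lower bounds the second factor by $\exp(-(\sF(\sW_\th(b))+\epsWMbc)/h^{d-1})$.

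The missing $\sF(\sW_\th(b_1))$ must come from the bracketed partition-function ratio. The same identity summed over all of $\{\s(\pd^\w\L)=+1\}$ gives $\mu^{J,(\w,-),0}_\L(\s(\pd^\w\L)=+1)=Z^{J,(+,-),0}_\L/(Z^{J,(+,-),0}_\L+Z^{J,(-,-),0}_\L)$. On this event the boundary spin is $+1$ throughout $\WW_\th(b_1)$, so the definition \eqref{def MM} forces $\MM_K^{(\w,-)}\equiv 1$ on every $K$-box of $\WW_\th(b_1)$. Combined with a coarse-graining bound (Proposition~\ref{prop:coarse graining}) ruling out a macroscopically negative excursion of $\MM_K$ inside $\L$, this yields the inclusion
\[
\{\s(\pd^\w\L)=+1\}\subseteq\left\{\int\MM_K^{(\w,-)}\dd\sL^d\ge b_1^d-O(\epsWMbc)\right\}
\]
up to a set of $\mu^{J,(\w,-),0}_\L$-measure at most $e^{-c/h^{d-1}}$. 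Proposition~\ref{st-results}(ii) applied at parameter $b_1$, together with the continuity of $b\mapsto\sF(\sW_\th(b))$, then bounds the right-hand side above by $\exp(-(\sF(\sW_\th(b_1))-O(\epsWMbc))/h^{d-1})$; inverting gives the desired ratio bound.

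Combining the two bounds yields
\[
\varphi^{J,(+,-),0}_\L(\disconnected_{b,\epsWMbc}) \ge \exp\!\left(\frac{\sF(\sW_\th(b_1))-\sF(\sW_\th(b))-O(\epsWMbc)}{h^{d-1}}\right),
\]
and projecting onto the Ising marginal $\mu^{J,(+,-),0}_\L$ together with the inclusion $\disconnected_{b,\epsWMbc}\subseteq\{\int\MM_K^{(+,-)}\dd\sL^d\ge b^d-O(\epsWMbc)\}$ gives the estimate with $b^d$ replaced by $b^d-O(\epsWMbc)$. A perturbation $b\mapsto b+O(\epsWMbc^{1/d})$ and the continuity of $\sF(\sW_\th(\cdot))$ restore the threshold $b^d$ at the cost of inflating the error by a bounded factor; choosing $\epsWMbc$ as a small multiple of $\epsPMbc$ yields the claim, and all invocations of Propositions~\ref{prop:coarse graining} and \ref{st-results} hold with uniformly high $\QQ_\th$-probability.

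The main obstacle is the partition-function ratio bound: one must justify that under $\mu^{J,(\w,-),0}_\L$, the event $\{\s(\pd^\w\L)=+1\}$ essentially forces $\int\MM_K^{(\w,-)}\dd\sL^d\ge b_1^d-O(\epsWMbc)$, uniformly over $(b_1,b_2)$ in \eqref{bees}. The fixed plus contribution from $\WW_\th(b_1)$ accounts for $b_1^d$ up to the discretization error inherited from \eqref{DD}, but translating this into a quantitative upper bound on $\mu^{J,(\w,-),0}_\L(\s(\pd^\w\L)=+1)$ requires the coarse-graining input to forbid large negative magnetization inside $\L$. Carrying out these two inputs uniformly in $(b_1,b_2)$, and absorbing the mesoscopic boundary errors into $\epsWMbc$, is the technical heart of the argument.
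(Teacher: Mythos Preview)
Your approach is essentially the paper's approach. Both arguments convert from $(+,-)$ to $(\w,-)$ via the partition-function ratio, lower-bound the numerator by Proposition~\ref{st-results}(i), and upper-bound the ratio via Proposition~\ref{st-results}(ii) at $b=b_1$. The paper packages the ratio step slightly differently, using the identity \eqref{prob-disconnect} (derived in the proof of Proposition~\ref{upper bound}) to write the ratio as $\tfrac12\phi^{J,(\w,-),0}_\L(\disconnected^{(\w,-)}_\L)$ rather than as $\mu^{J,(\w,-),0}_\L(\s(\pd^\w\L)=+1)$; these are of course the same quantity up to a factor of two. The paper then applies Proposition~\ref{st-results}(ii) at $b=b_1$ directly, absorbing into $\epsWMbc$ the detail you spell out explicitly (that $\{\s(\pd^\w\L)=+1\}$ only gives $\int\MM_K^{(\w,-)}\ge b_1^d-O(\epsWMbc)$ once coarse-graining rules out macroscopically negative $\MM_K$ on $\L$). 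Your version is more careful on this point; the paper treats it as routine.
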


\begin{proof}[Proof of Proposition~\ref{upper bound}]
By the definition of the $(\w,-)$ measure,
\begin{align}\label{plus-to-wired}
&\mu^{J,(+,-),0}_\L \left(\int \MM_K^{(+,-)} \dd\sL^d \ge b^d\right)\\\nonumber
&\le \mu^{J,(\w,-),0}_\L \left(\int \MM_K^{(\w,-)} \dd\sL^d \ge b^d \right)\cdot
\left(\frac{Z^{J,(+,-),0}_\L}{\sum_{s=\pm1}Z^{J,(s,-),0}_\L}\right)^{-1}.
\end{align}
Proposition~\ref{st-results} part (ii) provides an upper bound on the first term on the right-hand side of \eqref{plus-to-wired},
\[
\mu^{J,(\w,-),0}_\L \left(\int \MM_K^{(\w,-)} \dd\sL^d \ge b^d \right)\le \exp\left(-\frac{\sF(\sW_\th(b))-\epsWMbc}{h^{d-1}}\right).
\]
For $\om\in\disconnected^{(\w,-)}_\L$, the $s=+1$ and $s=-1$ terms in the numerator of the right-hand side of \eqref{random-cluster w-} are equal, so
\begin{align*}
\phi^{J,(\w,-),0}_\L(\om)
&=\frac{2Z^{J,(+,-),0}_\L\phi^{J,(+,-),0}_\L(\om)}{\sum_{s=\pm1}Z^{J,(s,-),0}_\L}.
\end{align*}
Summing over $\om$,
\begin{align}\label{prob-disconnect}
\phi^{J,(\w,-),0}_\L(\disconnected^{\w,-}_\L) = \frac{2Z^{J,(+,-),0}_\L}{\sum_{s=\pm1}Z^{J,(s,-),0}_\L}.
\end{align}
By Proposition~\ref{st-results} part (i) with $b=b_1$,
\begin{align}\label{prob-disconnect2}
\phi^{J,(\w,-),0}_\L(\disconnected^{\w,-}_\L)
\ge \varphi^{J,(\w,-),0}_\L(\disconnected_{b_1,\epsWMbc}) \ge \exp\left(-\frac{\sF(\sW_\th(b_1))+\epsWMbc}{h^{d-1}}\right).
\end{align}
Combining inequalities \eqref{prob-disconnect} and \eqref{prob-disconnect2} produces an upper bound for the second term on the right-hand side of \eqref{plus-to-wired}.
The proposition follows by taking $\epsWMbc=\epsPMbc/3$.
\end{proof}

\begin{proof}[Proof of Proposition~\ref{lower bound}]
With $\epsWMbc>0$ let $b'=\sqrt[d]{b^d+\epsWMbc}$ so that $\disconnected_{b',\epsWMbc}$ implies $\int \MM_K^{(+,-)} \dd\sL^d \ge b^d$ . By equation \eqref{prob-disconnect} and the definition of the $(\w,-)$ measure,
\begin{align*}
\varphi^{J,(+,-),0}_\L \left(\disconnected_{b',\epsWMbc} \right)
= \frac{2 \varphi^{J,(\w,-),0}_\L \left(\disconnected_{b',\epsWMbc}\right)}{ \phi^{J,(\w,-),0}_\L(\disconnected^{(\w,-)}_\L)}.
\end{align*}
Proposition~\ref{st-results} part (i) gives a lower bound for the numerator on the right-hand side. Proposition~\ref{st-results} part (ii) with $b=b_1$ gives an upper bound on the denominator on the right-hand side.
Take $\epsWMbc$ sufficiently small.
\end{proof}

\begin{proposition}
With reference to \eqref{bees}, for fixed $\epsPMbc$ both Proposition~\ref{upper bound} and Proposition~\ref{lower bound} hold with uniformly high $\QQ_\th$-probability.
\end{proposition}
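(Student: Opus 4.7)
The plan is to upgrade the pointwise statements of Propositions~\ref{upper bound} and~\ref{lower bound} to a uniform-in-$(b_1,b,b_2)$ version via a union bound over a discretization of the parameter space that is polynomial in $1/h$, then interpolate the remaining values using monotonicity and Lipschitz continuity. Since $\sF(\sW_\th(b))=b^{d-1}\sF(\sW_\th(1))$ is Lipschitz on $[0,\Bmax]$, fix $\eta>0$ independent of $h$ such that $|b-b'|\le\eta$ implies $|\sF(\sW_\th(b))-\sF(\sW_\th(b'))|\le\epsPMbc/3$, and let $G_b:=\{k\eta : 0\le k\le\lceil\Bmax/\eta\rceil\}\cap[0,\Bmax]$ be the resulting fixed-size grid for $b$. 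For the boundary parameters, let $G_\pd:=\{b_\th(n):0\le n\le|\WW_\th(\Bmax)|\}$ be the full set of breakpoints of the step function $\WW_\th'(\cdot)$, which has cardinality $\O(h^{-d})$. Note that $G_\pd$ contains all discontinuities of both $\WW_\th(\cdot)$ and $\WW_\th'(\cdot)$.

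First I would apply Propositions~\ref{upper bound} and~\ref{lower bound} with slack $\epsPMbc/3$ at every triple $(b_1^*,b^*,b_2^*)\in G_\pd\times G_b\times G_\pd$ satisfying $b_1^*\le b^*\le b_2^*$. There are $\O(h^{-2d})$ such triples, and each bound fails with $\QQ_\th$-probability at most $C\exp(-c/\sqrt h)$; the union bound yields total failure probability at most $Ch^{-2d}\exp(-c/\sqrt h)\le C'\exp(-c'/\sqrt h)$ for any $c'<c$. Next, for a general triple $(b_1,b,b_2)$ satisfying \eqref{bees}, the inclusion of all breakpoints in $G_\pd$ produces $b_1^*,b_2^*\in G_\pd$ with $\WW_\th(b_1^*)=\WW_\th(b_1)$ and $\WW_\th'(b_2^*)=\WW_\th'(b_2)$, so the domain $\L$, the $(+,-)$ boundary conditions, and hence the entire measure $\mu^{J,(+,-),0}_\L$ coincide with those of the grid triple. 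For the upper bound let $b^*\in G_b$ be the largest grid point with $b^*\le b$; the event at threshold $b^d$ is contained in that at threshold $(b^*)^d$, so the grid bound combined with
\[
|\sF(\sW_\th(b_1^*))-\sF(\sW_\th(b_1))|+|\sF(\sW_\th(b^*))-\sF(\sW_\th(b))|\le 2\epsPMbc/3
\]
produces the desired inequality with full slack $\epsPMbc$. The lower bound is analogous with $b^*\in G_b$ the smallest grid point satisfying $b^*\ge b$.

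The only subtlety is confirming that the polynomial union-bound cost $h^{-2d}$ is absorbed by the stretched exponential $\exp(-c/\sqrt h)$. This is precisely why ``high $\QQ_\th$-probability'' was defined with the slow rate $\sqrt h$ rather than $h$: any inverse polynomial in $h$ is absorbed by slightly shrinking $c$. No new probabilistic input is required beyond Propositions~\ref{upper bound} and~\ref{lower bound} and the elementary Lipschitz continuity of the surface-tension integrand.
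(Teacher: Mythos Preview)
Your argument is correct and follows the same broad outline as the paper's---discretize the parameter space, apply a union bound, then interpolate via Lipschitz continuity of $b\mapsto\sF(\sW_\th(b))$---but the implementation of the discretization is genuinely different.

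The paper discretizes \emph{all three} parameters on a fixed $\eps$-grid independent of $h$, so the union bound runs over a bounded number of events. Because a grid point $(b_1',b_2')$ does not in general reproduce the domain $\WW_\th'(b_1,b_2)$ exactly, the paper must appeal to the stochastic monotonicity of $\mu^{J,(+,-),0}_\L$ in $b_1$ and $b_2$, and must further shift the threshold (the correction $\sqrt[d]{b^d-C\eps(\Bmax)^{d-1}}$) to absorb the change in $\MM_K^{(+,-)}$ induced by the domain perturbation. You instead put $b_1,b_2$ on the full breakpoint set $G_\pd$, of size $\O(h^{-d})$, so that the domain, the boundary condition, the measure, and the profile $\MM_K^{(+,-)}$ all coincide exactly with those of a grid triple; only the threshold $b$ needs monotone interpolation on a fixed $\eta$-grid. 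This sidesteps both the stochastic-monotonicity step and the threshold correction, at the cost of a union bound over $\O(h^{-2d})$ events---harmless since, as you point out, any polynomial prefactor is absorbed by the stretched-exponential rate $\exp(-c/\sqrt h)$. Both routes are valid; yours trades a structural observation about the model for a heavier but entirely elementary union bound.
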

\begin{proof}
Consider Proposition~\ref{upper bound}; the case of Proposition~\ref{lower bound} follows similarly.
Controlling uniformly the $\QQ_\th$-probability can be reduced to the control of a finite number of events.
Let $M(b_1,b_2,b)$ denote the increasing event $\left\{\s:\int \MM_K^{(+,-)} \dd\sL^d \ge b^d\right\}$ with $\MM_K^{(+,-)}$ defined with respect to $\WW_\th'(b_1,b_2)$.

The measure $\mu^{J,(+,-),h}_\L$ is stochastically increasing with $b_1$ and $b_2$.
Let $C,\eps>0$ and let
\[
b_1'=\eps\lceil b_1/\eps\rceil, \q b_2'=\eps\lceil b_2/\eps\rceil,\q b'=\eps\left\lfloor \eps^{-1}\sqrt[d]{b^d-C\eps(\Bmax)^{d-1}} \ \right\rfloor.
\]
For triples $(b_1,b_2,b)$ satisfying \eqref{bees}, the triple $(b_1',b_2',b')$ takes a finite number of values.
With high $\QQ_\th$-probability,
\[
h^{d-1} \log \mu^{J,(+,-),0}_{\WW_\th'(b_1',b_2')}
\left(
M(b_1',b_2',b')
\right)
\le \sF(\sW_\th(b_1'))-\sF(\sW_\th(b'))+\frac\epsPMbc2.
\]
$C$ can be chosen such that if $\sF(\sW_\th(b_1))-\sF(\sW_\th(b))+\epsPMbc<0$ and $\eps$ is sufficiently small then $M(b_1,b_2,b) \implies M(b_1',b_2',b')$ and
\[
\sF(\sW_\th(b_1')) -\sF(\sW_\th(b')) +\frac\epsPMbc2\le \sF(\sW_\th(b_1))-\sF(\sW_\th(b))+\epsPMbc. \qedhere
\]
\end{proof}

\section{Spatial and Markov chain mixing}
\label{sec:technical}

We will describe the dilute Ising model at equilibrium under mixed boundary conditions and a magnetic field.
These results will be used in the next section to show that sufficiently large plus droplets spread in a predictable way.

\subsection{Stability of minimum energy profiles}\label{sec:stab}
Consider the Ising measure with a magnetic field $h$ and $(+,-)$ boundary conditions.
Recall the definitions of the energy functions $\sE$ \eqref{def sE} and $\mathsf{E}^\th$ \eqref{def phi-sW}.
With reference to \eqref{bees}, consider the case $\mathsf{E}^\th(b_1)>\mathsf{E}^\th(b_2)$.
The minimum value of $\mathsf{E}^\th(b)$ is attained when $b=b_2$.
Geometrically, this means that the profile $U$ with $\sW_\th(b_1)\subset U\subset \sW_\th(b_2)$ that minimizes $\sE(U)$ is $\sW_\th(b_2)$.
The minimizer is unique and stable.

\begin{proposition}
\label{lem:stability}
Let $\epsStab\in(0,1)$ such that $b_1< b_2(1-\epsStab)$ and $\mathsf{E}^\th(b_1)>\mathsf{E}^\th(b_2(1-\epsStab))$.
There is a constant $\energyStabilityConstant=\energyStabilityConstant(\epsStab)>0$, independent of $\Bmax$, such that for profiles $U\in\BV$,
\begin{align}
\label{eq:stability}
\sL^d(U)\in[b_1^d,b_2^d (1-\epsStab)^d] \implies \sE(U) \ge \sE(\sW_\th(b_2))+2b_2 \energyStabilityConstant.
\end{align}
Given $\epsStab$, with uniformly high $\QQ_\th$-probability,
\begin{align}
\label{nu bound}
\mu^{J,(+,-),h}_\L \left(\int \MM_K^{(+,-)} \dd \sL^d \le b_2^d (1-\epsStab)^d \right)\le \exp\left( - \frac{b_2\energyStabilityConstant}{h^{d-1}}\right).
\end{align}
\end{proposition}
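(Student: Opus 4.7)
The statement naturally splits into an isoperimetric-type inequality (the deterministic bound \eqref{eq:stability}) and a probabilistic bound (\eqref{nu bound}) that uses it. My plan is to address them in that order, the first by a Wulff-theoretic computation and the second by combining a magnetic-field tilt with Propositions~\ref{upper bound} and \ref{lower bound}.

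For \eqref{eq:stability}, Proposition~\ref{summertop prop} immediately gives $\sF(U)\ge \sF(\sW_\th(v^{1/d}))$ for $U\in\BV$ with $v=\sL^d(U)$, hence $\sE(U)\ge \mathsf{E}^\th(v^{1/d})$. Since $\mathsf{E}^\th$ is unimodal (strictly increasing on $[0,\Bcritical]$, strictly decreasing afterwards), its minimum over $v^{1/d}\in[b_1,b_2(1-\epsStab)]$ sits at one of the endpoints, and the hypothesis $\mathsf{E}^\th(b_1)>\mathsf{E}^\th(b_2(1-\epsStab))$ pins it to $b_2(1-\epsStab)$. What then remains is $\mathsf{E}^\th(b_2(1-\epsStab))-\mathsf{E}^\th(b_2)\ge 2b_2\energyStabilityConstant$, uniformly in the allowed range of $b_2$; the same hypothesis forces $b_2(1-\epsStab)>\Bcritical$, so $\mathsf{E}^\th$ is strictly decreasing on $[b_2(1-\epsStab),b_2]$, and I would check by elementary calculus on the explicit expression $\mathsf{E}^\th(b)=b^{d-1}\sF(\sW_\th(1))-\b m^* b^d$ that the quotient $[\mathsf{E}^\th(b_2(1-\epsStab))-\mathsf{E}^\th(b_2)]/b_2$ is positive and nondecreasing on $b_2\in[\Bcritical/(1-\epsStab),\infty)$, with value of order $\epsStab^2$ at the left endpoint. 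This is how $\energyStabilityConstant=\energyStabilityConstant(\epsStab)$, independent of $\Bmax$, is extracted.

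For \eqref{nu bound}, set $V(\s):=\int \MM_K^{(+,-)}\dd\sL^d$ and $n^-(\s):=\#\{x\in\L:\s(x)=-1\}$; a direct count gives $n^-=C_1-m^*N^d V$ with $C_1$ constant in $\s$. The tilt identity
\[
\mu^{J,(+,-),h}_\L(A)=\frac{\EE_{\mu^{J,(+,-),0}_\L}[1_A\,e^{-\b h n^-}]}{\EE_{\mu^{J,(+,-),0}_\L}[e^{-\b h n^-}]}
\]
with $A=\{V\le b_2^d(1-\epsStab)^d\}$ lets me control $\mu^h(A)$ using the $h=0$ bounds. I would partition $[b_1,b_2(1-\epsStab)]$ into a grid of width $\delta$ and bound each sub-slab $\{V\in[b_i^d,b_{i+1}^d]\}$ in the numerator by Proposition~\ref{upper bound} applied at level $b_i$ combined with the tilt factor at level $b_{i+1}$; the denominator I would bound from below by restricting to $B=\{V\ge b_2^d(1-\eps)^d\}$ for a small $\eps\in(0,\epsStab)$ and invoking Proposition~\ref{lower bound} with its matching tilt factor. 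The common $\sF(\sW_\th(b_1))$ contributions cancel in the ratio, and the remaining surface-tension and magnetization terms reassemble into $\mathsf{E}^\th(b_2(1-\eps))-\mathsf{E}^\th(b_i)+\O(\delta)$ per cell. By the endpoint pinning from the previous paragraph the worst cell is $b_i=b_2(1-\epsStab)$, and sending $\eps,\delta\to 0$ and invoking \eqref{eq:stability} produces an exponent of $-2b_2\energyStabilityConstant/h^{d-1}$, of which only half needs to be spent to absorb the polynomial prefactor $\O(1/\delta)$ from the grid and the $\epsPMbc$, $\eps$, $\delta$ corrections (using that $b_2\ge \Bcritical/(1-\epsStab)$ is bounded away from zero). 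Uniformity over \eqref{bees} descends from the uniform high $\QQ_\th$-probability version of Propositions~\ref{upper bound} and \ref{lower bound} stated at the end of Section~\ref{sec:pcld}.

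The main obstacle will be bookkeeping the exponent. The naive bound $\mu^0(A)\le 1$ would leave a stray $-\sF(\sW_\th(b_1))$ that is too weak to close the estimate; the required cancellation forces me to apply Propositions~\ref{upper bound} and \ref{lower bound} at the same inner-boundary level $b_1$, which is the reason the grid decomposition of the numerator and the choice of the reference event $B$ in the denominator have to be done in a coordinated way, with matching tilt factors. A secondary difficulty, and the reason for the monotonicity computation in the first step, is ensuring that $\energyStabilityConstant$ does not degenerate as $b_2$ ranges over $[\Bcritical/(1-\epsStab),\infty)$, i.e.\ that the constant is genuinely independent of $\Bmax$.
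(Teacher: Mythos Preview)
Your proposal is correct and follows essentially the same route as the paper. For \eqref{eq:stability} the paper likewise reduces to Wulff shapes via Proposition~\ref{summertop prop}, pins the minimum to $b=b_2(1-\epsStab)$ using unimodality and the hypothesis, and then extracts an explicit $\energyStabilityConstant$ of order $\epsStab^2$ from the formula for $\mathsf{E}^\th$; for \eqref{nu bound} it uses the same tilt-and-slice argument, writing the Radon--Nikodym derivative via a shifted spin count $S$ (equivalent to your $n^-$ up to an additive constant), lower-bounding the normalizer by Proposition~\ref{lower bound} at $b=b_2$, slicing the numerator on a grid of width $\epsPMbc$ with Proposition~\ref{upper bound}, and closing with \eqref{eq:stability}. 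One small caveat: your identity $n^-=C_1-m^*N^dV$ is only exact up to an $O(K/N)$ boundary correction (the paper records this explicitly), but this is harmless at the exponential scale.
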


\begin{proof}
We will first show inequality \eqref{eq:stability} with
\[
\energyStabilityConstant:=\frac{(d-1) \sF(\sW_\th(1)) \left(\Bcritical\right)^{d-2} \eps^2 }{ 4d^2 \sqrt[d]{1-\eps}} \qtext{where} \eps:=1-(1-\epsStab)^d.
\]
By the optimality of the Wulff shape, Proposition~\ref{summertop prop}, we can assume that $U=\sW_\th(b)$ for some $b$.
The unique maximum of $\mathsf{E}^\th$ occurs at $\Bcritical$. As $\mathsf{E}^\th(b_1)>\mathsf{E}^\th(b_2(1-\epsStab))$ the function $\mathsf{E}^\th$ must be decreasing in the region $[b_2(1-\epsStab),b_2]$; it is optimal to consider
\begin{align}\label{b-nu}
b=b_2(1-\epsStab)\ge \Bcritical.
\end{align}
With $b$ as above,
\begin{align}\label{phiphi}
\mathsf{E}^\th(b)-\mathsf{E}^\th(b_2) &= \sF(\sW_\th(1))(b^{d-1}-b_2^{d-1}) - \b m^*(b^d-b_2^d) \\
                  &=\sF(\sW_\th(1))b_2^{d-1}((1-\eps)^{(d-1)/d}-1) + \b m^* \eps b_2^d.\nonumber
\end{align}
By \eqref{BcB0} and \eqref{b-nu},
\begin{align}\label{bc-nu}
\b m^*\eps b_2^d \ge \b m^* \eps \frac{b_2^{d-1} \Bcritical}{\sqrt[d]{1-\eps}} = \sF(\sW_\th(1)) b_2^{d-1}  \frac{\eps}{\sqrt[d]{1-\eps}} \frac{d-1}d.
\end{align}
Let $f(\eps)=(1-\eps)-\sqrt[d]{1-\eps}$. For $\eps\in(0,1)$, $f'''(\eps)>0$ so
\begin{align*}
f(\eps)-f(0)-\eps f'(0) \ge f''(0) \eps^2/2,\q \eps\in[0,1],
\end{align*}
or more explicitly
\begin{align}\label{nu ineq}
(1-\eps)-\sqrt[d]{1-\eps}+\eps\frac{d-1}d \ge \eps^2\cdot \frac{d-1}{2d^2}, \q \eps\in[0,1].
\end{align}
Substituting \eqref{bc-nu} into \eqref{phiphi} and then using \eqref{nu ineq} gives
\begin{align*}
\mathsf{E}^\th(b)-\mathsf{E}^\th(b_2)\ge \sF(\sW_\th(1)) b_2^{d-1} \cdot \frac{ \eps^2 }{\sqrt[d]{1-\eps}} \cdot \frac{d-1}{2d^2} \ge 2b_2\energyStabilityConstant
\end{align*}
as required.

We will now show inequality \eqref{nu bound}.
Let $S$ count, up to an additive constant, the number of plus spins in $\L$,
\[
S:=\sum_{x\in\L} \frac{\s(x)+m^*}2.
\]
By the definition of $\L$, $\MM_K^{(+,-)}$ and $S$,
\[
\left|m^*b_1^d+h^d S - m^* \int \MM_K^{(+,-)} \dd \sL^d\right|=\O(K/N).
\]
The magnetic field corresponds to a Radon-Nikodym derivative controlled by $S$.
With $Z:=\mu^{J,(+,-),0}_\L\left[\exp\left(\b h S  \right)\right]$,
\begin{align*}
\frac{\mu^{J,(+,-),h}_\L( \{\s\} )}{\mu^{J,(+,-),0}_\L(\{\s\})}
= \frac{1}{Z}\exp\left(\b h S \right).
\end{align*}
We will find a lower bound on $Z$.
When $h$ is sufficiently small,
\[
\MM_K^{(+,-)} \in \sV(\sW_\th(b),\epsPMbc) \implies |h^{d}\, S - m^*(b^d-b_1^d)| =\O(\epsPMbc).
\]
Applying Proposition~\ref{lower bound} with $b=b_2$,
\begin{align*}
h^{d-1} \log Z
&\ge h^{d-1} \log \mu^{J,(+,-),0}_\L (\MM_K^{(+,-)} \in \sV(\sW_\th(b_2),\epsPMbc))+\b m^* ( b_2^d-b_1^d)  -\O(\epsPMbc)\\
&\ge \sF(\sW_\th(b_1))-\sF(\sW_\th(b_2))+\b m^* ( b_2^d-b_1^d) -\O(\epsPMbc)\\
&\ge \sE(\sW_\th(b_1))-\sE(\sW_\th(b_2))-\O(\epsPMbc).
\end{align*}
Let $n=\lfloor (b_2(1-\epsStab)-b_1)/\epsPMbc\rfloor$. We can write
\begin{align*}
&\left\{\s:\int \MM_K^{(+,-)} \dd \sL^d \le  b_2^d (1-\epsStab)^d \right\}\subset\\
&\bigcup_{b\in\{b_1,b_1+\epsPMbc,\dots,b_1+\epsPMbc n\}} \left\{\s:\int \MM_K^{(+,-)} \dd \sL^d \in[b^d,(b+\epsPMbc)^d] \right\}.
\end{align*}
By Proposition~\ref{upper bound}, for $b\in\{b_1,b_1+\epsPMbc,\dots,b_1+\epsPMbc n\}$,
\begin{align*}
&\mu^{J,(+,-),0}_\L\left[\exp  \left( \b h S \right);\int \MM_K^{(+,-)} \dd\sL^d\in[b^d,(b+\epsPMbc)^d ]\right]\\
&\le \exp\left(h^{1-d}  \left[\sF(\sW_\th(b_1))-\sF(\sW_\th(b))+ \b m^*   [(b+\epsPMbc)^d-b_1^d]+ \O(\epsPMbc) \right] \right)\\
&\le \exp\left(h^{1-d}  \left[\sE(\sW_\th(b_1))-\sE(\sW_\th(b))+ \O(\epsPMbc)\right]\right).
\end{align*}
The left-hand side of \eqref{nu bound} is therefore at most
\[
\sum_{b\in\{b_1,b_1+\epsPMbc,\dots,b_1+\epsPMbc n\}} \exp\left(h^{1-d} [\sE(\sW_\th(b_2))-\sE(\sW_\th(b))
+\O(\epsPMbc)
]\right).
\]
Taking $\epsPMbc$ small with respect to $\epsStab$, inequality \eqref{nu bound} follows by \eqref{eq:stability}.
\end{proof}

Consider now the case $\mathsf{E}^\th(b_1)<\mathsf{E}^\th(b_2)$.
The optimum profile matching $(+,-)$ boundary conditions is $\sW_\th(b_1)$ so the minus phase is dominant.

\begin{proposition}
\label{lem:stability2}
Let $\epsStab>0$. Suppose that $\mathsf{E}^\th\left(b_1+\epsStab\right)< \mathsf{E}^\th(b_2)$. There is a constant $\energyStabilityConstant'=\energyStabilityConstant'(\epsStab)>0$ such that with uniformly high $\QQ_\th$-probability
\begin{align*}
\mu^{J,(+,-),h}_\L \left(\int \MM_K^{(+,-)} \dd \sL^d \ge (b_1 +\epsStab)^d \right)\le \exp\left(- \frac{\energyStabilityConstant'}{h^{d-1}}\right).
\end{align*}
\end{proposition}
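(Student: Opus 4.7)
The plan is to mirror the proof of Proposition~\ref{lem:stability} with the roles of the inner and outer Wulff shapes interchanged: under the hypothesis $\mathsf{E}^\th(b_1+\epsStab)<\mathsf{E}^\th(b_2)$ the minimum of $\mathsf{E}^\th$ over $[b_1,b_2]$ is attained at $b_1$, so the plus droplet $\sW_\th(b_1)$ dictated by the inner boundary is the thermodynamically favored profile and $\MM_K^{(+,-)}$ should concentrate near $\sW_\th(b_1)$. The argument splits naturally into an energetic stability step followed by a Radon--Nikodym/slicing step.

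For the stability step, the claim is that for every $U\in\BV$ with $\sW_\th(b_1)\subset U\subset\sW_\th(b_2)$ and $\sL^d(U)\ge(b_1+\epsStab)^d$ one has $\sE(U)\ge\mathsf{E}^\th(b_1)+2\energyStabilityConstant'$ for some $\energyStabilityConstant'=\energyStabilityConstant'(\epsStab)>0$ independent of $\Bmax$. By the optimality of the Wulff shape (Proposition~\ref{summertop prop}) it suffices to check $U=\sW_\th(b)$ with $b\in[b_1+\epsStab,b_2]$, i.e.\ to prove $\mathsf{E}^\th(b)-\mathsf{E}^\th(b_1)\ge2\energyStabilityConstant'$. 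From \eqref{def phi-sW} the function $\mathsf{E}^\th$ is strictly increasing on $[0,\Bcritical]$ and strictly decreasing on $[\Bcritical,\infty)$, so the hypothesis forces $b_1+\epsStab\le\Bcritical$ (otherwise $\mathsf{E}^\th$ would be strictly decreasing on $[b_1+\epsStab,b_2]$, contradicting the hypothesis). Hence $\mathsf{E}^\th(b)\ge\mathsf{E}^\th(b_1+\epsStab)>\mathsf{E}^\th(b_1)$ for all $b\in[b_1+\epsStab,b_2]$, and a Taylor expansion of the polynomial \eqref{def phi-sW}, analogous to \eqref{nu ineq} in the proof of Proposition~\ref{lem:stability}, yields an explicit quantitative constant $\energyStabilityConstant'(\epsStab)$ uniformly in $b_1\in[0,\Bcritical-\epsStab]$.

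For the probabilistic step I would repeat the computation in the proof of Proposition~\ref{lem:stability}. Set $S=\sum_{x\in\L}(\s(x)+m^*)/2$, so that $\mu^{J,(+,-),h}_\L$ has Radon--Nikodym derivative $Z^{-1}\exp(\b h S)$ with respect to $\mu^{J,(+,-),0}_\L$ where $Z=\mu^{J,(+,-),0}_\L[\exp(\b h S)]$, and recall $|h^dS-m^*(\int\MM_K^{(+,-)}\dd\sL^d-b_1^d)|=\O(K/N)$. Lower-bound $Z$ by restricting to $\MM_K^{(+,-)}\in\sV(\sW_\th(b_1),\epsPMbc)$ and invoking Proposition~\ref{lower bound} at $b=b_1$, giving $h^{d-1}\log Z\ge-\O(\epsPMbc)$. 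For the numerator, decompose $\{\int\MM_K^{(+,-)}\dd\sL^d\ge(b_1+\epsStab)^d\}$ into slices $\{\int\MM_K^{(+,-)}\dd\sL^d\in[b^d,(b+\epsPMbc)^d]\}$ over an $\epsPMbc$-grid in $[b_1+\epsStab,b_2]$; on each slice $\exp(\b h S)\le\exp([\b m^*(b^d-b_1^d)+\O(\epsPMbc)]/h^{d-1})$ and the slice probability is controlled by Proposition~\ref{upper bound}, giving a per-slice contribution $\exp([\sE(\sW_\th(b_1))-\sE(\sW_\th(b))+\O(\epsPMbc)]/h^{d-1})$. Summing over the polynomially many slices and inserting the stability bound produces the total $\exp(-(2\energyStabilityConstant'-\O(\epsPMbc))/h^{d-1})$, and taking $\epsPMbc$ small compared to $\energyStabilityConstant'$ yields the claim. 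The main obstacle is really the first step: verifying that $\energyStabilityConstant'$ can be chosen uniformly in all admissible $(b_1,b_2)$ and in particular independently of $\Bmax$; as sketched, this follows from the strict monotonicity of $\mathsf{E}^\th$ on $[0,\Bcritical]$ and the explicit polynomial form \eqref{def phi-sW} restricted to the range $b_1\in[0,\Bcritical-\epsStab]$ imposed by the hypothesis.
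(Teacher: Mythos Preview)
Your proposal is correct and follows exactly the route the paper intends: the paper omits the proof entirely, stating only that it is ``similar to the proof of Proposition~\ref{lem:stability}'', and your adaptation---swapping the roles of $b_1$ and $b_2$, deducing $b_1+\epsStab<\Bcritical$ from the hypothesis, and repeating the Radon--Nikodym/slicing computation with the lower bound on $Z$ now coming from the neighborhood of $\sW_\th(b_1)$---is precisely that adaptation. The uniformity of $\energyStabilityConstant'$ in $(b_1,b_2)$ (and its independence from $\Bmax$) indeed follows because the hypothesis confines $b_1$ to the compact interval $[0,\Bcritical-\epsStab]$ on which $\mathsf{E}^\th(b_1+\epsStab)-\mathsf{E}^\th(b_1)$ is continuous and strictly positive; no Taylor expansion is actually needed, though it does no harm.
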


We will omit the proof of Proposition~\ref{lem:stability2} as it is similar to the proof of Proposition~\ref{lem:stability}.

Now consider the case $b_1=0$ and $b_2>\Bcritical$ under minus boundary conditions. In order to get a stability property that does not depend on the sign of $\mathsf{E}^\th(b_2)$ we will condition on seeing a large region of the plus-phase. With $\epsC>0$ let
\begin{align}\label{mu hat}
\hm_\L&:=\mu^{J,-,h}_\L ( \bcdot \mid \sC) \q \text{where} \\
\sC=\sC(\epsC)&:=\left\{\s\in\Si_\L^-:\int_{\sW_\th(\Bcritical+\epsC)} \MM_K^{-}\dd \sL^d \ge (\Bcritical)^d \right\}.\nonumber
\end{align}
\begin{proposition}
\label{lem:stabilityR}
Let $\epsStab,\epsC>0$. Suppose $b_1=0$ and $b_2(1-\epsStab)> \Bcritical+\epsC$.
Recall $\energyStabilityConstant$ from Proposition~\ref{lem:stability}. Given $\epsStab$ and $\epsC$, with uniformly high $\QQ_\th$-probability,
\begin{align*}
\hm_\L \left(\int \MM_K^{-} \dd \sL^d \le b_2^d (1-\epsStab)^d \right)\le \exp \left(- \frac{b_2\energyStabilityConstant}{h^{d-1}}\right).
\end{align*}
\end{proposition}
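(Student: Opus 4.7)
The plan is to mirror the proof of Proposition~\ref{lem:stability}, using the conditioning event $\sC$ as a substitute for the inner plus boundary of the $(+,-)$ setup. Writing $A:=\{\int\MM_K^-\dd\sL^d\le b_2^d(1-\epsStab)^d\}$ for the target event and $S:=\sum_{x\in\L}(\s(x)+m^*)/2$, the partition-function normalizer cancels in
\begin{align*}
\hm_\L(A)\;=\;\frac{\mu^{J,-,0}_\L[\exp(\b h S);\, A\cap\sC]}{\mu^{J,-,0}_\L[\exp(\b h S);\, \sC]},
\end{align*}
so it is enough to bound the numerator from above and the denominator from below, using the $L_1$ estimates of Section~\ref{sec:pcld} specialized to $b_1=0$ (where $\L=\WW_\th'(b_2)$ and the $(+,-)$ condition reduces to pure minus boundary conditions).

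The key geometric input is that $A\cap\sC$ forces the total plus-phase volume to lie in $[(\Bcritical)^d,\,b_2^d(1-\epsStab)^d]$: the lower bound comes from $\sC$ (at least $(\Bcritical)^d$ worth of plus phase sits inside $\sW_\th(\Bcritical+\epsC)$), the upper bound from $A$. By the Wulff optimality of Proposition~\ref{summertop prop}, any $U\in\BV$ with a volume in this range satisfies $\sE(U)\ge\mathsf{E}^\th(b)$ for some $b\in[\Bcritical,b_2(1-\epsStab)]$. Since $\mathsf{E}^\th$ peaks at $\Bcritical$ and is strictly decreasing on $[\Bcritical,\infty)$, the minimum over this interval is $\mathsf{E}^\th(b_2(1-\epsStab))$, which by the computation already carried out in the proof of Proposition~\ref{lem:stability} is at least $\mathsf{E}^\th(b_2)+2b_2\energyStabilityConstant$.

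The numerator is then estimated by discretizing $A\cap\sC$ into shells $\{\int\MM_K^-\dd\sL^d\in[b^d,(b+\epsPMbc)^d]\}$ for $b$ running through an $\epsPMbc$-grid of $[\Bcritical,b_2(1-\epsStab)]$; on each shell the Radon--Nikodym weight satisfies $\b h S\le\b m^*(b+\epsPMbc)^d/h^{d-1}+\O(\epsPMbc/h^{d-1})$ via the relation $|h^d S-m^*\int\MM_K^-\dd\sL^d|=\O(K/N)$, and Proposition~\ref{upper bound} controls the unweighted $\mu^{J,-,0}_\L$-probability, producing a per-shell bound of the form $\exp(-(\sE(\sW_\th(b))-\O(\epsPMbc))/h^{d-1})$. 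For the denominator, the inclusion $\sC\supset\{\MM_K^-\in\sV(\sW_\th(b_2),\eps)\}$ for sufficiently small $\eps$ (which uses $\sW_\th(\Bcritical+\epsC)\subset\sW_\th(b_2)$, guaranteed by $b_2(1-\epsStab)>\Bcritical+\epsC$) combined with $\b h S\ge\b m^* b_2^d/h^{d-1}-\O(\eps/h^{d-1})$ on that neighborhood and the lower bound on $\mu^{J,-,0}_\L(\MM_K^-\in\sV(\sW_\th(b_2),\eps))$ already used in the proof of Proposition~\ref{lem:stability} gives $\mu^{J,-,0}_\L[\exp(\b h S);\sC]\ge\exp(-(\sE(\sW_\th(b_2))+\O(\eps))/h^{d-1})$. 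Taking the ratio cancels the $\sE(\sW_\th(b_2))$ terms and leaves a polynomial number of contributions of size $\exp(-(2b_2\energyStabilityConstant-\O(\epsPMbc))/h^{d-1})$, which for $\epsPMbc$ small compared to $\energyStabilityConstant$ yields the claimed bound. Uniformity in $(b_1,b_2)$ is handled as in the last proposition of Section~\ref{sec:pcld}, by reducing to a finite $\epsPMbc$-grid and using monotonicity. The main obstacle is bookkeeping the various $\O$-errors and verifying the inclusion $\sC\supset\{\MM_K^-\in\sV(\sW_\th(b_2),\eps)\}$ from the $\sL^d$-closeness, but these are routine once one observes that $L_1$-closeness of profiles transfers to $\sL^d$-closeness on the sub-region $\sW_\th(\Bcritical+\epsC)$.
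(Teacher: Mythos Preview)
Your proposal is correct and follows essentially the same approach as the paper's proof: both reduce to a ratio of an upper bound on $\mu^{J,-,h}_\L(\{\int\MM_K^-\in[(\Bcritical)^d,b_2^d(1-\epsStab)^d]\})$ and a lower bound on $\mu^{J,-,h}_\L(\sC)$, invoking the same energy gap $\mathsf{E}^\th(b_2(1-\epsStab))-\mathsf{E}^\th(b_2)\ge 2b_2\energyStabilityConstant$ borrowed from the proof of Proposition~\ref{lem:stability}. The only cosmetic differences are that the paper works with $\mu^{J,-,h}_\L$ directly (introducing $a=\min\{0,\mathsf{E}^\th(b_2)\}$ to track the normalizer) rather than the Radon--Nikodym ratio over $\mu^{J,-,0}_\L$, and for the denominator the paper uses the event $\{\int\MM_K^-\dd\sL^d\in[b^d,b_2^d]\}$ with $b$ chosen so that $b_2^d-b^d\le(\Bcritical+\epsC)^d-(\Bcritical)^d$, whereas you use $\{\MM_K^-\in\sV(\sW_\th(b_2),\eps)\}$; your choice makes the inclusion in $\sC$ slightly more transparent.
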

\begin{proof}
The proof of \eqref{eq:stability} (with $\Bcritical$ playing the role of $b_1$) implies that for all $U\in \BV$,
\begin{align*}
\sL^d(U)\in [(\Bcritical)^d,b_2^d (1-\epsStab)^d] \implies \sE(U) \ge \sE(\sW_\th(b_2))+2b_2 \energyStabilityConstant.
\end{align*}
Therefore $\mathsf{E}^\th(b_2 (1-\epsStab))-\mathsf{E}^\th(b_2)\ge 2b_2\energyStabilityConstant$.

Returning to the context of Proposition~\ref{lem:stabilityR}, we have $b_1=0$.
Let $a=\min\{0,\mathsf{E}^\th(b_2)\}$ denote the minimum of $\mathsf{E}^\th(b)$ for $b\in[b_1,b_2]$.
Let $b$ denote the minimum value in the range $[b_2(1-\epsStab),b_2]$ such that
\[
b_2^d-b^d \le (\Bcritical+\epsC)^d-(\Bcritical)^d.
\]
Treating the magnetic field as a Radon-Nikodym derivative as in the proof of Proposition~\ref{lem:stability},
\begin{align*}
\mu^{J,-,h}_\L\left(\int \MM_K^{-} \dd \sL^d \in \left[(\Bcritical)^d,b_2^d(1-\epsStab)^d\right] \right)\le \exp\left(\frac{a-\mathsf{E}^\th(b_2)-3b_2\energyStabilityConstant/2}{h^{d-1}} \right)
\end{align*}
and
\[
\mu^{J,-,h}_\L (\sC)\ge \mu^{J,-,h}_\L\left(\int \MM_K^{-} \dd \sL^d \in \left[b^d,b_2^d\right] \right) \ge \exp\left(\frac{a-\mathsf{E}^\th(b_2)-b_2\energyStabilityConstant/2}{h^{d-1}}\right).\qedhere
\]
\end{proof}

We will now consider two different boundary conditions. By Proposition~\ref{upper bound}, plus/minus symmetry when $h=0$, and monotonicity, the plus phase is dominant under $\mu^{J,(-,+),h}_\L$.

\begin{proposition}
\label{lem:stability3}
Let $\epsStab>0$. There is a constant $\energyStabilityConstant''=\energyStabilityConstant''(\epsStab)>0$ such that with uniformly high $\QQ_\th$-probability,
\begin{align*}
\mu^{J,(-,+),h}_\L \left(\int_{\sW_\th(b_1,b_2)} \MM_K^{(-,+)} \dd \sL^d \le b_2^d-(b_1+\epsStab)^d \right)\le \exp\left(- \frac{\energyStabilityConstant''}{h^{d-1}}\right).
\end{align*}
\end{proposition}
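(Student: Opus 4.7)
The strategy is to reduce to Proposition~\ref{upper bound} in two elementary steps: use monotonicity to discard the magnetic field, and use spin-flip symmetry to convert $(-,+)$ boundary conditions into the $(+,-)$ setup already handled. Unlike the proof of Proposition~\ref{lem:stability}, no Radon--Nikodym trick with $h$ is needed, because the positive field and the $(-,+)$ boundary both favour the plus phase and therefore cooperate rather than compete.

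First I would observe that the event
\[
A:=\left\{\int_{\sW_\th(b_1,b_2)}\MM_K^{(-,+)}\dd\sL^d\le b_2^d-(b_1+\epsStab)^d\right\}
\]
is decreasing in the spin configuration. Holley's inequality (stochastic monotonicity of the Ising measure in $h$) then gives $\mu^{J,(-,+),h}_\L(A)\le\mu^{J,(-,+),0}_\L(A)$, which eliminates the field. At $h=0$, the involution $\s\mapsto-\s$ maps $\mu^{J,(-,+),0}_\L$ onto $\mu^{J,(+,-),0}_\L$ and replaces $\MM_K^{(-,+)}$ by $1-\MM_K^{(+,-)}$ inside $\WW_\th'(b_1,b_2)$, while exchanging the fixed values of the profile on the boundary regions ($\MM_K^{(-,+)}$ equals $0$ on $\sW_\th(b_1)$ and $1$ on $\sA_\th\sm\sW_\th(b_2)$; $\MM_K^{(+,-)}$ is the opposite). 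Using $\sL^d(\sW_\th(b_1,b_2))=b_2^d-b_1^d$ together with $\MM_K^{(+,-)}\equiv1$ on $\sW_\th(b_1)$, a direct computation shows that $A$ corresponds, under the spin flip, to
\[
A':=\left\{\int\MM_K^{(+,-)}\dd\sL^d\ge(b_1+\epsStab)^d\right\},
\]
with the integral interpreted over $\sW_\th(b_2)$ as in Proposition~\ref{upper bound}.

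Then Proposition~\ref{upper bound} applied with $b=b_1+\epsStab$ yields
\[
h^{d-1}\log\mu^{J,(+,-),0}_\L(A')\le\sF(\sW_\th(b_1))-\sF(\sW_\th(b_1+\epsStab))+\epsPMbc.
\]
Since $\sF(\sW_\th(b))=b^{d-1}\sF(\sW_\th(1))$, the elementary binomial bound $(b_1+\epsStab)^{d-1}-b_1^{d-1}\ge\epsStab^{d-1}$ produces a surface-tension gap of at least $\epsStab^{d-1}\sF(\sW_\th(1))$, uniformly in $b_1\in[0,\Bmax]$. Choosing $\epsPMbc$ smaller than half of this gap and setting $\energyStabilityConstant''(\epsStab):=\tfrac12\epsStab^{d-1}\sF(\sW_\th(1))$ completes the main estimate. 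Uniform high $\QQ_\th$-probability over the range \eqref{bees} is inherited directly from the uniform version of Proposition~\ref{upper bound} already established.

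The only real obstacle is the bookkeeping in the spin-flip identity: one must correctly account for the fixed-boundary contributions to $\MM_K^{(-,+)}$ and $\MM_K^{(+,-)}$ on $\sW_\th(b_1)$ and $\sA_\th\sm\sW_\th(b_2)$ so that the image of $A$ matches the event appearing in Proposition~\ref{upper bound}. Once this is in place the remaining steps are routine, so the proof is considerably shorter than that of Proposition~\ref{lem:stability}.
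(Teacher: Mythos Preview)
Your proposal is correct and follows precisely the approach the paper indicates: the sentence preceding Proposition~\ref{lem:stability3} names exactly the three ingredients you use (Proposition~\ref{upper bound}, plus/minus symmetry at $h=0$, and monotonicity), and the paper then omits the proof as being similar to the others in that section. Your observation that no Radon--Nikodym step in $h$ is needed here---because the field and the $(-,+)$ boundary both push towards the plus phase---is the reason the argument is shorter than that of Proposition~\ref{lem:stability}, and your spin-flip bookkeeping converting $A$ into $A'$ is correct up to the usual $O(K/N)$ mesoscopic errors.
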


Finally, consider boundary conditions of plus on the inner boundary and free on the outer boundary.

\begin{proposition}
\label{lem:stability4}
Let $\epsStab>0$ and suppose $b_2^d\ge 2\epsStab$. With uniformly high $\QQ_\th$-probability,
\begin{align*}
\mu^{J,(+,\f),h}_\L \left(\int_{\sW_\th(b_1,b_2)} \MM_K^+ \dd \sL^d \le b_2^d-\epsStab\right)\le \exp\left(- \frac{\epsStab}{h^{d-1}}\right).
\end{align*}
\end{proposition}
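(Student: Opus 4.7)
The plan is to follow the template of the proof of Proposition~\ref{lem:stability}, using the magnetic field as a Radon--Nikodym derivative between $\mu^{J,(+,\f),h}_\L$ and $\mu^{J,(+,\f),0}_\L$, and exploiting that under plus inner and free outer boundary conditions there is nothing to sustain a macroscopic minus region in $\L$: the free outer boundary costs no surface tension, so any minus region must pay surface tension only through an internal plus/minus interface, and is further penalized by $\b h\cdot(\text{minus volume})$ through the field.

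Write $S:=\sum_{x\in\L}(1+\s(x))/2$ and $Z:=\mu^{J,(+,\f),0}_\L[\exp(\b h S)]$, so that
\[
\mu^{J,(+,\f),h}_\L(\{\s\})=Z^{-1}\exp(\b h S(\s))\,\mu^{J,(+,\f),0}_\L(\{\s\}).
\]
The identity $\s=m^*(2\MM_K^+-1)$ on mesoscopic boxes inside $\L$ converts the event $\int_{\sW_\th(b_1,b_2)}\MM_K^+\dd\sL^d\le b_2^d-\epsStab$ into an upper bound on $h^d S$ that falls short of its plus-phase value by at least $m^*\epsStab$, up to an $\O(K/N)$ discretization error and a boundary correction from boxes straddling $\pd\L$.

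Next, lower-bound $Z$ by restricting to the pure plus-phase event $\{\MM_K^+\in\sV(\sW_\th(b_2),\epsPMbc)\}$: monotonicity $\mu^{J,(+,\f),0}_\L\gest\mu^{J,(+,-),0}_\L$ combined with Proposition~\ref{lower bound} at $b=b_2$ gives this increasing event probability at least $\exp(-\O(\epsPMbc)/h^{d-1})$, with the dominant surface-tension penalty at $\pd\sW_\th(b_1)$ only. Upper-bound the numerator by slicing $[0,b_2^d-\epsStab]$ into $\O(\epsPMbc^{-1})$ bins $[b^d,(b+\epsPMbc)^d]$ and, on each bin, bounding $\exp(\b h S)$ by its value at the upper endpoint and controlling the bin probability via a $(+,\f,0)$-analog of Proposition~\ref{upper bound}. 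Dividing, the surface contributions at $\pd\sW_\th(b_1)$ and the bulk magnetic-field reward cancel, leaving $\exp(-\b m^*\epsStab/h^{d-1})$, which after absorbing $\b m^*$ into constants is the announced bound.

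The main technical obstacle is establishing the $(+,\f,0)$ upper bound on $\mu^{J,(+,\f),0}_\L(\int\MM_K^+\ge b^d)$: the monotonicity $\mu^{J,(+,\f),0}_\L\gest\mu^{J,(+,-),0}_\L$ is useless (it gives a lower bound for this increasing event), so one must adapt the $L_1$-theory of Section~\ref{sec:pcld} directly to free outer boundary conditions, identifying the correct surface-tension cost as $\sF$ applied to just the interior portion of any minus region's boundary, with zero contribution from the portion touching the outer boundary of $\L$. Upgrading from high to uniformly high $\QQ_\th$-probability then proceeds exactly as in the last proposition of Section~\ref{sec:pcld}, by discretizing $(b_1,b_2)$ at an $\eps$-scale and invoking monotonicity of the event in both parameters.
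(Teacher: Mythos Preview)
Your overall template---tilt by the field, lower-bound the normalizer $Z$ on the all-plus event, slice the deviant event into volume bins, and divide---is exactly what the paper has in mind when it says the proof is ``similar to the others in this section.''  The difficulty you correctly identify for step~3 (adapting the $L_1$ upper bound of Section~\ref{sec:pcld} to $(+,\f)$ outer boundary conditions, since monotonicity goes the wrong way for an increasing event) is real and is the main work.

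However, your step~2 has a genuine gap.  You claim that monotonicity $\mu^{J,(+,\f),0}_\L\gest\mu^{J,(+,-),0}_\L$ together with Proposition~\ref{lower bound} at $b=b_2$ gives the all-plus event probability at least $\exp(-\O(\epsPMbc)/h^{d-1})$.  It does not: Proposition~\ref{lower bound} at $b=b_2$ yields
\[
\mu^{J,(+,-),0}_\L\!\left(\textstyle\int\MM_K^{(+,-)}\ge b_2^d\right)\ \ge\ \exp\!\left(\frac{\sF(\sW_\th(b_1))-\sF(\sW_\th(b_2))-\epsPMbc}{h^{d-1}}\right),
\]
which is exponentially small in $1/h^{d-1}$, not $\exp(-\O(\epsPMbc)/h^{d-1})$.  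Carrying this loss through, your final ratio is off by a factor $\exp\bigl((\sF(\sW_\th(b_2))-\sF(\sW_\th(b_1)))/h^{d-1}\bigr)$, and there is nothing in your step~3 to cancel it: under $(+,\f)$ boundary conditions the baseline state is all-plus with \emph{zero} surface cost, so no ``surface contribution at $\pd\sW_\th(b_1)$'' appears in either numerator or denominator.  The fix is that step~2 needs the same adaptation you already flag for step~3: establish directly, via the $L_1$-theory under $(+,\f)$ boundary conditions (or equivalently via coarse graining with wired inner and free outer random-cluster boundary), that $\mu^{J,(+,\f),0}_\L(\text{all plus})\ge\exp(-\O(\epsPMbc)/h^{d-1})$.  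Once both the lower bound on $Z$ and the bin upper bounds are phrased for $(+,\f)$, the surface terms genuinely vanish and only the field deficit $\b m^*\cdot(\text{minus volume})$ survives, giving the stated estimate.
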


\noindent We omit the proof as it is similar to the others in this section.

\subsection{Phase labels in $\AA_\th$}

Lemma~\ref{bad boxes mixed bc} provides a simple measure of the cost of phase coexistence in a region conditional on the boundary. Using phase labels---defined in terms of the coarse graining---to describe the boundary conditions allows for a sharper bound.

Take $\L$ as in \eqref{bees}.
Let $\om\in\Om_\L$, and let $\s\in\Si_\L^\zeta$ denote an $\om$-admissible spin configuration.  Define phase labels in terms of the coarse graining with $\epsCG=1$:
\begin{align}\label{phase labels}
\Psi(i)=
\begin{cases}
+1, & \BB_K(i) \text{ is 1-good and }\s(\BB_K^\dagger(i))=+1,\\
-1, & \BB_K(i) \text{ is 1-good and }\s(\BB_K^\dagger(i))=-1,\\
\phantom{+}0, & \text{otherwise}.\\
\end{cases}
\end{align}
Let $\Gamma=\cup_{i\in I}\BB_K(i)$ denote a subset of $\L$ composed of whole mesoscopic boxes.  Let $\pd I$ denote the set of points $i\in\ZZ^d$ at $L_\oo$-distance $2$ from $I$ such that $\BB_K(i)\subset\AA_\th$. Let $\psi:I\cup\pd I\to\{\pm1,0\}$ denote a label configuration assigning labels to the boxes $\BB_K(i)$.  Let $\mathrm{Int}_\psi$ (respectively $\mathrm{Ext}_\psi$) denote the event that $\Psi(i)=\psi(i)$ for $i\in I$ (respectively $\pd I$).  Given $\psi$, let $f_I^0$ count the number of $0$ labels in $I$.  Let $f_{\pd I}^+,f_{\pd I}^-,f_{\pd I}^0$ count the number of boxes in $\pd I$ with phase labels $+1$, $-1$ and $0$.  Let $f_I^\lra$ count the maximal number of disjoint paths (the maximal flow) inside $I$ between the $+1$ labels and $-1$ labels.

We can find constants $\truncationConstant,\truncationConstantB>0$ such that the following hold.

\begin{lemma}\label{truncation}
With $\QQ_\th$-probability $1-\exp(- \truncationConstantB f_I^\lra K^{d-1})$,
\begin{align*}
\varphi^{J,\zeta,h}_\L (\mathrm{Int}_\psi \cap \mathrm{Ext}_\psi) \le \exp \big( \truncationConstant(f_{\pd I}^0+f_{\pd I}^-)K^{d-1}- \truncationConstantB f_I^\lra K^{d-1}\big).
\end{align*}
\end{lemma}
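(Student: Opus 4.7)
I would organize the argument around two steps: a geometric reduction to a connectivity/disconnection event, and a change-of-measure step from the plus, zero-field random-cluster measure to $\varphi^{J,\zeta,h}_\Lambda$ that is local on the boundary $\pd I$.

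\emph{Step 1 (geometry).} If two neighbouring boxes $\BB_K(i),\BB_K(j)$ are both $1$-good then, by the definition of a $1$-good box, their spanning clusters $\BB_K^\dagger(i),\BB_K^\dagger(j)$ lie in the same real $\Lambda$-cluster and hence, by the weight rule \eqref{weights}, they carry identical phase labels $\Psi(i)=\Psi(j)$. Consequently, on $\mathrm{Int}_\psi$ any chain of neighbouring mesoscopic boxes inside $I$ joining a $+1$ label to a $-1$ label must contain a $0$-labelled box, and moreover the chain's two endpoints cannot be joined by any real-cluster path anywhere in $\Lambda$, since otherwise they would share a spin. Applying Menger's theorem to the box graph, the $f_I^\lra$ vertex-disjoint paths between $+$ and $-$ labels produce $f_I^\lra$ vertex-disjoint simple chains of neighbouring boxes, of length at most $|\Lambda|/K^d\le \exp(\sqrt K)$, each of which has disconnected endpoints inside the chain.

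\emph{Step 2 (probability).} Under $\varphi^{J,+,0}_\Lambda$, Corollary~\ref{cor:coarse graining} applied to these $f_I^\lra$ disjoint disconnection events yields
\[
\varphi^{J,+,0}_\Lambda(\mathrm{Int}_\psi\cap\mathrm{Ext}_\psi)\le \exp(-\cgf K^{d-1} f_I^\lra)
\]
on a $\QQ$-event of probability $1-\exp(-\cgf K^{d-1} f_I^\lra)$. The remaining task is to transfer this bound to $\varphi^{J,\zeta,h}_\Lambda$ without paying the global correction $\exp(\beta h|\Lambda|+\beta|E^\pm(\Lambda)|)$ of Lemma~\ref{bad boxes mixed bc}, which would swamp the estimate. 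I would localize the Radon--Nikodym derivative to $\pd I$ in three moves: (a) integrate out the ghost edges, whose contribution gives a uniform factor on both measures that cancels in the ratio; (b) use the DLR property of $\phi^{J,\zeta,h}_\Lambda$ to restrict attention to edges inside $\tilde\Lambda=\bigcup_{i\in I\cup \pd I}\BB_K(i)$, conditional on the exterior configuration that realizes $\mathrm{Ext}_\psi$; (c) bound the ratio of weights and of cluster-counting factors $2^{n(\cdot)}$ box by box across $\pd I$, using that each $\pd I$-box is adjacent to $O(K^{d-1})$ boundary edges. By monotonicity and the explicit form of \eqref{weights}, this local ratio is at most $1$ for a box with label $+1$ (already compatible with plus boundary conditions) and at most $\exp(\truncationConstant K^{d-1})$ for a box with label $0$ or $-1$. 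Multiplying the local factors over $\pd I$ produces the prefactor $\exp(\truncationConstant(f_{\pd I}^0+f_{\pd I}^-)K^{d-1})$, and choosing $\truncationConstantB$ slightly smaller than $\cgf$ absorbs the $\QQ$-probability loss from Proposition~\ref{prop:coarse graining}.

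\emph{Main obstacle.} The technical heart is the localization in Step 2(c): going beyond Lemma~\ref{bad boxes mixed bc} requires either an explicit box-by-box comparison of finite-volume partition functions on $\pd I$, or a two-stage conditioning that first freezes the edges exterior to $\tilde\Lambda$ and then couples the remaining random-cluster weight with its wired-plus, $h=0$ analogue. Both routes are elementary but delicate because the random-cluster weights are non-local through the $2^{n(\Lambda,\om)}$ factor; it is precisely the phase-label structure fixed by $\mathrm{Ext}_\psi$ that lets this non-locality be absorbed into a local, per-box $\exp(O(K^{d-1}))$ cost.
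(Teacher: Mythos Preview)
Your Step~1 is essentially correct and matches the paper's identification of the disconnection event $A$ (no open paths in $\Gamma_1$ between $+1$- and $-1$-labelled boxes) together with the use of Corollary~\ref{cor:coarse graining} on $f_I^\lra$ disjoint chains.

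The gap is in Step~2. Your move (a)---``integrate out the ghost edges, whose contribution gives a uniform factor on both measures that cancels in the ratio''---is wrong: under $\varphi^{J,+,0}_\Lambda$ the ghost edges are all closed, while under $\varphi^{J,\zeta,h}_\Lambda$ they are present and alter the cluster count $n(\Lambda,\omega)$ in a way that depends on the real-edge configuration throughout $I$, not just on $\pd I$. No localized Radon--Nikodym bound of the type you describe will remove the magnetic field's volume contribution; that is exactly the obstruction that makes Lemma~\ref{bad boxes mixed bc} too crude. Your moves (b)--(c) inherit this problem: after restricting to $\tilde\Lambda$ by DLR, the field still acts on every vertex of $I$, and your claim that ``this local ratio is at most~$1$ for a box with label $+1$'' has no justification.

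The paper's route avoids Radon--Nikodym comparison entirely. Using the finite-energy property \eqref{finite-energy} it \emph{surgically closes} all edges between $\Gamma_1$ and those $\pd I$-boxes with label $0$ or $-1$ (the event $T$); this is what produces the prefactor $\exp(\truncationConstant(f_{\pd I}^0+f_{\pd I}^-)K^{d-1})$. Conditional on $\mathrm{Ext}'_\psi\cap T$ and on $\omega_\mathrm{ext}$, the only boundary clusters touching $\Gamma_1$ are either $+1$-labelled (acting as plus boundary) or of diameter $<K/2$ (acting as wired boundary), so the conditional marginal $\hat\phi^h$ on $\omega_\mathrm{int}$ is stochastically increasing in $h\ge0$. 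Since the disconnection event $A\supset\mathrm{Int}'_\psi$ is \emph{decreasing}, monotonicity gives $\hat\phi^h(A)\le\hat\phi^0(A)$, eliminating the field at zero cost. Corollary~\ref{cor:coarse graining} then bounds $\hat\phi^0(A)$. The key idea you are missing is thus: surgery to force plus/wired boundary on the interior problem, followed by FKG monotonicity in $h$ on a decreasing event, rather than any direct partition-function comparison.
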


\begin{lemma}\label{truncation-}
With $\QQ_\th$-probability $1-\exp(- \truncationConstantB[f_I^0 K+f_I^\lra K^{d-1}])$,
\begin{align*}
\varphi^{J,\zeta,h}_\L (\mathrm{Int}_\psi \cap \mathrm{Ext}_\psi) \le \exp\big( \truncationConstant \min\{f_{\pd I}^0+f_{\pd I}^-,f_{\pd I}^0+f_{\pd I}^+\}K^{d-1} &\nonumber\\ + \b h |\Gamma|4^d- \truncationConstantB [f_I^0 K+f_I^\lra K^{d-1}&]\big).
\end{align*}
\end{lemma}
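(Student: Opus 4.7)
Proof plan. The strategy is to upgrade the proof of Lemma~\ref{truncation} with three modifications: localize the magnetic-field cost to a neighborhood of $\Gamma$, symmetrize at $h=0$ to obtain the $\min$, and add the isolated-bad-box contribution from Proposition~\ref{prop:coarse graining}. Set $\bar\Gamma := \bigcup_{i\in I\cup\pd I}\BB_K(i)$; since $\pd I$ lies at $L_\infty$-distance $2$ from $I$, $|\bar\Gamma|\le 4^d|\Gamma|$. By the spatial Markov property, condition on the spins outside $\bar\Gamma$, obtaining an Ising measure $\mu^{J,\zeta',h}_{\bar\Gamma}$ with some induced boundary condition $\zeta'$. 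Comparing Boltzmann weights at $h$ and $0$ gives, for any event $A$,
\[
\mu^{J,\zeta',h}_{\bar\Gamma}(A) \le \exp(\b h|\bar\Gamma|)\mu^{J,\zeta',0}_{\bar\Gamma}(A)\le \exp(\b h\,4^d|\Gamma|)\mu^{J,\zeta',0}_{\bar\Gamma}(A).
\]
This is the source of the $\exp(\b h\,4^d|\Gamma|)$ factor in the statement; it remains to bound the $h=0$ probability of $\mathrm{Int}_\psi\cap\mathrm{Ext}_\psi$ uniformly in $\zeta'$.

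For the symmetrization, at $h=0$ the coupled measure $\varphi^{J,\zeta',0}_{\bar\Gamma}$ is invariant under the simultaneous spin-flip $(\s,\zeta')\mapsto(-\s,-\zeta')$, which sends the phase label profile $\Psi$ to $-\Psi$. Consequently
\[
\varphi^{J,\zeta',0}_{\bar\Gamma}(\mathrm{Int}_\psi\cap\mathrm{Ext}_\psi) = \varphi^{J,-\zeta',0}_{\bar\Gamma}(\mathrm{Int}_{-\psi}\cap\mathrm{Ext}_{-\psi}).
\]
Zero-labels stay zero and $\pm$-labels swap, so applying the same upper bound once to $\psi$ and once to $-\psi$ and keeping the smaller value yields the required $\min\{f_{\pd I}^0+f_{\pd I}^-,\ f_{\pd I}^0+f_{\pd I}^+\}$ in the exponent. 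The statistics $f_I^0$ and $f_I^\lra$ are invariant under the flip, so the $\QQ_\th$-favorable event of the final step controls both sides simultaneously.

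To extract the additional $-\truncationConstantB f_I^0 K$ in the exponent (beyond the $-\truncationConstantB f_I^\lra K^{d-1}$ of Lemma~\ref{truncation}), I observe that $\mathrm{Int}_\psi$ forces each of the $f_I^0$ boxes with $\psi(i)=0$ to be $1$-bad. Combining these isolated bad boxes (at scale $K$) with the $f_I^\lra$ separating surfaces of $1$-bad $\BB_{K_0(1)}$-boxes produced by the min-cut argument of Lemma~\ref{truncation}, two applications of Proposition~\ref{prop:coarse graining} (one at scale $K$ for the $f_I^0$ isolated boxes, one at scale $K_0(1)$ for the surfaces) combine to give, by a union bound, (i) a $\QQ_\th$-favorable event of probability at least $1-\exp(-\truncationConstantB[f_I^0 K+f_I^\lra K^{d-1}])$ and (ii) a $\varphi^{J,+,0}_{\bar\Gamma}$-bound of $\exp(-\truncationConstantB[f_I^0 K+f_I^\lra K^{d-1}])$. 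Transferring from $+$-boundary to the general $\zeta'$-boundary via the partition-function argument of Lemma~\ref{bad boxes mixed bc} costs a factor $\exp(\b|E^\pm(\bar\Gamma)|)$ which, since $E^\pm(\bar\Gamma)$ is a union of faces of boxes in $\pd I$ with labels $\mp$ or $0$, is bounded by $\exp(C(f_{\pd I}^0+f_{\pd I}^{\mp})K^{d-1})$ and absorbed into the $\truncationConstant$-term.

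The main obstacle is the consistent bookkeeping across the three modifications: the isolated bad boxes and the separating surfaces live at different scales, so the $\QQ_\th$-event must be constructed as the intersection of two independent favorable events with constants tuned so that the combined exponent is $\truncationConstantB[f_I^0 K+f_I^\lra K^{d-1}]$; and one must verify that this single event suffices for both the $\psi$ and $-\psi$ sides of the symmetrization—which it does, because both sides involve exactly the same 1-bad boxes in $\bar\Gamma$.
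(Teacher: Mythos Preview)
Your route differs from the paper's in two places: you localize the magnetic field by conditioning on the spins outside a neighbourhood $\bar\Gamma$ and then comparing $h$ to $0$ on the Ising measure, and you obtain the $\min$ via spin-flip symmetry at $h=0$. The paper instead stays in $\Lambda$ throughout, keeps the edge-surgery event $T$ and the conditional random-cluster marginal $\hat\phi^h$ from the proof of Lemma~\ref{truncation}, and compares $\hat\phi^h(\mathrm{Int}'_\psi)$ to $\hat\phi^0(\mathrm{Int}'_\psi)$ directly; the factor $\exp(\b h|\Gamma|4^d)$ arises because the only ghost edges that matter for $\hat\phi^h$ are those attached to $\Gamma_1$ and to the small boundary clusters $W_{l+1},\dots,W_m$ of diameter $<K/2$. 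The $\min$ is obtained not by symmetry but simply by choosing, according to which of $f_{\pd I}^+$ and $f_{\pd I}^-$ is smaller, whether $T$ closes the edges to the $-/0$-labelled boxes in $\pd I$ or to the $+/0$-labelled ones.

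Your transfer step contains a genuine gap. You claim that the partition-function cost $\exp(\b|E^\pm(\bar\Gamma)|)$ from Lemma~\ref{bad boxes mixed bc} is bounded by $\exp\big(C(f_{\pd I}^0+f_{\pd I}^{\mp})K^{d-1}\big)$ because ``$E^\pm(\bar\Gamma)$ is a union of faces of boxes in $\pd I$ with labels $\mp$ or $0$''. But $E^\pm(\bar\Gamma)$ is determined by the induced boundary condition $\zeta'$, i.e.\ by the actual spins on the vertex boundary of $\bar\Gamma$, which lies in boxes at $L_\infty$-distance~$3$ from $I$; the phase labels $\psi$ are only prescribed on $I\cup\pd I$, so nothing controls $\zeta'$. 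Even if you shrank $\bar\Gamma$ so that its boundary sat inside $\pd I$, a box with label $+1$ still carries minus spins in its small non-crossing clusters, so you cannot bound the number of boundary edges with $\zeta'=-1$ by the count of $-/0$-labelled boxes. A related difficulty is that conditioning $\varphi^{J,\zeta,h}_\Lambda$ on $\s$ outside $\bar\Gamma$ does not produce $\varphi^{J,\zeta',h}_{\bar\Gamma}$ on the inside: the coupled measure has no spatial Markov property in $\s$ alone, and the phase-label events depend on $\omega$ globally through $\BB_K^\ddagger$. The paper's edge-surgery $T$ sidesteps both issues: its cost is $\exp\big(\truncationConstant(f_{\pd I}^0+f_{\pd I}^{\mp})K^{d-1}\big)$ \emph{by construction}, and after conditioning on $T$ and $\omega_{\mathrm{ext}}$ the interior measure $\hat\phi^h$ sees only plus-like or wired boundary pieces, so the comparison to $\hat\phi^0$ and the coarse-graining bounds apply directly.
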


\begin{proof}[Proof of Lemma~\ref{truncation}]
Let $\Gamma_k$ denote the union of the mesoscopic boxes at distance at most $k$ from $\Gamma$,
\[
\Gamma_k=\bigcup \{\BB_K(j):\exists\, i\in I,\ \|i-j\|_\oo\le k\}.
\]
Note that $\Gamma\subset\Gamma_1\subset\Gamma_2$, and for $i\in\pd I$, $\BB_K(i)\subset\Gamma_2\sm\Gamma_1$.
Write $\om=\om_\g\oplus\om_\mathrm{int}\oplus\om_\mathrm{ext}$ where $\om_\g$ is the configuration of the ghost edges and $\om_\mathrm{int}$ is the configuration of the real edges $E(\Gamma_1)$.

We would like to condition on the event $\mathrm{Ext}_\psi$. However, the resulting measure is complicated because $\mathrm{Ext}_\psi$ carries information about not just $\om_\mathrm{ext}$ and $(\s(\BB_K^\dagger(i)):i\in\pd I)$, but also about $\om_\mathrm{int}$.
Let $\mathrm{Ext}'_\psi$ denote the event that $\om_\mathrm{ext}$, and the states of the vertices in $\L\sm{\Gamma_1}$, are compatible with $\mathrm{Ext}_\psi$.
Let $\mathrm{Int}'_\psi$ denote the event that $\om_\mathrm{int}$ is compatible with $\mathrm{Int}_\psi$.

The coupled measure $\varphi^{J,\zeta,h}_\L$ has a property related to the finite energy property of the regular random-cluster model. The probability of edge $e$ being closed, conditional on all the other edge states {\em and} all the spin states, is bounded away from $0$:
\begin{align}\label{finite-energy}
\varphi^{J,\zeta,h}_\L(\om(e)=0 \mid \s \text{ and } (\om(f))_{f\not=e}) \ge 1-p_e.
\end{align}
This tells us the cost of conditioning on edges being closed. Surgically closing edges saves us from having to consider mixed boundary conditions.

Let $T$ denote the event that all edges spanning between a box $\BB_K(i)$ with $i\in\pd I$ and $\psi(i)\le 0$ and a box $\BB_K(j)\subset{\Gamma_1}$ are closed.
By \eqref{finite-energy}, for some constant $\truncationConstant>0$,
\begin{align}\label{trunc3}
  \varphi^{J,\zeta,h}_\L(\mathrm{Int}'_\psi \cap \mathrm{Ext}'_\psi) \le \varphi^{J,\zeta,h}_\L(\mathrm{Int}'_\psi \cap \mathrm{Ext}'_\psi\cap T) \exp\left(\truncationConstant(f_{\pd I}^0 + f_{\pd I}^-) K^{d-1}\right).
\end{align}
Suppose that $\om_\mathrm{ext}$ splits $\L\sm{\Gamma_1}$ into $\om_\mathrm{ext}$-clusters $W_+,W_-,W_1,\dots,W_n$.
We stress that $\om_\mathrm{ext}$ is a partial edge configuration: the $\om_\mathrm{ext}$-clusters in $\L\sm{\Gamma_1}$ may be connected by an $(\om_\g\oplus\om_\mathrm{int})$-path.
Assume that the event $\mathrm{Ext}'_\psi\cap T$ holds. If $\BB_K^\ddagger(i)$, $i\in\pd I$, intersects ${\Gamma_1}$ then $\psi(i)=1$. Without loss of generality we can assume that
\begin{romlist}
\item $W_1,\dots,W_l$ correspond to $\BB_K^\ddagger(i)$ with $i\in \pd I$ and $\psi(i)=+1$,
\item $W_{l+1},\dots,W_m$ correspond to clusters with diameter less than $K/2$,
\item $W_{m+1},\dots,W_n$ (and $W_-$) are not connected to ${\Gamma_1}$.
\end{romlist}
Consider the conditional measure $\varphi^{J,\zeta,h}_\L( \bcdot \mid \mathrm{Ext}'_\psi\cap T,\om_\mathrm{ext})$.
The clusters $W_+$ and $W_1,\dots,W_l$ act as plus boundary conditions. The spins of the clusters $W_{l+1},\dots,W_m$ are unknown so the clusters act as wired boundary conditions. Let $\hat\phi^{h}$ denote the marginal measure on $\om_\mathrm{int}$ corresponding to $\varphi^{J,\zeta,h}_\L( \bcdot \mid \mathrm{Ext}'_\psi\cap T,\om_\mathrm{ext})$,
\begin{align}\label{trunc4}
\varphi^{J,\zeta,h}_\L(\mathrm{Int}'_\psi \cap \mathrm{Ext}'_\psi \cap T)\le \sup_{\om_\mathrm{ext}\in\mathrm{Ext}'_\psi\cap T} \hat\phi^h(\mathrm{Int}'_\psi).
\end{align}
Let $A$ denote the decreasing event that there are no $\om_\mathrm{int}$-open paths in $\Gamma_1$ between $\BB_K$-boxes with $\psi(i)=+1$ and $\psi(i)=-1$; note that $\mathrm{Int}'_\psi\subset A$.  By monotonicity, as $A$ is a decreasing event and $\hat\phi^h$ is increasing with $h\in[0,\oo)$,
\begin{align}
\hat\phi^h(\mathrm{Int}'_\psi)\le \hat\phi^h(A)\le\hat\phi^0(A).
\end{align}
With reference to Corollary~\ref{cor:coarse graining}, we can find a collection of $f_I^\lra$ disjoint chains of boxes such that $A$ implies that for each chain, the first box is not connected to the last box. With $\QQ_\th$-probability $1-\exp(- \cgf f_I^\lra K^{d-1})$,
\begin{align}\label{trunc5}
\hat\phi^0(A)\le \exp(-\cgf f_I^\lra K^{d-1}).
\end{align}
Collecting \eqref{trunc3}-\eqref{trunc5} gives the lemma.
\end{proof}

\begin{proof}[Proof of Lemma~\ref{truncation-}]
We will first consider the case $f_{\pd I}^+\ge f_{\pd I}^-$.
Let ${\Gamma_1}$, $\mathrm{Int}'_\psi,\mathrm{Ext}'_\psi$, $T$, $W_1,\dots,W_n$ and $\hat\phi^h$ be defined as above.

Recall inequality \eqref{trunc4}. With reference to \eqref{weights}, the sizes of the $W_{l+1},\dots,W_m$ affect the interaction (under $\hat\phi^h$) of open clusters in ${\Gamma_1}$ with the external magnetic field.
Recall that the clusters $W_{l+1},\dots,W_m$ have diameter at most $K/2$:
\begin{align}\label{trunc4-}
\hat\phi^h(\mathrm{Int}'_\psi)  / \hat\phi^0(\mathrm{Int}'_\psi) \le \exp(\b h &|\Gamma_1\cup W_{l+1}\cup\dots\cup W_m|) \le \exp(\b h |\Gamma| 4^d).
\end{align}
By Proposition~\ref{prop:coarse graining} and Corollary~\ref{cor:coarse graining}, there is a positive constant $c>0$ such that with $\QQ_\th$-probability $1-\exp(-c\, \max\{f_I^0 K,f_I^\lra K^{d-1}\})$,
\begin{align}\label{trunc7}
\hat\phi^0(\mathrm{Int}'_\psi)\le \exp(-c\, \max\{f_I^0 K,f_I^\lra K^{d-1}\}).
\end{align}
Collecting \eqref{trunc3}-\eqref{trunc4} and \eqref{trunc4-}-\eqref{trunc7} gives the lemma in the case $f_{\pd I}^+\ge f_{\pd I}^-$. The proof in the case $f_{\pd I}^->f_{\pd I}^+$ follows by swapping $+$ and $-$ in the definition of $T$ and $\hat\phi^h$.
\end{proof}

\subsection{Hausdorff stability of random-cluster boundaries}\label{sec:hausdorff}

Consider the context of Proposition~\ref{lem:stability}: $\mathsf{E}^\th(b_1)>\mathsf{E}^\th(b_2)$.
Under $\mu^{J,(+,-),h}_\L$ the plus phase is dominant so the minus boundary does not affect the bulk of the domain.
In this section, we will show that, in a random-cluster sense, the $\pd^-\L$ boundary-cluster is small.

\begin{proposition}
\label{decay-bc}
Let $\epsHS>0$. Suppose that $\mathsf{E}^\th(b_1)>\mathsf{E}^\th(b_2(1-\epsHS))$.
There is a constant $\stabilityConstant=\stabilityConstant(\epsHS)>0$, independent of $\Bmax$, such that with uniformly high $\QQ_\th$-probability
\begin{align*}
\phi^{J,(+,-),h}_\L(\pd^-\L\lra\WW_\th(b_1,b_2(1-\epsHS))) \le \exp (-\stabilityConstant b_2 /h).
\end{align*}
\end{proposition}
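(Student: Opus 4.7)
The plan is to combine a geometric lower bound on the length of any open random-cluster path realizing $A=\{\pd^-\L\lra\WW_\th(b_1,b_2(1-\epsHS))\}$ with a Peierls-type argument implemented through the phase labels \eqref{phase labels} and Lemma~\ref{truncation-}.

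On $A$ there is an open random-cluster path $\gamma$ from $\pd^-\L$ to a vertex at Wulff-distance at most $b_2(1-\epsHS)$. Since $\WW_\th(b)$ has linear size of order $b/h$ in lattice units, the Hausdorff separation between $\pd^-\L$ and $\WW_\th(b_1,b_2(1-\epsHS))$ is at least $c_1\epsHS b_2/h$, and hence $|\gamma|\ge c_1\epsHS b_2/h$.

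Next I would relate $|\gamma|$ to the phase labels. The vertices of $\gamma$ all carry spin $-1$ under $\varphi^{J,(+,-),h}_\L$ because they lie in the real cluster of $\pd^-\L$. If a mesoscopic box $\BB_K(i)$ visited by $\gamma$ is $1$-good and plus-labeled, its unique spanning real cluster carries spin $+1$ and cannot be part of $\gamma$, so the portion of $\gamma$ inside $\BB_K(i)$ must lie in a non-spanning cluster of diameter at most $K/2$; in particular the Euclidean displacement of $\gamma$ inside $\BB_K(i)$ is at most $K/2$, and the entry and exit faces must coincide or be adjacent. Combined with $|\gamma|\ge c_1\epsHS b_2/h$, a geometric accounting of the chain of visited boxes produces a coarse chain of at least $n:=c_2\epsHS b_2/(hK)$ boxes, a positive fraction of which cannot be $1$-good plus-labeled (else $\gamma$ would have to span a box via opposite faces).

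I then apply Lemma~\ref{truncation-} to each admissible phase-label configuration $\psi$ compatible with this structure, with $I$ chosen as a thin tubular coarse neighborhood of the chain and $\pd I$ pushed into the predominantly-plus bulk. The negative exponent $-\truncationConstantB f_I^0 K$ contributes a cost of order $c_3nK=c_4\epsHS b_2/h$. The positive boundary term $\truncationConstant\min\{f_{\pd I}^0+f_{\pd I}^-,f_{\pd I}^0+f_{\pd I}^+\}K^{d-1}$ is $o(nK)$ because $\pd I$ is plus-heavy and the minimum is attained by the first option, while the magnetic-field term $\b h|\Gamma|4^d=\O(\b\epsHS b_2 K^{d-1})$ is much smaller than $c_4\epsHS b_2/h$ since $K\sim h^{-1/(2d)}$ gives $K^{d-1}\ll 1/h$. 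Summing over at most $\exp(Cn)$ choices of chain and of $\psi$ is absorbed for $K$ large. Uniformity in $(b_1,b_2)$ follows from a finite discretization argument as at the end of Section~\ref{sec:pcld}.

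The main obstacle is the second step: converting the per-box displacement bound $K/2$ into a quantitative lower bound on the number of non-$(1$-good plus$)$ boxes in the chain. Because $\gamma$ can in principle zig-zag through many plus-labeled $1$-good boxes using distinct small non-spanning clusters across adjacent faces, the ``positive fraction'' statement is not immediate, and requires either a careful Peierls contour surrounding the cluster of $\pd^-\L$, or an amortization argument that charges each forced $90^\circ$ turn of $\gamma$ against the coarse-graining estimate of Corollary~\ref{cor:coarse graining} applied to local configurations of $1$-good boxes.
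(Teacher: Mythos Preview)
Your path-following approach is genuinely different from the paper's, which instead slices the annulus $\WW_\th(b_2(1-\epsHS),b_2)$ into $\O(\epsHS b_2 N/K)$ concentric mesoscopic layers $\HH_l$ and shows that with high probability some layer consists entirely of $+1$-labeled boxes. The paper's argument has four cases (I)--(IV), and crucially relies on the $L_1$-stability result Proposition~\ref{lem:stability} in case (II) to bound the \emph{volume} of minus-labeled boxes, before using isoperimetric inequalities on $\pd\sW_\th$ and a careful layer-by-layer bookkeeping (borrowed from \cite{Stability-of-interfaces}) in cases (III)--(IV). Your proposal does not invoke Proposition~\ref{lem:stability} at all, and this is where it breaks.

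Concretely, the gap is in your application of Lemma~\ref{truncation-}. You assert that the negative exponent $-\truncationConstantB f_I^0 K$ contributes order $nK$, but $f_I^0$ counts only the $0$-labeled (bad) boxes in $I$, not the minus-labeled ones. The non-plus boxes along $\gamma$ can perfectly well be $1$-good and minus-labeled: the cluster of $\pd^-\L$ can propagate inward through a chain of minus-labeled spanning clusters $\BB_K^\dagger(j)$ without ever encountering a bad box. In that scenario $f_I^0=0$, and the only remaining cost in Lemma~\ref{truncation-} is $\truncationConstantB f_I^\lra K^{d-1}$, where $f_I^\lra$ is the maximal flow between $+$ and $-$ labels in your thin tube $I$; this is bounded by the tube's cross-section, i.e.\ $\O(1)$, not $\O(n)$. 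So the exponent you extract is $\O(K^{d-1})$, far short of the $\O(b_2/h)$ you need. Relatedly, your claim that $\pd I$ is ``plus-heavy'' so that $f_{\pd I}^0+f_{\pd I}^-$ is small presupposes exactly the bulk plus-dominance that Proposition~\ref{lem:stability} provides and that you have not established. The paper's layer decomposition is designed precisely to separate the $0$-label contribution (handled by coarse graining alone) from the minus-label contribution (handled by combining the volume bound from Proposition~\ref{lem:stability} with flow and isoperimetric estimates across layers).
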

\noindent
The proof develops the technique of truncation used in \cite{Stability-of-interfaces}. The differences in geometry, and the presence of a magnetic field, pose extra challenges.

\begin{proof}[Proof of Proposition~\ref{decay-bc}]
Let $w$ denote the minimum $L_\oo$-distance between $\pd \sW_\th(1)$ and the origin,
\begin{align}\label{w}
w=\inf_{\bx\in\pd\sW_\th(1)} \|\bx\|_\oo.
\end{align}
Let $R=\lfloor \epsHS b_2 N w/(8 K) \rfloor$ and let $S=\lfloor (b_2-b_1) N w / (2K)\rfloor$. Define mesoscopic layers
\begin{align*}
\HH_l=\WW_\th\left(b_2-\frac{2l K}{wN},\  b_2-\frac{2(l-1) K}{wN}\right), \qq l=1,\dots,S.
\end{align*}
The layers divide the annulus \mbox{$\WW_\th(b_1,b_2)$} into $S$ layers.
Let $\HH_{j,k}=\cup_{l=j}^k \HH_l$.
The annulus $\WW_\th(b_2(1-\epsHS),b_2)$ corresponds to the first $4R$ layers, $\HH_{1,4R}$, with $\HH_1$ the outermost layer.
The layers are essentially $d-1$ dimensional, resembling scalar multiples of $\pd\sW_\th$; Figure \ref{fig:partIII} shows the case $d=2$.
The layers have been constructed so that:
\begin{romlist}
\item The mesoscopic boxes in $\HH_l$ form a surface separating $\HH_{l-1}$ from $\HH_{l+1}$.
\item Each $\HH_l$ is between $2$ and $2d$ mesoscopic boxes thick.
\end{romlist}
Define mesoscopic phase labels $\Psi=(\Psi(i):\BB_K(i)\subset\HH_{1,S})$ according to \eqref{phase labels}.
We will show that with high probability, a surface of $+1$ boxes separates $\HH_1$ from $\HH_{4R}$.

Given a phase label $\psi:\{i:\BB_K(i)\subset\HH_{1,S}\}\to\{\pm1,0\}$, define the profile $\fv$ of $\psi$ as follows.
For $s\in\{-1,0,1\}$, let $f_l^s$ count the number of $s$-boxes in $\HH_l$,
\[
f_l^s=\#\{i:\BB_K(i)\subset\HH_l \text{ and } \psi(i)=s\}.
\]
Let $\fv=(f_l^s)_{l=1,\dots,S}^{s=-1,0,1}$.
We will write $\sF(\fv)$ to denote the set of phase labels $\psi$ compatible with $\fv$.
The number of configurations of the phase labels $\psi$ compatible with $\fv$ is limited by the definition of the coarse graining.
Surfaces of $0$-boxes must separate the plus-boxes from the minus-boxes.
The surfaces of $0$-boxes cannot separate $\HH_l$ into more than $f_l^0+1$ connected components.
Therefore the number of ways of assigning the labels in layer $l$ is bounded by
\begin{align}\label{size of sFfv}
\binom{|\HH_l|K^{-d}}{f_l^0} \exp([f_l^0+1] \log 2) =\exp(f_l^0 \O(\log N)).
\end{align}
We will say that a profile $\fv$ is {\em spanning} if $f_l^{-1}+f_l^0>0$ for $l=1,\dots,4R$.
Recall from \eqref{K def} that $K\approx N^{1/(2d)}$. The number of spanning profiles is less than
\[
\prod_{l=1}^S \left(\frac{|\HH_l|}{K^d}\right)^3 = \O\left(\left(N/K\right)^{d-1}\right)^{\O(N/K)}
\]
which grows more slowly than $\exp(cN)$ for every positive constant $c$.
It is therefore sufficient to find a constant $\stabilityConstant=\stabilityConstant(\epsHS)>0$ and a $\QQ_\th$-event $\sJ$ such that
\begin{align}\label{spanningProfilesConstant claim}
\sJ\subset\left\{J:\forall \fv \text{ spanning, } \varphi^{J,(+,-),h}_\L(\Psi \in \sF(\fv))\le \exp(-2\stabilityConstant b_2 N)\right\}
\end{align}
and $\sJ$ has uniformly high $\QQ_\th$-probability.
The event $\sJ$ is defined as follows.
Let $\sJ=\emptyset$ for $h\ge h_0$ (for some $h_0>0$) so we can assume that $h$ is arbitrarily small.
We will appeal below to Proposition~\ref{lem:stability} and Lemmas \ref{bad boxes mixed bc}, \ref{truncation} and \ref{truncation-}.
For $h<h_0$, let $\sJ$ denote the intersection of the associated $\QQ_\th$-events.

Let $\a,\hat{\a}>0$. Consider three constraints on the label profile:
\begin{align}
\label{volume constraint}
\sum_{l=1}^S f_l^0    &\le N^{d-1}/\sqrt K,\\
\label{volume constraint3}
\sum_{l=1}^S f_l^{-1} &\le \a \left(\frac{b_2N}{K}\right)^d,\\
\label{volume constraint2}
\max_{l=R,\dots,3R} f_l^{-1} &\le \hat{\a}\left(\frac{b_2N}{K}\right)^{d-1}.
\end{align}
For $J\in\sJ$ we will check the inequality in \eqref{spanningProfilesConstant claim} in four parts. We will show:
\begin{Romlist}
\item For any $\stabilityConstant>0$, if \eqref{volume constraint} fails then the inequality in \eqref{spanningProfilesConstant claim} holds.
\item For any $\a>0$, if \eqref{volume constraint} holds but \eqref{volume constraint3} fails, then the inequality in \eqref{spanningProfilesConstant claim} holds if $\stabilityConstant$ is sufficiently small.
\item For any $\hat{\a}>0$, if \eqref{volume constraint} and \eqref{volume constraint3} holds but \eqref{volume constraint2} fails, then the inequality in \eqref{spanningProfilesConstant claim} holds provided that $\a$ and $\stabilityConstant$ are sufficiently small.
\item If \eqref{volume constraint}-\eqref{volume constraint2} hold, then the inequality in \eqref{spanningProfilesConstant claim} holds if $\a,\hat{\a}$ and $\stabilityConstant$ are sufficiently small.
\end{Romlist}
For part (I), choose $\fv$ such that \eqref{volume constraint} fails.
Inequality \eqref{volume constraint} is an upper bound on the volume of bad boxes.
We will apply Lemma~\ref{bad boxes mixed bc} for $\psi\in\sF(\fv)$ with $\epsCG=1$ and $n=\sum_{l=1}^S f_l^0>N^{d-1}/\sqrt{K}$.
The positive terms in the exponential in Lemma~\ref{bad boxes mixed bc} are
\[
\b |\pd^\pm\L| + \b h  |\L| = \O(b_2^d N^{d-1}).
\]
The absolute value of the negative term is greater than $\cg K (N^{d-1}/\sqrt{K})$ which is a higher order of $N$ \eqref{K def}.
For $h$ sufficiently small,
\[
\varphi^{J,(+,-),h}_\L(\Psi=\psi)\le \exp(-\cg n K/2),
\]
and so by \eqref{size of sFfv},
\[
\varphi^{J,(+,-),h}_\L(\Psi\in\sF(\fv))\le \exp\left(n \O(\log N)-\cg nK/2\right).
\]
Whatever the value of the constant $\stabilityConstant>0$, for $h$ sufficiently small the right-hand side above is less than $\exp(-2\stabilityConstant b_2 N)$.

Now to part (II). Inequality \eqref{volume constraint3} is an upper bound on the volume of minus phase.
We can expect the majority of boxes to have label $+1$ because of Proposition~\ref{lem:stability}.

Recall the symbol $\epsCG$ used in the definition of the coarse graining. Let $\psi$ denote a label configuration.
Let $n^+$ count the number of phase labels $\psi(i)=+1$ such that $\BB_K(i)$ is also $\epsCG$-good. Similarly for $n^-$. Let $n^0$ count the number of $\epsCG$-bad boxes in $\HH_{1,S}$.
Note that
\[
\sum_{l=1}^S f_l^{-1} \le n^0+n^-.
\]
Let $M(\epsStab)$ refer to the $\mu^{J,(+,-),h}_\L$-event in Proposition~\ref{lem:stability}.
Under $M(\epsStab)$,
\begin{align}\label{fajskfh1}
\frac{1}{N^d m^*} \sum_{x\in\L}\s(x)\ge b_2^d-b_1^d-2\epsStab b_2^d+\O(K^{-1}).
\end{align}
For each box $\BB_K(i)$ counted by $n^-$ there are at least $(1-\epsCG)K^dm^*$ vertices in $\BB_K^\ddagger(i)\cap\BB_K(i)$.
For each box $\BB_K(i)$ counted by $n^+$ there are at most $(1+\epsCG)K^dm^*$ vertices in $\BB_K^\ddagger(i)\cap\BB_K(i)$.
There are at most $n^0K^d$ vertices in $\epsCG$-bad boxes.
The remaining vertices lie in clusters with diameter less than $K/2$. Small clusters only interact weakly with the magnetic field.
If an open cluster has volume of less than $(K/2)^d$ then the odds of it taking plus spin are at most $\exp(\b h(K/2)^d)$ to $1$ \eqref{weights}.
Conditional on $\Psi=\psi$, with probability $1-\exp(-b_2 N)$,
\begin{align}
\frac{1}{K^d m^*} \sum_{x\in\L}\s(x) \le n^+ (1+\epsCG) + \frac{n^0}{m^*} - n^- (1-\epsCG) + \left(\frac{N}{K}\right)^d \O(K^{-1}).
\end{align}
By the argument from part (I) we can assume that $n^0\le N^{d-1}/\sqrt{K}$. The number of $\epsCG$-good boxes in $\L$ is therefore
\begin{align}\label{fajskfh3}
n^++n^- = \left(\frac{N}{K}\right)^d  [b_2^d-b_1^d+\O(K^{-1})].
\end{align}
By \eqref{fajskfh1}-\eqref{fajskfh3},
\[
n^-\le \left(\frac{N}{K}\right)^d
[\epsStab b_2^d + \epsCG(b_2^d-b_1^d)/2 +\O(K^{-1}) ] .
\]
Taking $\epsStab=\a/2$ and $\epsCG$ sufficiently small, we see that \eqref{volume constraint3} holds with high probability. Taking $2\stabilityConstant< \min\{1,\energyStabilityConstant(\epsStab)\}$ we have completed part (II) of the proof of \eqref{spanningProfilesConstant claim}.

Now for part (III). Choose $\fv$ such that \eqref{volume constraint}-\eqref{volume constraint3} are satisfied but \eqref{volume constraint2} is not. Let $1\le k< l< m< n\le 4R$ and let $\psi\in\sF(\fv)$. See Figure~\ref{fig:partIII}.

\begin{figure}
\begin{center}
\begin{picture}(0,0)%
\includegraphics{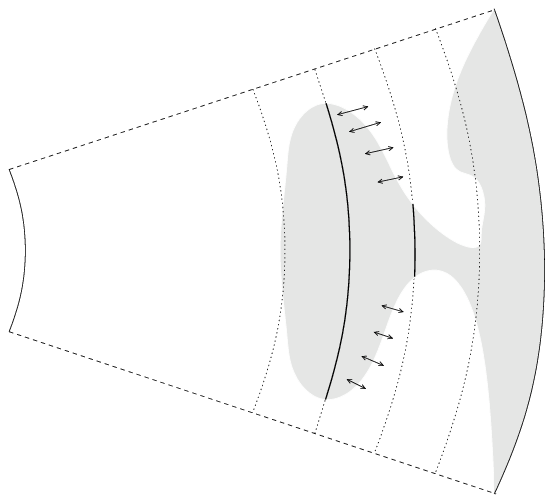}%
\end{picture}%
\setlength{\unitlength}{1326sp}%
\begingroup\makeatletter\ifx\SetFigFont\undefined%
\gdef\SetFigFont#1#2#3#4#5{%
  \reset@font\fontsize{#1}{#2pt}%
  \fontfamily{#3}\fontseries{#4}\fontshape{#5}%
  \selectfont}%
\fi\endgroup%
\begin{picture}(8220,7587)(661,-2506)
\put(6706,-2176){\makebox(0,0)[lb]{\smash{{\SetFigFont{10}{12.0}{\familydefault}{\mddefault}{\updefault}$\HH_k$}}}}
\put(5806,-1861){\makebox(0,0)[lb]{\smash{{\SetFigFont{10}{12.0}{\familydefault}{\mddefault}{\updefault}$\HH_l$}}}}
\put(4816,-1591){\makebox(0,0)[lb]{\smash{{\SetFigFont{10}{12.0}{\familydefault}{\mddefault}{\updefault}$\HH_m$}}}}
\put(3961,-1276){\makebox(0,0)[lb]{\smash{{\SetFigFont{10}{12.0}{\familydefault}{\mddefault}{\updefault}$\HH_n$}}}}
\put(7606,-2446){\makebox(0,0)[lb]{\smash{{\SetFigFont{10}{12.0}{\familydefault}{\mddefault}{\updefault}$\HH_1$}}}}
\put(8866,1604){\makebox(0,0)[lb]{\smash{{\SetFigFont{10}{12.0}{\familydefault}{\mddefault}{\updefault}$-$}}}}
\put(721,-106){\makebox(0,0)[lb]{\smash{{\SetFigFont{10}{12.0}{\familydefault}{\mddefault}{\updefault}$\HH_s$}}}}
\put(676,1514){\makebox(0,0)[lb]{\smash{{\SetFigFont{10}{12.0}{\familydefault}{\mddefault}{\updefault}$+$}}}}
\put(5221,1514){\makebox(0,0)[lb]{\smash{{\SetFigFont{10}{12.0}{\familydefault}{\mddefault}{\updefault}$A_1$}}}}
\put(6886,1424){\makebox(0,0)[lb]{\smash{{\SetFigFont{10}{12.0}{\familydefault}{\mddefault}{\updefault}$A_2$}}}}
\put(7741,3314){\makebox(0,0)[lb]{\smash{{\SetFigFont{10}{12.0}{\familydefault}{\mddefault}{\updefault}$U$}}}}
\end{picture}%
\end{center}
\caption{Part (III): The shaded region $U$ corresponds to the blocks with phase label $-1$.
The arrows indicate paths from $-1$ phase labels to $+1$ phase labels.
The black lines marking the intersection of the shaded region with $\HH_m$ and $\HH_l$ correspond to $A_1$ and $A_2$ in Lemma~\ref{area difference lemma}, respectively. The intersection of $\pd U$ with $\HH_{l,m}$ corresponds to the set $S$.
\label{fig:partIII}}
\end{figure}

We can apply Lemma~\ref{truncation} with $\Gamma$ equal to the set of mesoscopic boxes in $\HH_{k+1,n-1}$ in the neighborhood of a $0$ or a $-1$ box,
\begin{gather}\label{GammaIII}
\Gamma=\bigcup_{i\in I}\BB_K(i),\\
I = \{i:\exists j,\, \BB_K(i),\BB_K(j)\subset\HH_{k+1,n-1}, \psi(j)<1 \text{ and } \|i-j\|_\oo\le 1\}.\nonumber
\end{gather}
Note that $\Gamma$ is not necessarily connected.

Let $\psi\in\sF(\fv)$. Recall that the quantities $f_{\pd I}^+,f_{\pd I}^0,f_{\pd I}^-,f_I^\lra$ in the statement of Lemma~\ref{truncation} count the number of $+1$, $0$ and $-1$ phase labels in $\pd I$, and the maximum flow from plus to minus labels in $\Gamma$.
To estimate the quantity $f_I^\lra$ we need a geometric lemma.
Recall the definition of $\pd$ \eqref{pdU}.
\begin{lemma}\label{area difference lemma}
Suppose that $0<a_1<a_2$ and $U\subset\sA_\th$.
Let $A_i=U\cap \pd\sW_\th(a_i)$, $i=1,2$.
Let $S$ denote the portion of $\pd U$ contained in the closure of $\sW_\th(a_1,a_2)$.
Then \[\sH^{d-1}(S) \ge d^{-1/2}[\sH^{d-1}(A_1)-\sH^{d-1}(A_2)].\]
\end{lemma}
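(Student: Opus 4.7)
The plan is to use a radial retraction onto the inner Wulff boundary and compare areas via the area formula. Throughout, write $g$ for the Minkowski functional of $\sW_\th(1)$, so that $\pd\sW_\th(a)=\{g=a\}$ and $g$ is $1$-homogeneous. Define the retraction
\[
\Pi:\overline{\sW_\th(a_1,a_2)}\setminus\{0\}\to\pd\sW_\th(a_1),\qquad \Pi(\bx)=\frac{a_1}{g(\bx)}\bx.
\]
Note that $\Pi$ restricted to $\pd\sW_\th(a_2)$ is simply scaling by $a_1/a_2$.

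The first step is a geometric covering claim: $A_1\subset\Pi(A_2)\cup\Pi(S)$, up to an $\sH^{d-1}$-null set contained in $\pd^\star\sA_\th$. Indeed, take $\bx\in A_1$ lying in the (relative) interior of $\sA_\th$ and follow the ray $\{t\bx:t\ge 1\}$ outward. Because $\sA_\th$ is a cone and $\bx$ is in its interior, the whole segment to $(a_2/a_1)\bx\in\pd\sW_\th(a_2)$ stays in the interior of $\sA_\th$. Since $\bx\in U$, the segment either remains in $U$---in which case $(a_2/a_1)\bx\in A_2$ and $\Pi((a_2/a_1)\bx)=\bx$---or it exits $U$ at some $\bz=t\bx$ with $t\in[1,a_2/a_1]$, whence $\bz\in\pd U\cap\overline{\sW_\th(a_1,a_2)}=S$ (the homogeneity of $g$ gives $g(\bz)=ta_1\in[a_1,a_2]$) and $\Pi(\bz)=\bx$ by direct computation. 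The contribution of $A_1\cap\pd^\star\sA_\th$ has zero $(d-1)$-measure.

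The second step handles $\Pi(A_2)$: since $\Pi|_{\pd\sW_\th(a_2)}$ is scaling by $a_1/a_2\le 1$,
\[
\sH^{d-1}(\Pi(A_2))=(a_1/a_2)^{d-1}\sH^{d-1}(A_2)\le \sH^{d-1}(A_2).
\]
The third step bounds $\sH^{d-1}(\Pi(S))$ via the area formula,
\[
\sH^{d-1}(\Pi(S))\le\int_S J(\Pi|_S)(\bx)\,d\sH^{d-1}(\bx),
\]
where $J(\Pi|_S)$ is the $(d-1)$-dimensional Jacobian. Combining the three steps gives
\[
\sH^{d-1}(A_1)-\sH^{d-1}(A_2)\le \int_S J(\Pi|_S)\,d\sH^{d-1},
\]
so the lemma reduces to showing $J(\Pi|_S)\le\sqrt{d}$ almost everywhere on $S$.

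The main obstacle is this last Jacobian bound. At $\bx$, the differential $D\Pi(\bx)$ has a one-dimensional kernel $\RR\bx$ (by Euler's relation for the $1$-homogeneous $g$) and maps onto the level-set tangent $T_{\Pi(\bx)}\pd\sW_\th(a_1)=(\nabla g(\bx))^\perp$, acting by the scalar $a_1/g(\bx)\le 1$ on vectors already tangent to the level set. The delicate point is that for a general Wulff shape the radial direction $\bx$ is not orthogonal to the level set, so for a tangent $V=T_\bx S$ of generic orientation one must decompose $V$ using both the level-set tangent and $\nabla g$, and the outer normal of $\pd\sW_\th(a_1)$ enters through the support function. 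A convenient way to organise this is to use the radial chart $F(\bu,t)=(t/a_1)\bu$ from $\pd\sW_\th(a_1)\times[a_1,a_2]$ onto $\sW_\th(a_1,a_2)$: in these coordinates $\Pi$ is the projection onto the $\bu$ factor (so the projection is $1$-Lipschitz in the product metric), and the $\sqrt{d}$ factor is the worst-case distortion between the Euclidean metric on $\RR^d$ and the product metric, obtained by estimating the singular values of $dF$ on the $(d-1)$-dimensional tangent plane and using $a_1/g\le 1$ together with the Cauchy--Schwarz-type inequality $(\sum|\cos\alpha_i|)^2\le d\sum|\cos\alpha_i|^2=d$ applied to the angles made by the normal of $S$ with the orthogonal decomposition of $\RR^d$ into radial and level-set-tangent components. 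The contribution of multiple crossings of the ray by $\pd U$ in Step 1 can only increase the image multiplicity and hence is favourable.
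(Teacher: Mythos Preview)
Your overall strategy---radial retraction $\Pi$ onto $\pd\sW_\th(a_1)$, the covering $A_1\subset\Pi(A_2)\cup\Pi(S)$, and reduction to a Jacobian bound---is exactly the paper's approach. Steps 1 and 2 are fine. The gap is in your justification of $J(\Pi|_S)\le\sqrt d$.

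Your argument appeals to ``the orthogonal decomposition of $\RR^d$ into radial and level-set-tangent components'' and a Cauchy--Schwarz inequality on direction cosines. But the radial direction $\hat\bx$ and the level-set tangent $(\nabla g)^\perp$ are \emph{not} orthogonal for a general Wulff shape; their angle is precisely what controls the Jacobian, and for an arbitrary convex body that angle can be arbitrarily close to $\pi/2$, making the Jacobian blow up. Concretely, write $D\Pi=\lambda M$ with $\lambda=a_1/g(\bx)\le 1$ and $M=I-g(\bx)^{-1}\bx\,\nabla g(\bx)^T$ the oblique projection onto $(\nabla g)^\perp$ along $\bx$. For a hyperplane $H=(\bn^S)^\perp$, an elementary ``shadow'' computation gives
\[
J(\Pi|_H)(\bx)=\lambda^{d-1}\,\frac{|\bn^S\cdot\hat\bx|}{|\bn^W\cdot\hat\bx|},\qquad \bn^W=\frac{\nabla g}{|\nabla g|}.
\]
The numerator is at most $1$, but the denominator requires a lower bound, and nothing in your Cauchy--Schwarz manipulation supplies one.

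What the paper uses---and what you are missing---is the specific geometric fact that for the lattice-symmetric Wulff shape $\sW_{2\pi}$ (hence for $\sW_\th=\sA_\th\cap\sW_{2\pi}$ on $\pd\sW_\th$), the angle between the position vector $\bx$ and the outward normal $\bn^{\sW_\th}(\bx)$ is at most $\cos^{-1}(d^{-1/2})$, i.e.\ $|\bn^W\cdot\hat\bx|\ge d^{-1/2}$. Plugging this into the Jacobian formula gives $J(\Pi|_S)\le\sqrt d$, and the lemma follows. Once you invoke that angle bound (which is where the constant $d^{-1/2}$ actually comes from), your proof is complete and coincides with the paper's.
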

\begin{proof}
Let $P$ denote the linear projection $x\to (a_1/a_2)x$. $S$ must separate $A_1\sm P A_2$ from $(P^{-1}A_1) \sm A_2$. The surface area of $A_1\sm P A_2$ is at least $\sH^{d-1}(A_1)-\sH^{d-1}(A_2)$ as $P$ is a contraction.
Let $\bx\in\pd\sW_\th$.
Recall that $\sW_\th=\sA_\th\cap\sW_{2\pi}$; by the symmetry of $\sW_{2\pi}$, the angle between the vector $\bx$ and the normal vector $\bn^{\sW_\th}(\bx)$ is at most $\cos^{-1}(d^{-1/2})$.
\end{proof}
\noindent Lemma~\ref{area difference lemma} implies that there is a constant $c\in(0,1)$ such that
\[
f_I^\lra \ge c f_m^{-1} - c^{-1}(f_l^{-1}+f_l^0).
\]
Given $\hat{\a}$, if $\a$ is sufficiently small then we can choose $k< l< m< n$ such that
\begin{align}
f_k^0+f_k^{-1}&\le (b_2N/K)^{d-1}\times \hat{\a}\truncationConstantB c /(6\truncationConstant),\nonumber\\
f_l^0+f_l^{-1}&\le (b_2N/K)^{d-1}\times \hat{\a} c^2/6,\nonumber\\
f_m^{-1}&\ge (b_2N/K)^{d-1}\times \hat{\a},\label{bottle-neck}\\
f_n^0+f_n^{-1}&\le (b_2N/K)^{d-1}\times \hat{\a}\truncationConstantB c /(6\truncationConstant).\nonumber
\end{align}
Lemma~\ref{truncation} gives
\begin{align}\label{trunc_claim-}
\varphi^{J,(+,-),h}_\L(\mathrm{Int}_\psi \cap \mathrm{Ext}_\psi) &\le\exp\left(\truncationConstant (f_k^0+f_k^{-1}+f_n^0+f_n^{-1})K^{d-1} -\truncationConstantB f_I^\lra K^{d-1}\right)\nonumber\\
                                                                 &\le \exp(-\hat{\a}\truncationConstantB c(b_2N)^{d-1}/2).
\end{align}
We complete part (III) by checking that the right hand side of \eqref{trunc_claim-}, when multiplied by the size of the set $\sF(\fv)$ [cf. \eqref{size of sFfv} and \eqref{volume constraint}] is less than $\exp(-2\stabilityConstant b_2 N)$.

Before we start part (IV) of the proof of \eqref{spanningProfilesConstant claim}, we will give an isoperimetric inequality for $\pd\sW_\th$. The surface $\pd\sW_\th$ is $d-1$ dimensional, so subsets of $\pd\sW_\th$ have $d-2$ dimensional boundaries.
\begin{lemma}
\label{isoperimetric}
There is a positive constant $u=u(\th)$ such that for any $A\subset \pd\sW_\th$,
\[
\frac{\sL^{d-1}[A]}{\sL^{d-1}[\pd\sW_\th]}\le \frac12 \implies \sH^{d-2}[\pd A]  \ge u \left(\sL^{d-1}[A]\right)^{(d-2)/(d-1)}.
\]
\end{lemma}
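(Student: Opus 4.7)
The strategy is to reduce the inequality to a classical relative isoperimetric inequality on a bounded Lipschitz domain in $\RR^{d-1}$ by means of a bi-Lipschitz parametrization of $\pd\sW_\th$ whose Lipschitz constants depend only on $\b$ and $\th$. By Proposition~\ref{prop-tauq-pos}, $c\t(\be_1) \le \t(\bn) \le C\t(\be_1)$, so $\sW_{2\pi}$ is a convex body containing a Euclidean ball of some radius $r_- > 0$ about the origin and contained in a Euclidean ball of radius $r_+$, with $r_+/r_-$ depending only on $\b$. In particular $\sW_\th = w_\th\,(\sA_\th \cap \sW_{2\pi})$ is a convex body with bounded aspect ratio (up to the scale $w_\th$).

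Pick a reference point $\bx_0$ in the interior of $\sW_\th$; for $\th < 2\pi$ one can take $\bx_0 = \eps\be_1$ with $\eps$ small so that $\bx_0$ lies at positive distance from both $\pd\sW_{2\pi}$ and $\pd^\star\sA_\th$. Radial projection from $\bx_0$ yields a homeomorphism $\phi\colon D_\th \to \pd\sW_\th$, where $D_\th \subset \sS^{d-1}$ is the set of directions $\bu$ for which the ray $\bx_0 + t\bu$ first exits $\sW_\th$ through $\pd\sW_{2\pi}$; for $\th = 2\pi$ we have $D_\th = \sS^{d-1}$. Because $\sW_{2\pi}$ is a convex body with bounded aspect ratio, the radial function $t(\bu) = \sup\{t : \bx_0 + t\bu \in \sW_{2\pi}\}$ is Lipschitz in $\bu$ with upper and lower bounds depending only on $\b$, so $\phi$ is bi-Lipschitz with constants depending only on $\b$ and $\th$. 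Moreover, $\phi$ maps $\pd D_\th$ onto $\pd^\star\sA_\th \cap \pd\sW_{2\pi}$, so under $\phi$ the definition \eqref{pdU} matches the relative boundary $\pd^\star B \cap D_\th$ with $B = \phi^{-1}(A)$.

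Because $\phi$ and $\phi^{-1}$ are Lipschitz, both $\sL^{d-1}$ and $\sH^{d-2}$ transform with bounded distortion. The classical relative isoperimetric inequality on the bounded Lipschitz domain $D_\th \subset \sS^{d-1}$ (equivalently, a Lipschitz domain in $\RR^{d-1}$ via a local chart; see e.g.\ Maggi's book on sets of finite perimeter, or Federer's treatise) furnishes a constant $c_\th > 0$ such that any Borel set $B \subset D_\th$ with $\sL^{d-1}(B) \le \sL^{d-1}(D_\th)/2$ satisfies
\[
\sH^{d-2}(\pd^\star B \cap D_\th) \ge c_\th\,\bigl(\sL^{d-1}(B)\bigr)^{(d-2)/(d-1)}.
\]
Applying this with $B = \phi^{-1}(A)$ and transferring back through $\phi$, the bi-Lipschitz bounds absorb into a new constant $u(\th) > 0$, giving the claimed estimate.

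The only nontrivial step is verifying that radial projection from $\bx_0$ is genuinely bi-Lipschitz with constants depending only on $\b$ and $\th$. This is where the bounded aspect ratio of $\sW_{2\pi}$ (Proposition~\ref{prop-tauq-pos}) is essential: it guarantees that the radial function $t(\cdot)$ is Lipschitz on $\sS^{d-1}$, so the surface $\pd\sW_\th$ admits a global Lipschitz parametrization by a spherical cap, with no need for smoothness of the surface tension $\t$. The case $\th = 2\pi$ reduces to the standard spherical isoperimetric inequality applied to $\sS^{d-1}$, transported through $\phi$ to $\pd\sW_{2\pi}$.
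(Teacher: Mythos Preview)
Your proof is correct and follows essentially the same route as the paper's: both arguments use a radial projection from an interior point to identify $\pd\sW_\th$ with a model domain (the sphere $\sS^{d-1}$ when $\th=2\pi$, a spherical cap or ball when $\th<\pi$), argue that this projection has bounded distortion because of the convexity and bounded aspect ratio of $\sW_{2\pi}$, and then invoke the standard (relative) isoperimetric inequality on the model domain. The paper's version is terser---it cites Schneider for the bounded Radon--Nikodym derivatives and L\'evy's inequality for the sphere---while you spell out the bi-Lipschitz property of the radial map and appeal to the relative isoperimetric inequality on a Lipschitz domain, but the underlying mechanism is the same.
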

\begin{proof}
First consider the case $\th=2\pi$.
Let $w$ denote the minimum $L_\oo$-distance between $\pd \sW_\th$ and the origin \eqref{w}.
By convexity, $\pd\sW_\th$ lies inside the $L_2$-annulus with inner radius $w$ and outer radius $wd$.

Consider the projection $P$ of $\pd\sW_\th$ onto the unit sphere $\sS^{d-1}$.
Associated with $P$ are two Radon-Nikodym derivatives, one for the $d-1$ dimensional Lebesgue measures on the domain and codomain, and one for the $d-2$ dimensional Hausdorff measures on the domain and codomain.
By symmetry and convexity (see \cite[Theorem~2.2.4]{Schneider}) both Radon-Nikodym derivatives are bounded away from $0$ and $\oo$. The result follows from L\'evy's isoperimetric inequality for the unit sphere.

A similar argument works when $0<\th<\pi$. Consider a projection from $\sW_\th$ to the $(d-1)$-dimensional unit ball.
\end{proof}
\noindent
Now for part (IV) of the proof of \eqref{spanningProfilesConstant claim}.
Let $\fv$ denote a spanning profile and let $\psi\in\sF(\fv)$.
Choose $k$ and $n$ such that $R\le k \le 2R \le n \le 3R$.
We will apply Lemma~\ref{truncation-} with $\Gamma$ defined according to \eqref{GammaIII}.

If \eqref{volume constraint2} holds with $\hat{\a}$ sufficiently small then Lemma~\ref{isoperimetric} implies that for some constant $c>0$,
\[
f_I^\lra \ge c \sum_{l=k+1}^{n-1} (f_l^{-1})^{(d-2)/(d-1)}.
\]
Let
\[
g_l:= \truncationConstantB f_l^0 K+ \truncationConstantB c (f_l^{-1})^{(d-2)/(d-1)} K^{d-1}.
\]
Lemma~\ref{truncation-} gives
\begin{align}\label{trunc_claim}
&\varphi^{J,(+,-),h}_\L(\mathrm{Int}_\psi\cap \mathrm{Ext}_\psi)\\
&\le\exp\left(\truncationConstant (f_k^0+f_k^{-1}+f_n^0+f_n^{-1})K^{d-1} + \b h | \Gamma| 4^d -\sum_{l=k+1}^{n-1}g_l\right).\nonumber
\end{align}
The term corresponding to the magnetic field in \eqref{trunc_claim} is bounded by \eqref{volume constraint2}. If $\hat\a$ is sufficiently small,
\[
\b h |\Gamma| 4^d \le \b h (12K)^d \sum_{l=k+1}^{n-1} f_l^0 + f_l^{-1} \le \frac{1}{4} \sum_{l=k+1}^{n-1} g_l.
\]
With reference to \eqref{size of sFfv} and the assumption that $h$ is small, the number of ways of choosing $(\psi(i):\BB_K(i)\subset \HH_{k,n})$ is at most
\[
\sum_{l=k}^n \exp(f_l^0 \O(\log N))\le  \exp\left((f_k^0+f_n^0)\O(\log N) + \frac14 \sum_{l=k+1}^{n-1} g_l\right).
\]
Thus if $h$ is sufficiently small,
\begin{align}\label{trunc_claim2}
\varphi^{J,(+,-),h}_\L(\Psi\in\sF(\fv)) \le \exp\left(2\truncationConstant (f_k^0+f_k^{-1}+f_n^0+f_n^{-1})K^{d-1}-\frac12 \sum_{l=k+1}^{n-1}g_l\right).
\end{align}
In our notation, \cite[(4.40)]{Stability-of-interfaces} states that if $\fv$ satisfies \eqref{volume constraint}-\eqref{volume constraint3} with $\a$ sufficiently small then there is a positive constant $c$ such that
\begin{align}
\label{4.40}
&\min_{R<k<2R} \left\{2\truncationConstant (f_k^0+f_k^{-1}) K^{d-1} - \frac12 \sum_{l=k+1}^{2R} g_l \right\} \le -c b_2 N, \text{ and}\\
&\min_{2R<n<3R} \left\{2\truncationConstant (f_n^0+f_n^{-1}) K^{d-1} - \frac12 \sum_{l=2R+1}^{n-1} g_l\right\} \le -c b_2 N.\nonumber
\end{align}
Part (IV) of the proof of inequality \eqref{spanningProfilesConstant claim} follows from \eqref{trunc_claim2} and \eqref{4.40}. This completes the proof of Proposition~\ref{decay-bc}.
\end{proof}

We will give three analogous results below. They follow, mutatis mutandis, from the proof of Proposition~\ref{decay-bc}.
Consider first the context of Proposition~\ref{lem:stabilityR}.

\begin{proposition}
\label{decay-bc-R}
Recall the event $\sC$ defined by \eqref{mu hat}.
Suppose that $b_1=0$ and $b_2(1-\epsHS)> \Bcritical+\epsC$.
Given $\epsC$ and $\epsHS$, with uniformly high $\QQ_\th$-probability
\begin{align*}
&\varphi^{J,-,h}_\L (\pd^-\L\lra\WW_\th(b_1,b_2(1-\epsHS))\mid\sC)\le \exp (-\stabilityConstant b_2/h).
\end{align*}
\end{proposition}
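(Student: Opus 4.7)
The plan is to follow, essentially step by step, the layered profile analysis of Proposition~\ref{decay-bc}, with the plus droplet forced by $\sC$ playing the role previously played by the plus inner boundary $\pd^+\L$. I would introduce the same mesoscopic shells $\HH_1,\dots,\HH_{4R}$ occupying the annulus $\WW_\th(b_2(1-\epsHS), b_2)$, the same phase labels $\Psi$ from \eqref{phase labels}, and the same notion of a spanning profile $\fv$. Since the number of spanning profiles is $\exp(o(N))$, it suffices to exhibit $\stabilityConstant=\stabilityConstant(\epsHS,\epsC)>0$ and a uniformly $\QQ_\th$-typical event $\sJ$ such that for every spanning $\fv$ and $J\in\sJ$,
\[
\varphi^{J,-,h}_\L(\Psi\in\sF(\fv)\mid\sC)\le \exp(-2\stabilityConstant b_2 N).
\]

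The four-part analysis then carries through with two modifications. Parts (I), (III) and (IV) depend only on the coarse-graining (Lemma~\ref{bad boxes mixed bc}) and the truncation machinery (Lemmas~\ref{truncation}, \ref{truncation-}, Corollary~\ref{cor:coarse graining} and Lemma~\ref{isoperimetric}), none of which is sensitive to the choice of boundary condition; they yield the same per-profile bound on $\varphi^{J,-,h}_\L(\Psi\in\sF(\fv))$ that they yielded on $\varphi^{J,(+,-),h}_\L(\Psi\in\sF(\fv))$ in the original proof. Part (II), the minus-phase volume bound, was the only place where the specific boundary condition entered, and there I would substitute Proposition~\ref{lem:stabilityR} for Proposition~\ref{lem:stability}. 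Its hypothesis $b_2(1-\epsStab)>\Bcritical+\epsC$ is precisely what we are given (for $\epsStab<\epsHS$), and its conclusion feeds into the same count of $\epsCG$-good plus versus minus boxes in terms of the bulk magnetization $(N^d m^*)^{-1}\sum_{x\in\L}\s(x)$ to produce inequality \eqref{volume constraint3} with uniformly high $\QQ_\th$-probability under $\hm_\L$.

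The delicate step is converting the bounds on $\varphi^{J,-,h}_\L$ coming out of parts (I), (III) and (IV) into bounds on the conditional measure. A brute-force division by $\mu^{J,-,h}_\L(\sC)\ge\exp(-O(1)/h^{d-1})$ (using the estimate established inside the proof of Proposition~\ref{lem:stabilityR}) is sufficient when $d=2$, but too expensive when $d\ge 3$. Instead I would use part (II) to produce, with uniformly high probability under $\hm_\L$, a connected surface of $+1$-labeled mesoscopic boxes inside the middle layers $\HH_R,\dots,\HH_{3R}$. Conditional on such a ``plus fence'', the DLR property together with the monotonicity of Section~\ref{sec:prop} shows that the restriction of $\hm_\L$ to the outer sub-annulus spanned by $\HH_1,\dots,\HH_{R}$ is stochastically dominated by $\mu^{J,(+,-),h}$ on that sub-annulus with inner radius roughly $b_2(1-\epsHS/2)$ and outer radius $b_2$. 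Proposition~\ref{decay-bc} applied to this sub-annulus then yields the desired exponential decay, with a $\stabilityConstant$ independent of the conditioning.

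The main obstacle is the construction of the plus fence: upgrading the volumetric domination of plus over minus phase (which is all that part (II) directly provides) into an \emph{actual} connected surface of $+1$-labeled mesoscopic boxes spanning some individual layer $\HH_l$ with $R\le l\le 3R$. I would handle this by averaging over the $2R$ candidate middle layers — at least one of them must inherit a $+1$-density close to one from part (II) — and then applying the coarse-graining bounds of Proposition~\ref{prop:coarse graining} at a smaller scale inside that layer to pass from density to connectivity, much as in the proof of Proposition~\ref{decay-bc}.
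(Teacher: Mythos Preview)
Your core approach matches the paper's exactly: the paper's entire proof is the single sentence ``In part (II) of the proof of Proposition~\ref{decay-bc} replace Proposition~\ref{lem:stability} with Proposition~\ref{lem:stabilityR}.'' That is, run the layered profile argument verbatim, leaving parts (I), (III), (IV) untouched and swapping in the conditional stability estimate in part (II) --- precisely what you outline in your first two paragraphs.

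Where you diverge is in your third and fourth paragraphs: you flag the passage from unconditional bounds on $\varphi^{J,-,h}_\L(\Psi\in\sF(\fv))$ to conditional bounds on $\varphi^{J,-,h}_\L(\Psi\in\sF(\fv)\mid\sC)$ as a genuine obstacle for $d\ge3$, and you propose an additional plus-fence-plus-DLR argument to circumvent it. The paper does not take this route and does not discuss the issue at all. The intended reading is presumably that Lemmas~\ref{truncation} and~\ref{truncation-} are proved by conditioning on $\om_{\mathrm{ext}}$ and bounding $\hat\phi^h(\mathrm{Int}'_\psi)$ \emph{uniformly over the exterior}; since the hypothesis $b_2(1-\epsHS)>\Bcritical+\epsC$ places $\sC$ entirely in the region outside the truncation window $\Gamma_1$, the conditioning on $\sC$ can be absorbed into the exterior conditioning already present in those proofs, and no factor of $\mu^{J,-,h}_\L(\sC)^{-1}$ is ever paid. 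Your brute-force division concern arises only if you use the truncation lemmas as black boxes; the paper implicitly relies on their internal structure instead. Your plus-fence detour is a correct alternative way to handle this, but it is considerably more elaborate than what the paper intends, and the ``main obstacle'' you identify (upgrading volumetric plus-domination to an actual connected plus surface) is a nontrivial extra step that the paper simply does not need.
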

\begin{proof}
In part (II) of the proof of Proposition~\ref{decay-bc} replace Proposition~\ref{lem:stability} with Proposition~\ref{lem:stabilityR}.
\end{proof}

In the context of Proposition~\ref{lem:stability3}, the plus phase is dominant and so the minus boundary does not affect the bulk of the domain.

\begin{proposition}
\label{decay-bc3}
There is a constant $\stabilityConstant'=\stabilityConstant'(\epsHS,\Bmax)>0$ such that with uniformly high $\QQ_\th$-probability
\begin{align*}
\phi^{J,(-,+),h}_\L(\pd^-\L\lra \WW_\th(b_1+\epsHS,b_2)) \le \exp (-\stabilityConstant'/h).
\end{align*}
\end{proposition}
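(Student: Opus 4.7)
The plan is to mirror the proof of Proposition~\ref{decay-bc} with the inner and outer parts of the annulus interchanged. Under $(-,+)$ boundary conditions, Proposition~\ref{lem:stability3} tells us that the plus phase is dominant in the bulk of $\L$, so the minus phase must be confined to a thin neighborhood of the inner hole. We therefore expect a separating surface of $+1$-labeled mesoscopic boxes somewhere inside the shell $\WW_\th(b_1,b_1+\epsHS)$, and any such surface disconnects $\pd^-\L$ from $\WW_\th(b_1+\epsHS,b_2)$ in the random-cluster sense.

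Concretely, tile the shell $\WW_\th(b_1,b_1+\epsHS)$ with mesoscopic layers $\HH_l$, $l=1,\dots,4R$, each $2$ to $2d$ boxes thick, with $R=\lfloor \epsHS Nw/(8K)\rfloor$ and $w$ as in \eqref{w}; take $\HH_1$ adjacent to $\pd^-\L$ and $\HH_{4R}$ adjacent to the outer annulus. Call a profile $\fv=(f_l^s)$ \emph{spanning} if $f_l^0+f_l^{-1}>0$ for every $l\le 4R$; otherwise some $\HH_l$ already forms a surface of $+1$ boxes that blocks any path from $\pd^-\L$ to $\WW_\th(b_1+\epsHS,b_2)$ in the coarse graining. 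Counting spanning profiles as in \eqref{size of sFfv}, it suffices to produce $\stabilityConstant'=\stabilityConstant'(\epsHS,\Bmax)>0$ and a uniformly high-$\QQ_\th$-probability event $\sJ$ on which
\begin{equation*}
\varphi^{J,(-,+),h}_\L(\Psi\in\sF(\fv))\le \exp(-2\stabilityConstant'/h)
\end{equation*}
for every spanning profile $\fv$. The four cases (I)--(IV) of Proposition~\ref{decay-bc} then transfer: case~(I) uses Lemma~\ref{bad boxes mixed bc} unchanged; in case~(II), Proposition~\ref{lem:stability3} (with $\epsStab<\epsHS$) replaces Proposition~\ref{lem:stability} and bounds the total volume of $\epsCG$-good $-1$ boxes by $(b_1+\epsStab)^d-b_1^d$, which is small compared to the shell volume; case~(III) applies Lemma~\ref{area difference lemma} to slices $\pd\sW_\th(a)\cap U$ for $a\in[b_1,b_1+\epsHS]$, using that concentric Wulff boundaries are related by a contraction; case~(IV) invokes the isoperimetric inequality of Lemma~\ref{isoperimetric} on subsets of $\pd\sW_\th(a)$ in the same range of $a$. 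Uniformity over $(b_1,b_2,b)$ follows from the usual discretization argument employed after Proposition~\ref{lower bound}.

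The main obstacle is tracking the geometric constants. The area of the separating surface is now governed by the inner shell, whose geometry depends on $b_1\le\Bmax$: both the count of spanning profiles and the bookkeeping in cases~(II)--(IV) involve $\sH^{d-1}(\pd\sW_\th(b_1))$ and $\sL^d(\WW_\th(b_1,b_1+\epsHS))$. Uniformity over $b_1\in[0,\Bmax]$ forces the constant $\stabilityConstant'$ to depend on $\Bmax$; in particular, when $b_1$ is small the inner surface is small and the exponential gain from a separating surface is correspondingly modest, which is what dictates the weaker rate $\exp(-\stabilityConstant'/h)$ announced here. The argument in fact produces a surface-order bound of the form $\exp(-c(\epsHS,\Bmax)h^{-(d-1)})$ when $d\ge 3$, but this strengthening is not required in the sequel.
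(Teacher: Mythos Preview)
Your proposal is essentially the paper's own approach: mirror the proof of Proposition~\ref{decay-bc} with the inner and outer boundaries swapped, tile the inner shell into $4R$ mesoscopic layers with $R=\lfloor \epsHS Nw/(8K)\rfloor$, and run cases (I)--(IV) with Proposition~\ref{lem:stability3} standing in for Proposition~\ref{lem:stability} in part~(II). The paper additionally defines layers out to $S$ covering the whole annulus and replaces $b_2N/K$ by $N/K$ in the analogues of \eqref{volume constraint3}--\eqref{volume constraint2}, but this is bookkeeping rather than a different idea.

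Two remarks. First, the paper records one extra observation you do not mention: because $\stabilityConstant'$ is allowed to depend on $\Bmax$, in part~(III) one may use Lemma~\ref{truncation-} in place of Lemma~\ref{truncation}, absorbing the magnetic-field term $\b h|\Gamma|4^d$ by choosing $\a$ (hence $|\Gamma|$) small. This is not needed for Proposition~\ref{decay-bc3} itself, but it is exactly the flexibility required when the roles of $+$ and $-$ are swapped in Proposition~\ref{decay-bc2}, where the $\min\{f^0_{\pd I}+f^-_{\pd I},f^0_{\pd I}+f^+_{\pd I}\}$ in Lemma~\ref{truncation-} becomes essential.

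Second, your closing claim that the argument ``in fact produces a surface-order bound of the form $\exp(-c(\epsHS,\Bmax)h^{-(d-1)})$ when $d\ge 3$'' is not supported by the proof. Parts~(I) and~(III) do yield exponents of higher order in $N$, but the decisive part~(IV) relies on the telescoping inequality \eqref{4.40}, whose output is only linear in $N\approx 1/h$; the $K^{d-1}$ factors there are per-layer weights, not the overall rate. So the method delivers $\exp(-c/h)$ and no better, which is precisely what the proposition states.
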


\begin{proof}
Proposition~\ref{decay-bc3} differs from Proposition~\ref{decay-bc} in that it shows that the inner (rather than the outer) boundary condition has limited influence. Let
\begin{align*}
&\HH_l=\WW_\th\left(b_1+\frac{2(4R-l)K}{wN},\ b_1+\frac{2(4R+1-l)K}{wN}\right),\q l=1,\dots,4R,\\
&\HH_l=\WW_\th\left(b_1+\frac{2(l-1)K}{wN},\ b_1+\frac{2 l K}{wN}\right),\q l=4R+1,\dots,S,
\end{align*}
with $R=\lfloor \epsHS N w/(8 K)\rfloor$ and $S=\lfloor (b_2-b_1)Nw/(2K)\rfloor$. The proof of Proposition~\ref{decay-bc}, mutandis mutandis, shows that a surface of $+1$ boxes separates $\HH_1$ from $\HH_{4R}$. In \eqref{volume constraint3}-\eqref{volume constraint2}, replace $b_2N/K$ with $N/K$. Proposition~\ref{lem:stability3} replaces Proposition~\ref{lem:stability} in part (II) of the proof.

Notice that the proof of part (III) has become slightly more flexible; the additional flexibility will be important below in the proof of Proposition~\ref{decay-bc2}. The quantity $\stabilityConstant'$ is allowed to depend on $\Bmax$. This means that Lemma~\ref{truncation-} can be used in place of Lemma~\ref{truncation}; the extra term due to the magnetic field can be controlled by taking $\a$, and therefore $|\Gamma|$, sufficiently small.

\end{proof}

Consider the context of Proposition~\ref{lem:stability2}. The minus phase is dominant so the plus boundary does not affect the bulk of the domain.
\begin{proposition}
\label{decay-bc2}
Suppose that $\mathsf{E}^\th(b_1+\epsHS)<\mathsf{E}^\th(b_2)$. There is a constant $\stabilityConstant''=\stabilityConstant''(\epsHS)>0$ such that with uniformly high $\QQ_\th$-probability
\begin{align*}
\phi^{J,(+,-),h}_\L(\pd^+\L\lra \WW_\th(b_1+\epsHS,b_2)) \le \exp (-\stabilityConstant''/h).
\end{align*}
\end{proposition}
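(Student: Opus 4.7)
The plan is to mirror the proof of Proposition~\ref{decay-bc}, with the roles of $+$ and $-$ exchanged, and with Lemma~\ref{truncation-} substituted for Lemma~\ref{truncation} exactly as in the proof of Proposition~\ref{decay-bc3}. Since $\mathsf{E}^\th(b_1+\epsHS)<\mathsf{E}^\th(b_2)$, Proposition~\ref{lem:stability2} tells us that under $\mu^{J,(+,-),h}_\L$ the minus phase occupies nearly all of $\L$, so the goal is to show that a surface of $-1$-labelled mesoscopic boxes already forms in a thin collar just outside $\pd^+\L$ and blocks its connection to the bulk.

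First I would tile the collar $\WW_\th(b_1,b_1+\epsHS)$ by layers emanating outward from the inner boundary:
\[
\HH_l=\WW_\th\!\left(b_1+\tfrac{2(l-1)K}{wN},\ b_1+\tfrac{2lK}{wN}\right),\qq l=1,\dots,S,
\]
with $R=\lfloor \epsHS Nw/(8K)\rfloor$, $S=\lfloor(b_2-b_1)Nw/(2K)\rfloor$, and $w$ from \eqref{w}. On the event $\{\pd^+\L\lra\WW_\th(b_1+\epsHS,b_2)\}$ no separating surface of $-1$-boxes exists in the first $4R$ layers, i.e.\ the profile $\fv=(f_l^s)$ satisfies $f_l^{+1}+f_l^0>0$ for every $l\in\{1,\dots,4R\}$. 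As in Proposition~\ref{decay-bc}, the number of such profiles is subexponential in $N$, so it suffices to bound $\varphi^{J,(+,-),h}_\L(\Psi\in\sF(\fv))$ uniformly in each such $\fv$.

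Next I would run, mutatis mutandis, the four-part analysis of Proposition~\ref{decay-bc}. Part (I), too many $0$-boxes, follows from Lemma~\ref{bad boxes mixed bc} without change. In part (II) I would invoke Proposition~\ref{lem:stability2} in place of Proposition~\ref{lem:stability}: because the bulk plus-mass under $\mu^{J,(+,-),h}_\L$ is close to $b_1^d$ rather than $b_2^d$, a profile with $\sum_l f_l^{+1}\ge \a(b_2N/K)^d$ has super-exponentially small probability. Parts (III) and (IV) are the bottleneck and isoperimetric estimates applied to the $+1$-surface rather than the $-1$-surface; Lemma~\ref{area difference lemma} and Lemma~\ref{isoperimetric} are symmetric under this swap and yield the same cut-flow and $d{-}2$-dimensional surface-area lower bounds. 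Throughout (III)--(IV) I would appeal to Lemma~\ref{truncation-} rather than Lemma~\ref{truncation}, exactly as in Proposition~\ref{decay-bc3}.

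The main obstacle is the magnetic-field correction $\b h|\Gamma|4^d$ that appears in Lemma~\ref{truncation-} but was absent from Lemma~\ref{truncation}; without it, the argument would be essentially verbatim that of Proposition~\ref{decay-bc}. The resolution, already used for Proposition~\ref{decay-bc3}, is to choose the thresholds $\a,\hat\a$ in the analogues of \eqref{volume constraint3}--\eqref{volume constraint2} small enough that $|\Gamma|$ is a small fraction of the collar volume, so that $\b h|\Gamma|4^d$ is dominated by the bottleneck savings $\truncationConstantB f_I^\lra K^{d-1}$ and by the isoperimetric savings $\truncationConstantB \sum_l(f_l^{+1})^{(d-2)/(d-1)} K^{d-1}$. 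Since the collar has thickness of order $\epsHS$ (independent of $\Bmax$), this bookkeeping keeps $\stabilityConstant''$ dependent only on $\epsHS$.
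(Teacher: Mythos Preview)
Your proposal is correct and essentially matches the paper's proof, which also derives the result from Proposition~\ref{decay-bc3} by swapping the roles of plus and minus (keeping the sign of the magnetic field), using Proposition~\ref{lem:stability2} in part~(II) and relying on Lemma~\ref{truncation-} so that the boundary cost is $\truncationConstant(f_{\pd I}^0+f_{\pd I}^+)K^{d-1}$; the magnetic-field term is absorbed exactly as you describe. One small discrepancy: following the paper's proof of Proposition~\ref{decay-bc3} you should replace the scale $b_2N/K$ in \eqref{volume constraint3}--\eqref{volume constraint2} by $N/K$, and note that the paper actually allows the resulting constant to depend on $\Bmax$ (as $\stabilityConstant'$ does in Proposition~\ref{decay-bc3}), so your final claim that the collar thickness alone forces $\stabilityConstant''$ to be $\Bmax$-independent is stronger than what the paper establishes.
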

\begin{proof}
The proof can be obtained from the proof of Proposition~\ref{decay-bc3} by swapping the roles of plus and minus. The sign of the magnetic field has to stay the same, but, for example, in \eqref{volume constraint3} replace $f_l^{-1}$ with $f_l^{+1}$, etc. Proposition~\ref{lem:stability2} replaces Proposition~\ref{lem:stability3} in part (II) of the proof.
\end{proof}

\subsection{Hausdorff stability implies spatial mixing}
\label{sec:mixing}
In Proposition~\ref{decay-bc2} we showed that under $\QQ_\th [\mu^{J,(+,-),h}_\L]$, with high probability, a surface of $-1$ mesoscopic blocks separates the region $\WW_\th'(b_1+\epsHS,b_2)$ from the plus boundary $\pd^+\L$. In two dimensions, by planar duality, this implies that there are no Ising spin-clusters connecting $\pd^+\L$ to $\WW_\th'(b_1+\epsHS,b_2)$. By the Ising model's domain Markov property, and monotonicity, we can compare $\QQ_\th [\mu^{J,(+,-),h}_\L]$ to $\QQ_\th [\mu^{J,(-,-),h}_\L]$ in $\WW_\th'(b_1+\epsHS,b_2)$.

In contrast in higher dimensions, especially when close to the critical temperature, the Ising spin-cluster associated with $\pd^-\L$ under $\mu^{J,(+,-),h}_\L$ may be much larger than the cluster associated with $\pd^-\L$ under the random-cluster representation $\phi^{J,(+,-),h}_\L$. We cannot make such a comparison. Instead we will appeal to a spatial mixing property, stated below as Proposition~\ref{mixing prop}.

Much is known about the spatial mixing properties of the Ising model in the absence of a magnetic field.
The difficulty here is that the magnetic field is acting to weaken the dominant phase.

We conjecture that Proposition~\ref{mixing prop}, and the coarse graining property, holds for all $\b>\bc$. 
If that is the case then Theorem~\ref{theorem:main} holds up to the critical point.
For simplicity we will reuse the constants $\stabilityConstant,\stabilityConstant'$ and $\stabilityConstant''$, adjusting their values if necessary.

\begin{proposition}\label{mixing prop}

There is a finite $\b_0$ such that if $\b>\b_0$ then for $\epsHS>0$, with uniformly high $\QQ_\th$-probability:
\begin{romlist}
\item
For $x\in\WW_\th(b_1,b_2(1-2\epsHS))$,
\begin{align*}
\varphi^{J,(+,-),h}_\L&\left(\s(x)=1 \,\big|\, \pd^-\L\nlra \WW_\th(b_1,b_2(1-\epsHS))\right) \\
\ge \varphi^{J,(+,+),h}_\L&(\s(x)=1) - \exp(-\stabilityConstant b_2 /h).
\end{align*}
\item
For $x\in\WW_\th'(b_1+2\epsHS,b_2)$,
\begin{align*}
\varphi^{J,(-,+),h}_\L&\left(\s(x)=1 \,\big|\, \pd^-\L\nlra \WW_\th(b_1+\epsHS,b_2)\right) \\
\ge \varphi^{J,(+,+),h}_\L&(\s(x)=1) - \exp(-\stabilityConstant'/h).
\end{align*}
\item
If $\mathsf{E}^\th(b_1+2\epsHS)<\mathsf{E}^\th(b_2)$, then for $x\in\WW_\th'(b_1+2\epsHS,b_2)$,
\begin{align*}
\varphi^{J,(+,-),h}_\L&\left(\s(x)=1 \,\big|\, \pd^+\L\nlra \WW_\th(b_1+\epsHS,b_2)\right) \\
\le \varphi^{J,(-,-),h}_\L&(\s(x)=1) + \exp(-\stabilityConstant''/h).
\end{align*}
\item
If $b_1>\Bcritical$, for $x\in\WW_\th'(b_1+\epsHS b_2,b_2)$,
\begin{align*}
&\varphi^{J,(+,-),h}_\L(\s(x)=1) \\
\le 
&\varphi^{J,-,h}_{\WW_\th'(b_2)}\left(\s(x)=1 \,\middle|\, \int_{\WW_\th(b_1)}\MM_K^- \dd\sL^d\ge (\Bcritical)^d\right)
+ \exp(-\stabilityConstant b_2 /h).
\end{align*}
\end{romlist}

\end{proposition}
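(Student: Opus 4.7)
The plan is to treat all four parts by a common three-step scheme.  First, the hypothesis combined with the appropriate Hausdorff stability result from Section~\ref{sec:hausdorff} (Proposition~\ref{decay-bc} for~(i), Proposition~\ref{decay-bc3} for~(ii), Proposition~\ref{decay-bc2} for~(iii), Proposition~\ref{decay-bc-R} for~(iv)) guarantees, on a $\QQ_\th$-uniformly high probability event, that the FK cluster containing the unfavorable portion of $\pd\L$ lies inside a thin mesoscopic shell disjoint from the region around $x$.  Second, I condition on that cluster and use the random-cluster domain Markov property to decouple the configuration around $x$ from the unfavorable boundary.  Third, I combine the low-temperature strong spatial mixing assumed through $\b>\b_0$ with the Ising stochastic orderings of Section~\ref{sec:prop} to match the resulting mixed-boundary measure with the target pure-boundary measure on all of $\L$.

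For part~(i) in detail, let $F=\{\pd^-\L\nlra\WW_\th(b_1,b_2(1-\epsHS))\}$; under $\varphi^{J,(+,-),h}_\L$ let $\sS$ denote the open FK cluster containing $\pd^-\L$.  Proposition~\ref{decay-bc} gives $\varphi^{J,(+,-),h}_\L(F^c)\le\exp(-\stabilityConstant b_2/h)$ with uniformly high $\QQ_\th$-probability.  Conditioning on $\sS=S$ with $S\cap\WW_\th(b_1,b_2(1-\epsHS))=\emptyset$, the random-cluster domain Markov property identifies the restriction of $\varphi^{J,(+,-),h}_\L$ to $\L\sm S$ with $\varphi^{J,(+,\f),h}_{\L\sm S}$ (plus on the remaining portion of $\pd^+\L$, free on the interface with $S$).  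The point $x$ lies at macroscopic $L_\infty$-distance at least $\epsHS b_2$ from the free interface, so low-temperature strong spatial mixing yields
\[
\bigl|\mu^{J,(+,\f),h}_{\L\sm S}(\s(x)=1)-\mu^{J,+,h}_{\L\sm S}(\s(x)=1)\bigr|\le\exp(-c\epsHS b_2/h).
\]
Ising domain monotonicity $\mu^{J,+,h}_{\L\sm S}\gest\mu^{J,+,h}_\L$ then gives $\mu^{J,+,h}_{\L\sm S}(\s(x)=1)\ge\mu^{J,+,h}_\L(\s(x)=1)$, and averaging over $\sS$ followed by dividing by $\varphi^{J,(+,-),h}_\L(F)$ produces the claim after adjusting $\stabilityConstant$.

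Parts~(ii) and~(iii) follow the same recipe with Propositions~\ref{decay-bc3} and~\ref{decay-bc2} in place of Proposition~\ref{decay-bc} and with the cluster $\sS$ chosen correspondingly; in~(iii) the roles of plus and minus are exchanged, the FK cluster peeled off is the one attached to $\pd^+\L$, and the final monotonicity step reverses direction.  Part~(iv) needs an extra step because the comparison target is itself a conditional measure.  Under the minus measure on $\WW_\th'(b_2)$ conditioned on the large-droplet event, Proposition~\ref{decay-bc-R} (applied via the hypothesis $b_1>\Bcritical$, which guarantees that the stability hypothesis from Proposition~\ref{lem:stabilityR} is compatible with the $\sC$-style conditioning) produces a minus shield separating $\pd^-\WW_\th'(b_2)$ from a neighborhood of $\WW_\th(b_1)$.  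Peeling the corresponding FK cluster on both sides of the inequality reduces the statement to spatial mixing on the common interior $\WW_\th'(b_1+\epsHS b_2,b_2(1-\epsHS))$.

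The main obstacle is the spatial mixing step.  For $h=0$ low-temperature strong mixing of the Ising model is classical, but the positive magnetic field tilts the equilibrium toward the plus phase while penalizing the minus phase, so standard Dobrushin--Shlosman couplings do not extend unchanged.  At $\b>\b_0$ the required mixing is driven by the dilute coarse graining of Proposition~\ref{prop:coarse graining} and Corollary~\ref{cor:coarse graining}: a chain of $1$-bad mesoscopic boxes crossing the $\epsHS b_2 N/K$-wide buffer between $x$ and the free interface contributes at most $\exp(-\cgf K^{d-1}\epsHS b_2 N/K)$, which since $K=\lfloor h^{-1/(2d)}\rfloor$ and $N\approx h^{-1}$ absorbs comfortably into the stated $\exp(-\stabilityConstant b_2/h)$ bound.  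This coarse-graining-driven mixing is exactly why the proposition is asserted only for $\b>\b_0$ and merely conjectured down to $\bc$.
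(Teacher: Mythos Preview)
Your reduction step---conditioning on the FK cluster $\sS$ of the unfavorable boundary and invoking the random-cluster domain Markov property to land on a $(+,\f)$-type measure---is essentially the paper's first move (the paper does it in one stroke via monotonicity onto the fixed domain $\WW_\th(b_1,b_2(1-\epsHS))$, but the content is the same). The genuine gap is in your justification of the spatial mixing estimate
\[
\bigl|\mu^{J,(+,\f),h}_{\L\sm S}(\s(x)=1)-\mu^{J,+,h}_{\L\sm S}(\s(x)=1)\bigr|\le\exp(-c\epsHS b_2/h).
\]
You argue that this follows from the FK coarse graining of Proposition~\ref{prop:coarse graining} and Corollary~\ref{cor:coarse graining}, via the cost of a chain of $1$-bad boxes crossing the buffer. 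But $1$-good is a statement about \emph{FK clusters}, not \emph{Ising spin clusters}: a layer consisting entirely of $1$-good boxes with label $+1$ can still be traversed by a minus spin cluster built from the many small FK clusters (each of diameter $<K/2$) that independently receive minus spin. In $d\ge3$ there is no duality to exclude this; the paper says exactly this in the paragraph preceding the proposition, and it is the reason Proposition~\ref{mixing prop} exists as a separate statement rather than being a corollary of Propositions~\ref{decay-bc}--\ref{decay-bc2}. So your argument is circular: the ``low-temperature strong spatial mixing'' you invoke is the content of the proposition itself, and the FK coarse graining you cite does not supply it.

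The paper closes the gap by introducing a strictly stronger notion, the \emph{$\epsCG$-Ising-good} box, which adds to $\epsCG$-good the requirement that spin clusters of the minority sign meeting the box have diameter at most $K/2$. A layer of Ising-good boxes with label $+1$ then genuinely contains a blocking surface of plus \emph{spins}, and the Ising domain Markov property plus monotonicity give the comparison directly. The role of $\b>\b_0$ is precisely to make Ising-good as typical as FK-good: as $\b\to\infty$ the density of edges with $J(e)=1$ but $\om(e)=0$ vanishes, so spin clusters and FK clusters coincide. With this stronger coarse graining in hand, the paper reruns the machinery of Proposition~\ref{decay-bc} (using Proposition~\ref{lem:stability4} for the free outer boundary) with Ising-good in place of good in the phase labels; a non-spanning profile now yields the surface of plus spins rather than merely a surface of $+1$ FK-labels.
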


\noindent The statement of Proposition~\ref{mixing prop} is fine tuned to suit our needs---we have only considered spatial mixing in Wulff-shaped regions. Also, the restriction in part~(iii) is stricter than necessary.

We will prove Proposition~\ref{mixing prop} after first showing how it can be used with the results of Section~\ref{sec:hausdorff}.
By part (i), in the context of Proposition~\ref{decay-bc} with uniformly high $\QQ_\th$-probability, for $x\in\WW_\th(b_1,b_2(1-2\epsHS))$,
\begin{align}\label{example:stability}
|\mu^{J,(+,-),h}_\L (\s(x)=1)-\mu^{J,+,h}_\L (\s(x)=1)| \le 2\exp (-\stabilityConstant b_2 /h).
\end{align}
By part (ii), in the context of Proposition~\ref{decay-bc3} with uniformly high $\QQ_\th$-probability, for $x\in\WW_\th'(b_1+2\epsHS,b_2)$,
\begin{align}\label{example:stability3}
|\mu^{J,(-,+),h}_\L (\s(x)=1)-\mu^{J,+,h}_\L (\s(x)=1)| \le 2\exp (-\stabilityConstant'/h).
\end{align}
By part (iii), in the context of Proposition~\ref{decay-bc2} with uniformly high $\QQ_\th$-probability, for $x\in\WW_\th'(b_1+2\epsHS,b_2)$,
\begin{align}\label{example:stability2}
|\mu^{J,(+,-),h}_\L (\s(x)=1)-\mu^{J,-,h}_\L (\s(x)=1)| \le 2\exp (-\stabilityConstant'' /h).
\end{align}
Suppose that $b_2>b_1>B_\mathrm{root}^\th$ and consider $x\in\WW_\th'(b_1+\epsHS b_2,b_2)$.
By part (iv), and by Proposition~\ref{lem:stability} applied to $\WW_\th(b_1)$ with $\epsStab=1-\Bcritical/b_1$, with uniformly high $\QQ_\th$-probability,
\begin{align}\label{example:stability-}
&|\mu^{J,(+,-),h}_{\WW_\th'(b_1,b_2)} (\s(x)=1)-\mu^{J,-,h}_{\WW_\th'(0,b_2)} (\s(x)=1)| \\
&\le \exp(-\energyStabilityConstant b_1/h^{d-1})+\exp (-\stabilityConstant b_2 /h).\nonumber
\end{align}
Proposition~\ref{mixing prop} is also relevant to the conditioned measure defined in \eqref{mu hat}.
With $b_1=0$, the total variation distance between $\hm_\L$ and $\mu^{J,-,h}_\L(\bcdot\mid \pd^-\L\nlra \WW_\th(b_2(1-\epsHS)))$ is bounded by Proposition~\ref{lem:stability4}, monotonicity, and Proposition \ref{decay-bc-R}.
Thus by part (i) of Proposition~\ref{mixing prop}, for some constant $c>0$, with uniformly high $\QQ_\th$-probability for $x\in\WW_\th'(b_2(1-2\epsHS))$,
\begin{align}\label{example:stability-R}
|\hm_\L (\s(x)=1)-\mu^{J,+,h}_\L (\s(x)=1)| \le \exp (-c/h).
\end{align}
If $b_2>b_1\ge\Bcritical+2\epsC$ then part (iv) can be used to compare $\mu^{J,(+,-),h}_\L$ and $\hm_{\WW_\th'(0,b_2)}$. For $x\in\WW_\th'(b_1+\epsHS b_2,b_2)$, with uniformly high $\QQ_\th$-probability for some positive constant $c>0$,
\begin{align}\label{example:stability-R-}
|\mu^{J,(+,-),h}_\L (\s(x)=1)-\hm_{\WW_\th'(0,b_2)} (\s(x)=1)| \le \exp (-c/h).
\end{align}

\begin{proof}[Proof of Proposition~\ref{mixing prop}]
We will prove part (i); the other parts are similar.
By monotonicity it is sufficient to show that for $x\in\WW_\th(b_1,b_2(1-2\epsHS))$,
\begin{align*}
&\mu^{J,(+,\f),h}_{\WW_\th(b_1,b_2(1-\epsHS))}\left(\s(x)=1\right)\ge \mu^{J,(+,+),h}_{\WW_\th(b_1,b_2(1-\epsHS))}\left(\s(x)=1\right) -\exp(-\stabilityConstant b_2 /h).
\end{align*}
Let $A$ denote the event that the inner- and outer-boundaries of $\WW_\th(b_2(1-2\epsHS),b_2(1-\epsHS))$ are separated by a set of plus spins blocking all paths between the two. Of course, the set of plus spins only needs to block paths composed entirely of edges with $J(e)=1$.
By monotonicity
\begin{align*}
\mu^{J,(+,\f),h}_{\WW_\th(b_1,b_2(1-\epsHS))}(\s(x)=1\mid A)\ge \mu^{J,(+,+),h}_{\WW_\th(b_1,b_2(1-\epsHS))}(\s(x)=1)
\end{align*}
so we need to show that
\begin{align*}
\mu^{J,(+,\f),h}_{\WW_\th(b_1,b_2(1-\epsHS))}(A)\ge 1-\exp(-\stabilityConstant b_2/h).
\end{align*}
We will do this using a stronger coarse-graining property.
\begin{definition}
Consider a box $\BB_K(i)\subset\L$. If
\begin{romlist}
\item $\BB_K(i)$ is $\epsCG$-good and
\item the $\s$-spin clusters composed of vertices with spin $-\s(\BB^\dagger_K(i))$ intersecting $\BB_K(i)$ have diameter at most $K/2$
\end{romlist}
then say that $\BB_K(i)$ is $\epsCG${\em-Ising-good}.
\end{definition}

With $p\in(\pc,1)$ fixed, as $\beta\to\oo$ the annealed random-cluster measure $\QQ[\phi^{J,h}]$ converges weakly to product measure with density $p$; the density of edges with $J(e)=1$ but $\om(e)=0$ goes to zero. Taking $K_0$ large, and then taking $\b$ large, we can make the $\QQ[\varphi^{J,+,0}_\L]$-probability that $\BB_{K_0}(i)\subset\L$ is $\epsCG$-Ising good arbitrarily close to $1$. By a standard renormalization argument we can find $\b_0,K_1$ and $c>0$ such that if $\b>\b_0$ and $K>K_1$ then $\BB_K(i)\subset\L$ is $\epsCG$-Ising-good with $\QQ[\varphi^{J,\f,0}_\L]$-probability $1-\exp(-c K)$.

The result now follows by adapting the proof of Proposition~\ref{decay-bc}. Substitute `$\epsCG$-Ising-good' for `$\epsCG$-good' in the definition of the phase labels and, because of the free outer boundary conditions, use Proposition~\ref{lem:stability4} in place of Proposition~\ref{lem:stability}.
If the profile $\fv$ associated with the phase label configuration $\Psi$ is not spanning then the event $A$ holds.

Parts (ii) and (iii) of Proposition~\ref{mixing prop} follow from the proofs of Propositions~\ref{decay-bc3} and \ref{decay-bc2}, respectively, by substituting `$\epsCG$-Ising-good' for `$\epsCG$-good'. Part (iv) follows from the proof of Proposition~\ref{decay-bc-R}; with high probability there is a surface of plus spins separating the inner- and outer-boundaries of $\WW_\th(b_1,b_1+\epsHS b_2)$ under $\mu^{J,-,h}_\L(\bcdot \mid \int_{\WW_\th(b_1)}\MM_K^- \dd\sL^d\ge (\Bcritical)^d)$.
\end{proof}

\subsection{Spectral gap of the dynamics}\label{sec:mix}

The Glauber dynamics for $\mu^{J,\zeta,h}_\L$ can be studied by introducing a block dynamics.
With $\epsBlock>0$, let $n=\lfloor (b_2-b_1)/\epsBlock\rfloor-1$. Consider a sequence of overlapping annuli that cover $\L$,
\begin{align*}
&\De_j=\WW_\th(b_1+(j-1)\epsBlock,b_1+(j+1)\epsBlock),\qq j=1,2,\dots,n-1,\\
&\De_n=\WW_\th'(b_1+(n-1)\epsBlock,b_2).
\end{align*}
Consider a block dynamics for $\mu^{J,\zeta,h}_\L$ with blocks $\De_1,\dots,\De_n$; update each block $\De_j$ at rate 1, resampling the block conditional on the configuration restricted to $\L\sm\De_j$.
\begin{lemma}\label{block-dynamics}
For $\epsSG>0$, if $\epsBlock$ and $h_0$ are sufficiently small and $0<h<h_0$,
\[
\mathrm{gap}(\L,\zeta,h) \ge \exp(-\epsSG/h^{d-1}) \mathrm{gap}(\L,\{\De_1,\dots,\De_n\},\zeta,h).
\]
\end{lemma}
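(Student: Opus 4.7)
The plan is to apply the standard Cesi / Martinelli--Olivieri comparison between single-site Glauber dynamics and block dynamics, and then estimate each block gap individually. The comparison theorem asserts
\[
\mathrm{gap}(\L,\zeta,h) \ge c_{\mathrm{ov}}\cdot \mathrm{gap}(\L,\{\De_1,\dots,\De_n\},\zeta,h)\cdot \min_{j,\,\zeta'} \mathrm{gap}(\De_j,\zeta',h),
\]
where $c_{\mathrm{ov}}$ depends only on the maximum overlap multiplicity of the cover. By construction each site of $\L$ lies in at most two of the annuli $\De_j$, so $c_{\mathrm{ov}}$ is an absolute constant; after absorbing it into the exponent (harmless for $h$ small), it suffices to establish
\begin{equation}\label{plan:block}
\min_{j,\,\zeta'} \mathrm{gap}(\De_j,\zeta',h) \ge \exp(-\epsSG/h^{d-1}).
\end{equation}

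To prove \eqref{plan:block} I would first reduce to the $h=0$ problem via a Holley--Stroock perturbation. The Radon--Nikodym density $d\mu^{J,\zeta',h}_{\De_j}/d\mu^{J,\zeta',0}_{\De_j}$ is proportional to $\exp(-\b h \#\{x\in\De_j:\s(x)=-1\})$, so its oscillation is at most $\exp(\b h|\De_j|)$. Because $\De_j$ is a Wulff annulus of physical width $2\epsBlock$, $|\De_j|=\O(\epsBlock/h^d)$, and the standard Dirichlet-form comparison (Holley--Stroock / Diaconis--Saloff-Coste) yields
\[
\mathrm{gap}(\De_j,\zeta',h) \ge \exp\bigl(-C\b\epsBlock/h^{d-1}\bigr)\cdot \mathrm{gap}(\De_j,\zeta',0).
\]
Choosing $\epsBlock$ small enough that $C\b\epsBlock<\epsSG/2$ reduces matters to a lower bound on the $h=0$ block gaps that is at worst polynomial in $1/h$.

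For the $h=0$ bound I would invoke the spatial-mixing assumptions that hold at $\b>\b_0$ (Section~\ref{sec:mixing}). Under strong mixing, the Martinelli--Olivieri / Cesi equivalence converts spatial mixing of the equilibrium measure into a lower bound on the Glauber spectral gap which is polynomial in the volume, uniformly in the boundary condition $\zeta'$. Since $|\De_j|=\O(1/h^d)$, this bound is at worst polynomial in $1/h$, and is absorbed by $\exp(\epsSG/(2h^{d-1}))$ once $h<h_0$. Combining the two estimates yields \eqref{plan:block}, and hence the lemma.

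The main obstacle is the $h=0$ step: one must verify that the strong-mixing-implies-gap machinery applies to the non-cubic Wulff annular blocks $\De_j$ with the required uniformity in $\zeta'$ and in $h$. The cleanest route is to cover each $\De_j$ by $\O((\Bmax/h)^{d-1})$ overlapping mesoscopic cubes of side $\O(\epsBlock/h)$, apply the standard strong-mixing/gap equivalence inside each cube, and then glue back by a second application of the block-dynamics comparison, at each stage using the coarse-graining Proposition~\ref{prop:coarse graining} and the Hausdorff-stability bounds of Section~\ref{sec:hausdorff} to control the effect of the induced boundary conditions. The fact that the thickness $2\epsBlock/h$ of $\De_j$ is bounded is what makes the recursion terminate in a gap that is merely polynomial in $1/h$.
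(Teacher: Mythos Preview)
Your overall framework is sound, and the Holley--Stroock step is correct: it removes the field at a cost $\exp(C\b h|\De_j|)=\exp(\O(\epsBlock)/h^{d-1})$. The genuine gap is the $h=0$ step. At $\b>\b_0>\bc$ the zero-field Ising model does \emph{not} satisfy strong spatial mixing uniformly in the boundary condition: with Dobrushin-type boundary data an interface is forced through the block, and the single-site gap is exponentially small in the interface area, not polynomial in the volume. The mixing statements in Section~\ref{sec:mixing} concern only the model \emph{with} field $h$ and only the particular Wulff-annulus boundary conditions $(+,-)$, $(-,+)$, $(+,+)$; they do not supply the uniform-in-$\zeta'$ input that the Martinelli--Olivieri equivalence requires. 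Your proposed resolution (tile $\De_j$ by mesoscopic cubes and invoke strong mixing in each) runs into the same obstruction inside every cube.

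The paper's argument avoids this entirely. It does not attempt to bound $\min_{j,\zeta'}\mathrm{gap}(\De_j,\zeta',0)$ via any mixing property; instead it quotes the canonical-paths-type comparison from \cite[Theorem~2.1]{martinelli} (as used in \cite{schonmann-shlosman}): for any ordering $y_1^j,\dots,y_{|\De_j|}^j$ of each block,
\[
\mathrm{gap}(\L,\zeta,h)\ \ge\ \frac{c\,\exp(-CL)}{|\L|}\ \mathrm{gap}(\L,\{\De_1,\dots,\De_n\},\zeta,h),
\]
where $L$ is the maximum over $i,j$ of the number of edges between $\{y_1^j,\dots,y_i^j\}$ and $\{y_{i+1}^j,\dots,y_{|\De_j|}^j\}$. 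This is a crude energy-barrier bound and needs no mixing hypothesis. The one idea is geometric: order the vertices of the thin Wulff annulus $\De_j$ by their angle with $\be_1$. Every partial cut in this ordering is a radial slice through the shell, of size $\O(\epsBlock N)\cdot \O(N^{d-2})=\O(\epsBlock N^{d-1})$, so $L h^{d-1}=\O(\epsBlock)$. Choosing $\epsBlock$ small makes $CL<\epsSG/h^{d-1}$, and the polynomial prefactor $c/|\L|$ is absorbed once $h<h_0$. If you wish to keep your framework, the same angular-sweep canonical-paths bound applied directly to each $\De_j$ yields $\mathrm{gap}(\De_j,\zeta',h)\ge \exp(-CL)$ uniformly in $\zeta'$ and in $h$; that, not spatial mixing, is the missing ingredient in \eqref{plan:block}.
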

\begin{proof}
$\sW_\th$ is a subset of $\sW_{2\pi}$, so we can assume $\th=2\pi$ without loss of generality.

Let $y^j_1,\dots,j^j_{|\De_j|}$ denote an ordering of the vertices in $\De_j$ such that the angle between $y^j_i$ and $\be_1$ is increasing with $i$.
For each vertex $y^j_i$ in block $\De_j$ consider the edge-boundary between $\{y^j_1,\dots,y^j_i\}$ and $\{y^j_{i+1},\dots,y^j_{|\De_j|}\}$; let $L$ denote the maximum (over $i=1,\dots,|\De_j|$ and $j=1,\dots,n$) size of the boundary. Given $\Bmax$, $Lh^{d-1}=\O(\epsBlock)$ as $\epsBlock\to 0$.

As noted in \cite{schonmann-shlosman}, the proof of \cite[Theorem 2.1]{martinelli} implies that for some $C,c>0$,
\begin{align*}
\mathrm{gap}(\L,\zeta,h)\ge \frac{c\exp(-C L)}{|\L|} \mathrm{gap}(\L,\{\De_1,\dots,\De_n\},\zeta,h).
\end{align*}
Choose $\epsBlock$ so that $CL< \epsSG/h^{d-1}$.
\end{proof}

We are now in a position to extend \cite[Propositions~3.5.1--3.5.3]{schonmann-shlosman} from the Ising model on $\ZZ^2$ to the dilute Ising model on $\ZZ^d$ with $d\ge 2$.

\begin{proposition}
\label{sg1}
Let $b_1=0$ and $\epsSG>0$. With uniformly high $\QQ_\th$-probability,
\begin{align*}
\mathrm{gap}(\L,+,h)\ge \exp(-\epsSG/h^{d-1}).
\end{align*}
\end{proposition}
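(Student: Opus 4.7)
The plan is to mirror the proof of \cite[Proposition~3.5.1]{schonmann-shlosman}, adapted to the cone geometry and to the quenched disorder. The argument decomposes into two clean steps: first, use the block-dynamics reduction of Lemma~\ref{block-dynamics} to absorb the exponential factor; second, show that the block-dynamics gap itself is bounded below by a quantity that is $\Omega(\mathrm{poly}(h))$, which is much larger than the factor lost in the first step.

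The first step is to apply Lemma~\ref{block-dynamics} with $\epsSG/2$ in place of $\epsSG$. This reduces the problem to showing that, with uniformly high $\QQ_\th$-probability,
\[
\mathrm{gap}(\L,\{\De_1,\dots,\De_n\},+,h) \;\ge\; \exp\!\bigl(-\epsSG/(2h^{d-1})\bigr).
\]
I will obtain the much stronger bound $\mathrm{gap}(\L,\{\De_j\},+,h)\ge c\epsBlock$, which suffices since $\epsBlock$ is fixed once $\epsSG$ is fixed and then $h_0$ is chosen small.

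The second step bounds the block-dynamics gap via a Cesi-type decomposition of the variance of a test function $f$ against $\mu^{J,+,h}_\L$ into a sum of conditional variances $\mu^{J,+,h}_\L[\mathrm{Var}_{\De_j}(f\mid\s_{\L\sm\De_j})]$ plus cross-terms measuring the correlation between disjoint blocks. Because $b_1=0$, the plus phase is the dominant phase throughout $\L$ (for instance by Proposition~\ref{lem:stability4}), and the spatial-mixing inequality \eqref{example:stability}---a consequence of Proposition~\ref{mixing prop}(i)---shows that the conditional distribution in each $\De_j$ given the configuration outside is within total variation distance $\exp(-c/h)$ of the measure with plus boundary conditions, uniformly in the outer configuration. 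This makes the cross-terms in the variance decomposition negligible compared with the diagonal contribution, and standard Cesi/Martinelli machinery then gives $\mathrm{gap}(\L,\{\De_j\},+,h)\ge c/n$ with $n=\O(1/\epsBlock)$. Combining with Step~1 yields the proposition once $h$ is small enough that $c\epsBlock \ge \exp(-\epsSG/(2h^{d-1}))$.

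The main obstacle lies in the second step: the standard Cesi decomposition is written for cubic or rectangular overlapping blocks, whereas our $\De_j$ are curved annular shells of macroscopic width $2\epsBlock$ that meet the free boundary $\pd^\star\sA_\th$. Two points need verification. (a) The overlap $\De_j\cap\De_{j+1}$ is again an annular shell of macroscopic width $\epsBlock$; its geometry is still regular enough that the spatial mixing bound applies uniformly along it, because the free boundary carries zero surface tension by the construction of $\QQ_\th$ so that the analysis along $\pd^\star\sA_\th$ reduces to the bulk analysis near $\pd^\star\sW_\th$. (b) All uses of Proposition~\ref{mixing prop} require a good $\QQ_\th$-event; since there are only $n=\O(1/\epsBlock)$ blocks and the bound is uniformly high $\QQ_\th$-probability, the intersection over $j$ remains of uniformly high $\QQ_\th$-probability after absorbing a multiplicative constant. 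With these two geometric checks in place, the rest of the variance decomposition follows the classical template and produces the desired $\Omega(\epsBlock)$ lower bound on the block-dynamics gap.
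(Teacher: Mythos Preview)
Your reduction via Lemma~\ref{block-dynamics} is fine, but the second step has a real gap. You assert that \eqref{example:stability} (Proposition~\ref{mixing prop}(i)) shows the conditional law in $\De_j$ given $\s_{\L\sm\De_j}$ is within $\exp(-c/h)$ of the plus-boundary measure \emph{uniformly in the outer configuration}. That is not what the spatial-mixing results say: \eqref{example:stability} compares the two specific boundary conditions $(+,-)$ and $(+,+)$ on a Wulff annulus under the hypothesis $\mathsf{E}^\th(b_1)>\mathsf{E}^\th(b_2(1-\epsHS))$; it does not control $\mu^{J,\zeta,h}_{\De_j}$ for arbitrary $\zeta$. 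In particular, if both the inner and outer boundaries of $\De_j$ carry minus spins, the conditional measure is the $(-,-)$ measure on a thin shell, which is minus-dominated and nowhere near the plus measure. Since the Cesi variance decomposition requires exactly this kind of uniform (or at least uniform-over-typical-boundary) comparison, the ``standard machinery'' does not run here without substantial additional work.

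The paper avoids this difficulty by replacing the variance argument with a monotone coupling: one couples the block chain from an arbitrary start with the stationary chain and asks for a single sweep $\De_n,\De_{n-1},\dots,\De_1$ of ``good'' updates. The point is that after the updates on $\De_n,\dots,\De_{j+1}$ have succeeded, the configuration already agrees with equilibrium on $\L\sm\cup_{i\le j}\De_i$, so when $\De_j$ is resampled the two chains are sandwiched between the $(-,+)$ and $(+,+)$ measures on $\WW_\th'((j-1)\epsBlock,b_2)$ (see \eqref{s1s2}); it is \eqref{example:stability3}, not \eqref{example:stability}, that closes this sandwich. Thus the spatial-mixing input is only ever invoked with a \emph{plus} outer boundary, never with an arbitrary one. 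The probability of such a sweep in time $1$ is at least $\exp(-n)/n!\cdot(1-2n|\L|\exp(-\stabilityConstant'/h))$, which is bounded away from zero uniformly in $b_2\le\Bmax$, and this lower-bounds the block-dynamics gap directly.
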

\begin{proof}
Let $(\s_t)_{t\ge 0}$ denote a copy of the block dynamics Markov chain.
The graphical construction can be extended to the block dynamics by coupling from the past: if block $\De_j$ is to be updated at time $t$, use a copy of the regular graphical construction in $\De_j$ over the time interval $(-\oo,0]$ with boundary conditions $\s_{t-}$ to produce the new configuration $\s_t$. By monotonicity, $\s_t$ is an increasing function of the initial configuration $\s_0$.

When $t$ is sufficiently large, $\s_t$ is independent of $\s_0$; $\s_t$ then corresponds to a sample from the equilibrium distribution $\mu^{J,+,h}_\L$. Let $(\s^\mathrm{eqm}_t)_{t\ge0}$ denote a copy of the block dynamics Markov chain started in equilibrium.

We will show that with uniformly high $\QQ_\th$-probability, the probability that $\s_1=\s^\mathrm{eqm}_1$ is bounded away from zero. This implies that the spectral gap of the block dynamics is bounded away from zero and so the result follows by Lemma~\ref{block-dynamics}.

Say that an update of block $\De_j$ at time $t$ is {\em good} if the update maps all configurations that agree with $\s^\mathrm{eqm}_{t-}$ on $\L\sm\cup_{i=1}^j\De_i$ to configurations that agree with $\s^\mathrm{eqm}_t$ on the larger set $\L\sm\cup_{i=1}^{j-1}\De_i=\WW_\th'(b_1+j\epsBlock,b_2)$.
By monotonicity, if $\s_{t-}$ agrees with $\s^\mathrm{eqm}_{t-}\sim\mu^{J,+,h}_\L$ on $\L\sm\cup_{i=1}^j\De_i$ then
\begin{align}\label{s1s2}
\mu^{J,(-,+),h}_{\WW_\th'(b_1+(j-1)\epsBlock,b_2)} \lest \s_t \lest \mu^{J,(+,+),h}_{\WW_\th'(b_1+(j-1)\epsBlock,b_2)}.
\end{align}
Inequality \eqref{example:stability3} used with the sandwich \eqref{s1s2} gives a lower bound on the probability that the update is good; $\s_t$ and $\s^\mathrm{eqm}_t$ agree on $\WW_\th'(b_1+j\epsBlock,b_2)$ with probability at least $1-2|\L|\exp(-\stabilityConstant'/h)$.

With probability $\exp(-n)/n!$ there is an uninterrupted sequence of updates on $\De_n,\De_{n-1},\dots,\De_1$ in the time interval $[0,1]$. If all the updates are good, which occurs with probability at least $1-2n|\L|\exp(-\stabilityConstant' N)$, then $\s_1=\s^\mathrm{eqm}_1$. Note that $n\le \Bmax/\epsBlock$ so the probability of seeing such a sequence of updates is bounded away from zero uniformly over $b_2\in[0,\Bmax]$.
\end{proof}

\begin{proposition}
\label{sg3}
Let $b_1=0$ and $\epsSG>0$. One can choose $b_2$ slightly larger than $\Bcritical$ such that with high $\QQ_\th$-probability,
\begin{align*}
\mathrm{gap}(\L,-,h)\ge \exp(-\epsSG/h^{d-1}).
\end{align*}
\end{proposition}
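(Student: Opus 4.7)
The plan is to adapt the block-dynamics argument of Proposition~\ref{sg1} to the minus-boundary case, exploiting that for $b_2$ slightly larger than $\Bcritical$, the energy gap $\mathsf{E}^\th_\c - \mathsf{E}^\th(b_2)$ scales as $(b_2 - \Bcritical)^2$ and can be made arbitrarily small.

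First I would choose $b_2 = \Bcritical + \delta$ with $\delta > 0$ small enough that $b_2 < B_\mathrm{root}^\th$, ensuring $\mathsf{E}^\th(b_2) > 0$ so that the minus phase is the ground state on $\L = \WW_\th'(0, b_2)$. Apply Lemma~\ref{block-dynamics} with the concentric annulus blocks $\De_1, \ldots, \De_n$; it suffices to lower-bound the block-dynamics spectral gap by a constant independent of $h$. Using the monotone graphical construction, couple $(\s_t)$ started from an arbitrary $\s_0$ with $(\s_t^{\mathrm{eqm}})$ started from $\mu^{J,-,h}_\L$. Mirroring Proposition~\ref{sg1}, declare an update of $\De_j$ \emph{good} if afterwards $\s_t$ and $\s_t^{\mathrm{eqm}}$ agree on $\L \setminus \bigcup_{i<j}\De_i$; the probability of the sequential update pattern $\De_n, \De_{n-1}, \ldots, \De_1$ in $[0,1]$ is bounded below by $e^{-n}/n!$, uniform in $h$ since $n = O(1/\epsBlock)$.

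For each update, monotonicity sandwiches the post-update configuration on $\De_j$ between samples of $\mu^{J,(+,\zeta),h}_{\De_j}$ and $\mu^{J,(-,\zeta),h}_{\De_j}$, where $\zeta$ is the shared outer boundary inherited from $\s^{\mathrm{eqm}}_{t-}$ (which is $-$ on $\pd\L$ and in equilibrium elsewhere). Inequality \eqref{example:stability2}, which follows from Propositions~\ref{decay-bc2} and \ref{mixing prop}(iii), shows these measures agree on the outer portion $\De_j \setminus \De_{j-1}$ up to an additive error $\exp(-c/h)$ provided the energy condition $\mathsf{E}^\th(b_1^{\De_j} + 2\epsHS) < \mathsf{E}^\th(b_2^{\De_j})$ holds, where $b_1^{\De_j}$ and $b_2^{\De_j}$ are the inner and outer radii of $\De_j$. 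For blocks entirely in the subcritical regime ($b_2^{\De_j} \le \Bcritical$) this follows from the monotonicity of $\mathsf{E}^\th$ on $(0, \Bcritical)$ given $\epsHS < \epsBlock/2$, and for the outermost block $\De_n$ it holds by taking $\delta \ll \epsBlock$ so that $b_1^{\De_n} + 2\epsHS$ lies well below $\Bcritical$ while $\mathsf{E}^\th(b_2)$ is within $A\delta^2$ of the maximum $\mathsf{E}^\th_\c$.

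The main obstacle is the block containing $\Bcritical$ in its interior in a roughly symmetric manner: the parabolic behavior of $\mathsf{E}^\th$ at its maximum can cause the energy condition to fail regardless of how $\epsBlock$ is chosen. I would address this by a careful choice of $\epsBlock$: writing $\Bcritical/\epsBlock = k + r$ with $r \in [0, 1)$, select $\epsBlock$ so that $r > 1/2$, and merge the two overlapping blocks containing $\Bcritical$ (i.e., $\De_k$ and $\De_{k+1}$) into a single larger block of inner radius $(k-1)\epsBlock$ and outer radius $(k+2)\epsBlock$. For this merged block the energy condition becomes $\mathsf{E}^\th((k-1)\epsBlock + 2\epsHS) < \mathsf{E}^\th((k+2)\epsBlock)$, which via the parabolic approximation reduces to $(2r-1)\epsBlock > 2\epsHS$ and holds if $\epsHS < (r - 1/2)\epsBlock$. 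The merged block structure still fits the Martinelli framework underlying Lemma~\ref{block-dynamics}, with only a bounded increase in the boundary parameter $L$. Combining all good-update estimates, the coupling-from-the-past argument concludes as in Proposition~\ref{sg1}, yielding the required bound $\mathrm{gap}(\L, -, h) \ge \exp(-\epsSG/h^{d-1})$.
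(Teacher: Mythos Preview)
Your overall strategy---block dynamics, coupling from the past via sequential good updates, and invoking \eqref{example:stability2} in place of \eqref{example:stability3}---is exactly what the paper does. However, you misidentify the region on which the sandwich \eqref{s1s2} lives, and this sends you into an unnecessary complication.

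In the paper's argument (following Proposition~\ref{sg1}), once $\s_{t-}$ and $\s^{\mathrm{eqm}}_{t-}$ agree on $\L\setminus\bigcup_{i=1}^j\De_i$, the post-update configuration $\s_t$ restricted to the \emph{entire} region $\WW_\th'((j-1)\epsBlock, b_2)$---not just to the block $\De_j$---is sandwiched between $\mu^{J,(-,-),h}_{\WW_\th'((j-1)\epsBlock,b_2)}$ and $\mu^{J,(+,-),h}_{\WW_\th'((j-1)\epsBlock,b_2)}$. The outer boundary here is the actual boundary of $\L$, which is $-$; the shared equilibrium configuration on the outer part of $\L$ is absorbed into the Gibbs measure, not treated as a boundary condition on $\De_j$. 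So \eqref{example:stability2} is applied with inner radius $(j-1)\epsBlock$ and outer radius $b_2$ for every $j$, and the only energy inequality needed is $\mathsf{E}^\th((j-1)\epsBlock + 2\epsHS) < \mathsf{E}^\th(b_2)$, the hardest case being $j=n$.

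This makes the choice of $b_2$ clean: the paper first fixes $\epsBlock$ via Lemma~\ref{block-dynamics}, sets $n=\lfloor \Bcritical/\epsBlock\rfloor-1$ so that $(n-1)\epsBlock$ lies strictly below $\Bcritical$, and then picks $b_2\in(\Bcritical,(n+2)\epsBlock)$ close enough to $\Bcritical$ that $\mathsf{E}^\th((n-1)\epsBlock)<\mathsf{E}^\th(b_2)$. There is no intermediate block for which the comparison involves two values of $\mathsf{E}^\th$ straddling $\Bcritical$ symmetrically, so your block-merging and fractional-part gymnastics are never needed. Conversely, your version---placing the sandwich on $\De_j$ alone with an equilibrium outer boundary $\zeta$---cannot invoke \eqref{example:stability2} as stated, since that inequality is specifically for a $-$ outer boundary on the full annulus $\WW_\th'(b_1',b_2')$; this is where your argument has a genuine gap.
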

\begin{proof}
Take $\epsBlock$ according to Lemma~\ref{block-dynamics}. Taking $n=\lfloor \Bcritical/\epsBlock\rfloor-1$, choose $b_2\in(\Bcritical,(n+2)\epsBlock)$ such that $\mathsf{E}^\th((n-1)\epsBlock)<\mathsf{E}^\th(b_2)$.
We can then follow the proof of Proposition~\ref{sg1}. The boundary conditions in \eqref{s1s2} should be changed to $(-,-)$ on the left-side and $(+,-)$ on the right-side. The probability of a block update being good is then bounded below using inequality \eqref{example:stability2} in place of \eqref{example:stability3}
\end{proof}
\begin{proposition}
\label{sg2}
Consider the case $b_1>\Bcritical$. Let $\epsSG>0$. With uniformly high $\QQ_\th$-probability,
\begin{align*}
\mathrm{gap} (\L,(+,-),h) \ge \exp( - \epsSG/h^{d-1} ).
\end{align*}
\end{proposition}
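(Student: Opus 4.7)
The plan is to follow the proofs of Propositions~\ref{sg1} and~\ref{sg3}. First, Lemma~\ref{block-dynamics} reduces the claim to a uniform lower bound on the block-dynamics spectral gap with blocks $\De_1,\dots,\De_n$. We establish this by coupling from the past: let $(\s_t)$ be a block-dynamics chain started from an arbitrary configuration and $(\s^\mathrm{eqm}_t)$ one started from $\mu^{J,(+,-),h}_\L$, driven by the same graphical construction. It suffices to show $\P(\s_1=\s^\mathrm{eqm}_1) \ge c$ for some positive $c$ uniform in $b_2\in(b_1,\Bmax]$.

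The essential difference from Propositions~\ref{sg1} and~\ref{sg3} is the direction of propagation. Since $b_1 > \Bcritical$, the function $\mathsf{E}^\th$ is strictly decreasing on $[b_1,b_2]$, so by Proposition~\ref{lem:stability} the plus phase is dominant on $\L$ and on each block $\De_j$ (for $\epsBlock$ sufficiently small). The dominant boundary is the inner plus boundary $\pd\sW_\th(b_1)$, so we propagate agreement outward by updating the blocks in the order $\De_1,\De_2,\dots,\De_n$. Call an update of $\De_j$ \emph{good} if, assuming $\s_{t-} = \s^\mathrm{eqm}_{t-}$ on $\WW_\th(b_1,b_1+(j-1)\epsBlock)$ before the update, the agreement extends to $\WW_\th(b_1,b_1+j\epsBlock)$ afterwards.

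For a good update of $\De_j$, the monotone sandwich from the proof of Proposition~\ref{sg1}, applied to the sub-region $\WW_\th(b_1,b_1+(j+1)\epsBlock)$, gives
\begin{align*}
\mu^{J,(+,-),h}_{\WW_\th(b_1,b_1+(j+1)\epsBlock)} \lest \s_t\big|_{\WW_\th(b_1,b_1+(j+1)\epsBlock)} \lest \mu^{J,(+,+),h}_{\WW_\th(b_1,b_1+(j+1)\epsBlock)}.
\end{align*}
Choosing $\epsHS \le \epsBlock/(2\Bmax)$ ensures that the mixing region $\WW_\th(b_1,(b_1+(j+1)\epsBlock)(1-2\epsHS))$ contains $\WW_\th(b_1,b_1+j\epsBlock)$, so inequality~\eqref{example:stability} yields agreement of the two sandwich measures on $\WW_\th(b_1,b_1+j\epsBlock)$ up to an error $\exp(-\stabilityConstant b_2/h)$. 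Hence each $\De_j$-update is good with probability at least $1 - 2|\L|\exp(-\stabilityConstant b_2/h)$. An uninterrupted sequence $\De_1,\dots,\De_n$ of updates in $[0,1]$ occurs with probability at least $e^{-n}/n!$, which is bounded away from zero since $n\le(b_2-b_1)/\epsBlock\le\Bmax/\epsBlock$. Combining the two estimates gives $\P(\s_1=\s^\mathrm{eqm}_1) \ge c > 0$ uniformly for $h$ small and $b_2\in(b_1,\Bmax]$, and Lemma~\ref{block-dynamics} concludes.

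The main obstacle is verifying that the stability constant $\stabilityConstant$ from Proposition~\ref{decay-bc} can be chosen uniformly across the blocks $\De_j$. This is routine: the strict decrease of $\mathsf{E}^\th$ on $[b_1,b_2]$ holds with uniform slope since $b_1 > \Bcritical$, and $\stabilityConstant$ is independent of $\Bmax$.
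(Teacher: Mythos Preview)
Your proof is correct and follows essentially the same approach as the paper's own argument: propagate agreement outward by updating $\De_1,\dots,\De_n$ in order, use the sandwich with $(+,-)$ and $(+,+)$ boundary conditions, and invoke inequality~\eqref{example:stability} to show each update is good with high probability. The only minor point is that in your error term $\exp(-\stabilityConstant b_2/h)$ the ``$b_2$'' should be the outer radius $b_1+(j+1)\epsBlock$ of the sandwich region rather than the outer radius of $\L$, but since this is bounded below by $b_1>\Bcritical$ the conclusion is unaffected.
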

\begin{proof}
Say that an update of block $\De_j$ at time $t$ is {\em good} if the update maps all configurations that agree with $\s^\mathrm{eqm}_{t-}$ on $\WW_\th(b_1+(j-1)\epsBlock)=\L\sm\cup_{i=j}^n\De_i$ to configurations that agree with $\s^\mathrm{eqm}_t$ on $\L\sm\cup_{i=j+1}^n\De_i$.
The boundary conditions in \eqref{s1s2} should be changed to $(+,-)$ on the left-side and $(+,+)$ on the right-side. Inequality \eqref{example:stability} shows that updates are good with high probability.
If there is an uninterrupted sequence of good updates in the order $\De_1,\dots,\De_n$ in the time interval $[0,1]$ then $\s_1=\s^\mathrm{eqm}_1$.
\end{proof}

\section{Space-time cones and rescaling}\label{sec:big-proof}

In this section we will turn the heuristic description of plus-cluster nucleation from Section~\ref{sec:heuristic} into a proof of Theorem~\ref{theorem:main}. We will apply the results in Section~\ref{sec:technical} with two values of $\th$.

We will take $\th\in(0,\pi)$ to denote the argument of $\l_2^\th$ in the statement of the theorem.
We will consider regions with the shape $\WW_\th(b)$ for $b\in[\Bmin,\Bmax]$. The lower bound $\Bmin$ will be chosen to maximize the rate of nucleation of plus clusters. We will take $\Bmax$ to be the minimum value such that a translation of $\WW_{2\pi}(1.01 \Bcriticall)$ fits inside $\WW_\th(\Bmax)$; see parts 1 and 2 of Figure~\ref{fig:grow}.
By Proposition~\ref{catalyst size}, $\sW_{2\pi}(\Bcriticall)$ has diameter of order $1$ as $\b\to\oo$ and $\th\to 0$; $\sW_\th(\Bmax)$ must have volume of order $\th^{-1}$. The Wulff shape $\sW_\th(b)$ has volume $b^d$ so $\Bmax$ must be of order $\th^{-1/d}$. By \eqref{Q-th-def-} the probability of the event conditioned on in the definition of $\QQ_\th$ is
\begin{align}\label{prob cat}
\exp(-\cdil \th^{-1}/h^{d-1}) \qtext{with} \cdil=\O\left(\log\frac{1}{1-p}\right).
\end{align}
This gives the density of nucleation sites in $\ZZ^d$.
We will show that at these nucleation sites, droplets of plus phase form at the rate $\exp(-\mathsf{E}^\th_\c/h^{d-1})$.

We must then show that the clusters of plus-phase can spread out from the sheltered nucleation sites. We do this by considering the full Wulff shape $\sW_{2\pi}$.
In areas of typical dilution, sufficiently large Wulff-shaped droplets of plus phase expand with high probability.
With reference to \eqref{example:stability} we will take $\Bmaxx$ large so that $\mu^{J,-,h}_{\WW_{2\pi}(\Bmaxx)}$ provides a good approximation to the equilibrium measure $\mu^{J,h}$ in a neighborhood of the origin.

\begin{figure}
\begin{center}
\begin{picture}(0,0)%
\includegraphics{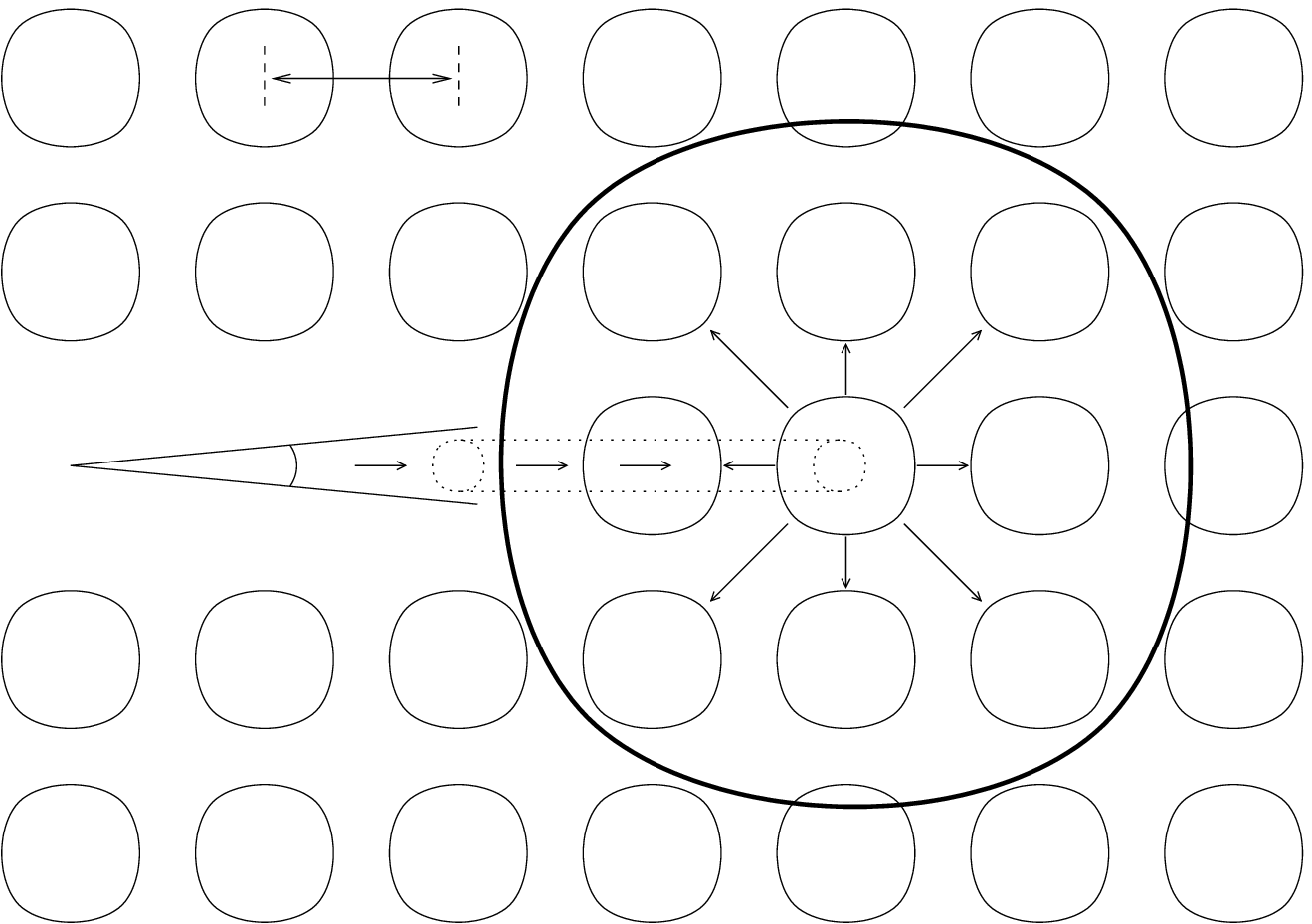}%
\end{picture}%
\setlength{\unitlength}{2735sp}%
\begingroup\makeatletter\ifx\SetFigFont\undefined%
\gdef\SetFigFont#1#2#3#4#5{%
  \reset@font\fontsize{#1}{#2pt}%
  \fontfamily{#3}\fontseries{#4}\fontshape{#5}%
  \selectfont}%
\fi\endgroup%
\begin{picture}(9084,6384)(-3191,-2353)
\put(-44,749){\makebox(0,0)[lb]{\smash{{\SetFigFont{11}{13.2}{\familydefault}{\mddefault}{\updefault}$2$}}}}
\put(2611,749){\makebox(0,0)[lb]{\smash{{\SetFigFont{11}{13.2}{\familydefault}{\mddefault}{\updefault}$3$}}}}
\put(2836,974){\makebox(0,0)[lb]{\smash{{\SetFigFont{11}{13.2}{\familydefault}{\mddefault}{\updefault}$4$}}}}
\put(3376,2774){\makebox(0,0)[lb]{\smash{{\SetFigFont{11}{13.2}{\familydefault}{\mddefault}{\updefault}$5$}}}}
\put(-809,3584){\makebox(0,0)[lb]{\smash{{\SetFigFont{11}{13.2}{\familydefault}{\mddefault}{\updefault}$lN$}}}}
\put(-1304,749){\makebox(0,0)[lb]{\smash{{\SetFigFont{11}{13.2}{\familydefault}{\mddefault}{\updefault}$1$}}}}
\end{picture}%
\end{center}
\caption{\label{fig:grow}Illustration of a $\QQ$-catalyst. The nucleation event occurs at rate $\exp(-\mathsf{E}^\th_\c/h^{d-1})$ and consists of the following steps. (1) A droplet of plus phase with the shape $\WW_\th(\Bmin)$ forms in a region of high dilution that resembles $\WW_\th(\Bmax)$ [Proposition~\ref{droplet creation}]. (2) The droplet expands in the sheltered region  to cover a copy of $\WW_{2\pi}(1.01 \Bcriticall)$ [Proposition~\ref{prop322}]. (3) The droplet of plus phase spreads to the right [Proposition~\ref{prop322'}] and (4) expands to cover $\WW_{2\pi}(\Bminn)$ [Proposition~\ref{prop322} with $\th$ taken to be $2\pi$].
There is now a droplet of plus phase at the center of a $\QQ$-conductive site of the rescaled lattice. The droplet expands (5) to cover $\WW_{2\pi}(\Bmaxx)$ [Proposition~\ref{prop321}] which contains neighboring $\QQ$-conductive sites.
}
\end{figure}

\subsection{The graphical construction in space-time regions}

Before we give the proof of Theorem~\ref{theorem:main}, we need to extend the Ising dynamics to allow the size of the graph to change with time.
With $\Gamma_0,\Gamma_1,\dots,\Gamma_n \subset \ZZ^d$ and $t_0<t_1<\dots<t_{n+1}$, consider the space-time region
\begin{align}\label{ST}
\Gamma=\mathrm{ST}(\Gamma_0,\dots,\Gamma_n;t_0<\dots<t_{n+1}):=\bigcup_{i=0}^n \Gamma_i \times [ t_i , t_{i+1} ].
\end{align}
The graphical construction for the Ising model $\mu^{J,\zeta,h}_\L$ described in Section~\ref{sec:heat bath} can be extended to $\Gamma$.
\begin{romlist}
\item Let $s$ denote the start time.
\item Let $\xi$ denote an initial configuration compatible with boundary conditions $\zeta$ at time $s$, i.e. if $s\in[t_i,t_{i+1})$ then $\xi\in\Si_{\Gamma_i}^\zeta$.
\item Let $\s^{s,\xi}_{\Gamma,\zeta,h;s}=\xi$.
\item If a vertex $x$ is added to the dynamics at time $t_i$ (i.e. $x\in\Gamma_i\sm\Gamma_{i-1}$) then the spin $\s^{s,\xi}_{\Gamma,\zeta,h;t_i}(x)$ is taken to be $\xi(x)$ to match the boundary conditions. The spin at $x$ may then change with each arrival of the corresponding Poisson process.
\item If $x$ is removed from the dynamics at time $t_i$ (i.e. $x\in \Gamma_{i-1}\sm\Gamma_i$) then the spin at $x$ is immediately switched to $\xi(x)$ to conform to the boundary conditions.
\end{romlist}
The graphical construction of $\PP_J$ allows us to link together the Ising dynamics run in overlapping space-time regions.
This can be used to chain together the different steps involved in the growth of a region of plus-phase.
\begin{remark}\label{concat}
Consider two space-time regions such that the top layer of the first region covers the start of the second region:
\begin{align*}
\Gamma= \mathrm{ST}&(\Gamma_0,\Gamma_1,\dots,\Gamma_m;t_0<t_1<\dots<t_{m+1}),\\
\De=  \mathrm{ST}&(\De_0,\De_1,\dots,\De_n;u_0<u_1<\dots<u_{n+1}),\\
\Gamma_m=\De_0 &\text{ and } u_0= t_m<t_{m+1}\le u_1.
\end{align*}
If $\s^{t_0,\xi}_{\Gamma,-,h;t_{m+1}}= \s^{t_m,+}_{\Gamma,-,h;t_{m+1}}$ and $\s^{u_0,+}_{\De,-,h;u_{n+1}}=\s^{u_n,+}_{\De,-,h;u_{n+1}}$ then
\begin{align*}
\s^{t_0,\xi}_{\Gamma\cup\De,-,h;u_{n+1}}= \s^{u_n,+}_{\De,-,h;u_{n+1}}.
\end{align*}
\end{remark}

\subsection{Droplet creation in a Summertop cone}
Let $\th\in(0,\pi)$ and $\d>0$.
By Proposition~\ref{sg3} we can choose $\Bmin\in(\Bcritical,B_\mathrm{root}^\th)$ such that with high $\QQ_\th$-probability,
\begin{align}\label{gap asd}
\mathrm{gap}(\WW_\th(\Bmin),-,h)\ge \exp(-\d/(2h^{d-1})).
\end{align}
Let $\L=\WW_\th(\Bmin)$. Heuristically, we expect critical droplets to form in $\L$ at rate $\exp(-\mathsf{E}^\th_\c/h^{d-1})$.
Let $\hm_\L=\mu^{J,-,h}_\L ( \bcdot \mid \sC)$ denote the conditional measure defined by \eqref{mu hat} with $\epsC=(\Bmin-\Bcritical)/3$.

\begin{proposition}
\label{droplet creation}
With high $\QQ_\th$-probability we can construct a random variable $\hat\s\sim\hm_\L$ such that the event $\{\hat\s=\s^{0,-}_{\L,-,h;\exp(\d/h^{d-1})}\}$ has probability $\exp(-\mathsf{E}^\th_\c/h^{d-1})$ and is independent of the value of $\hat\s$.
\end{proposition}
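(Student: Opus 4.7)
The plan is a three-step argument standard in metastability: (a) estimate $\pi(\sC)$ where $\pi := \mu^{J,-,h}_\L$; (b) use the spectral gap \eqref{gap asd} to show that the law $\mu_t$ of $\s^{0,-}_{\L,-,h;t}$ at $t := \exp(\d/h^{d-1})$ is nearly $\pi$ in a multiplicative sense; (c) realize the coupling by an explicit mixture decomposition of $\mu_t$.

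\emph{Equilibrium estimate.} With uniformly high $\QQ_\th$-probability, I would show $\pi(\sC) \ge \exp(-\mathsf{E}^\th_\c/h^{d-1})$ up to sub-exponential corrections. The event $\sC$ requires a Wulff-shaped droplet of plus phase of volume $(\Bcritical)^d$ inside the slightly larger region $\sW_\th(\Bcritical + \epsC)$. Treating the magnetic field as a Radon--Nikodym derivative exactly as in the proof of Proposition~\ref{lem:stability}, this reduces to the $h = 0$ lower bound of Proposition~\ref{lower bound} (applied with $b_1 = 0$ and $b = \Bcritical$) combined with the volume gain $\b m^* (\Bcritical)^d/h^{d-1}$ from the field. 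By \eqref{def phi-sW} the net cost is precisely $\mathsf{E}^\th_\c/h^{d-1}$ at leading order.

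\emph{Mixing.} Let $\gamma := \exp(-\d/(2h^{d-1}))$ be the spectral-gap lower bound from \eqref{gap asd}. The standard $\chi^2$-decay for reversible Markov chains, applied to the initial Dirac mass at the all-minus configuration, yields
\[
\max_\s \left| \frac{\mu_t(\s)}{\pi(\s)} - 1 \right| \le \frac{e^{-\gamma t}}{\min_\s \pi(\s)}, \qq \min_\s \pi(\s) \ge \exp(-O(|\L|)) = \exp(-O(h^{-d})).
\]
Since $\gamma t = \exp(\d/(2h^{d-1}))$ dominates $O(h^{-d})$ doubly-exponentially in $1/h$, the right-hand side is negligible, and $\mu_t \ge \tfrac12 \pi$ as measures.

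\emph{Coupling and main obstacle.} Set $p := \tfrac12 \pi(\sC)$, which meets the claimed lower bound $\exp(-\mathsf{E}^\th_\c/h^{d-1})$. The mixing inequality gives $\mu_t(\s) \ge p\,\hm_\L(\s)$ for every $\s \in \sC$, and trivially outside $\sC$. I realize the coupling by acceptance--rejection: given the Glauber sample $\s^{0,-}_t$, set an indicator $B := 1$ with conditional probability $p\,\hm_\L(\s^{0,-}_t)/\mu_t(\s^{0,-}_t) \in [0,1]$ when $\s^{0,-}_t \in \sC$, and $B := 0$ otherwise; on $\{B = 1\}$ take $\hat\s := \s^{0,-}_t$, and on $\{B = 0\}$ sample $\hat\s$ from $\hm_\L$ independently of $\s^{0,-}_t$. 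A direct computation shows $\hat\s \sim \hm_\L$, $\PP(B = 1 \mid \hat\s) = p$, and $\{B = 1\} \subseteq \{\hat\s = \s^{0,-}_t\}$, which is the claimed statement. The main obstacle is the sharp equilibrium lower bound on $\pi(\sC)$: one must verify that requiring the plus-volume to sit inside $\sW_\th(\Bcritical + \epsC)$ (rather than spread anywhere in $\L$) still costs only $\mathsf{E}^\th_\c/h^{d-1}$ to leading order, uniformly in the $J$-realizations permitted by $\QQ_\th$. The mixing and coupling steps are routine once this estimate and \eqref{gap asd} are in place.
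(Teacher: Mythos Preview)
Your proposal is correct and follows essentially the same route as the paper. The paper's proof is a two-line compression of your three steps: for the equilibrium estimate it simply invokes the final displayed inequality in the proof of Proposition~\ref{lem:stabilityR} with $a=0$ (which already contains the Radon--Nikodym/field argument you outline and handles the localization to $\sW_\th(\Bcritical+\epsC)$ via the choice of $b$ there), and for the mixing step it cites the same $L^\infty$-type spectral-gap inequality you use, leaving the acceptance--rejection coupling implicit. Your explicit construction of $\hat\s$ is a welcome clarification of a step the paper takes for granted.
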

\begin{proof}
Taking $a=0$ in the last inequality in the proof of Proposition~\ref{lem:stabilityR}, we can assume that $\mu^{J,-,h}_\L(\sC)\ge 2\exp(-\mathsf{E}^\th_\c/h^{d-1})$.
By \eqref{gap asd} and a Markov chain mixing inequality (i.e. \cite[(59)]{SchonmannSlowDropletRelaxation}) the total variation distance between $\s^{0,-}_{\L,-,h;\exp(\d/h^{d-1})}$ and $\mu^{J,-,h}_\L$ is less than $\exp(-\mathsf{E}^\th_\c/h^{d-1})$.
\end{proof}

\subsection{Growing in a Summertop cone}
In this section we will use the ``inverted space-time pyramids'' of \cite{schonmann-shlosman} to show that under $\QQ_\th$, droplets of plus phase tends to expand from $\WW_\th(\Bmin)$ to $\WW_\th(\Bmax)$ with high probability.

With $\d>0$, and with reference to \eqref{increasing sets} and \eqref{ST}, consider the space-time region
\begin{gather*}
\triangledown=\triangledown(\Bmin,\Bmax,\d,\th):=\mathrm{ST}(\L_0,\dots,\L_n;t_0<\dots<t_{n+1}),\\
\L_i:=\De_\th^{i+|\WW_\th(\Bmin)|},\q t_i:=i\exp(\d/h^{d-1}), \q n:=|\WW_\th(\Bmax)|-|\WW_\th(\Bmin)|.
\end{gather*}
For each $i$ let $\hm_{\L_i}=\mu^{J,-,h}_{\L_i} ( \bcdot \mid \sC)$ with $\epsC=(\Bmin-\Bcritical)/3$.

For $\n\in\Si_{\WW_\th(\Bmin)}^-$ consider the event
\begin{align*}
G_\n:=\left\{\s^{0,\n}_{\triangledown,-,h;t_{n+1}}=\s^{t_n,+}_{\triangledown,-,h;t_{n+1}}\right\}.
\end{align*}
For $\n\in\sC$, $G_\n$ describes the plus phase spreading from $\WW_\th(\Bmin)$ to $\WW_\th(\Bmax)$ in time $t_n$.
The event $G_\n$ depends only on the elements of the graphical construction contained in $\triangledown$.
Here is an extension of \cite[Proposition 3.2.2]{schonmann-shlosman} to the dilute Ising model.
\begin{proposition}\label{prop322}
There are positive constants $\d_0,C,\propconstanta$ such that if $0<\d\le\d_0$ then with high $\QQ_\th$-probability
\begin{align*}
\int  \PP_J(G_\n) \dd\hm_{\L_0}(\n) \ge 1-C \exp(-\propconstanta/h).
\end{align*}
\end{proposition}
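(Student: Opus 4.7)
The plan is to establish $G_\n$ by a slab-by-slab induction along the space-time pyramid $\triangledown$, following the strategy of \cite[Proposition~3.2.2]{schonmann-shlosman} but substituting the higher-dimensional $L_1$-theory and spectral-gap results developed in Sections~\ref{sec:technical} and~\ref{sec:mix}. Monotonicity of the graphical construction gives
\[
\s^{t_i,-}_{\triangledown,-,h;t}\le\s^{0,\n}_{\triangledown,-,h;t}\le\s^{t_i,+}_{\triangledown,-,h;t},\qq t\ge t_i,
\]
so it suffices to show that the $\n$-started configuration retains a large plus droplet throughout $[0,t_n]$ and then couples with the $+$-started process on the final slab $[t_n,t_{n+1}]$.

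I would introduce, for $i=0,\ldots,n$, a ``good'' event $A_i$ stating that the magnetization profile $\MM_K^-$ of $\s^{0,\n}_{\triangledown,-,h;t_i}$ dominates (up to $\epsStab$) that of $\sW_\th(b_i)$, where $\Bmin=b_0<b_1<\cdots<b_n\approx\Bmax$ interpolates linearly. The base event $A_0$ holds for every $\n\in\sC$. For the inductive step from $A_i$ to $A_{i+1}$, on the slab $[t_i,t_{i+1}]$ the spatial domain grows by a single vertex while the Glauber dynamics run in $\L_{i+1}$ with $-$ boundary conditions for the long time $\exp(\d/h^{d-1})$. Using the block-dynamics machinery of Lemma~\ref{block-dynamics} with blocks chosen so that each update preserves the plus droplet---this is guaranteed by combining Propositions~\ref{lem:stability} and~\ref{lem:stabilityR} with the Hausdorff-stability and spatial-mixing estimates of Sections~\ref{sec:hausdorff} and~\ref{sec:mixing}---one obtains a conditional spectral gap of order $\exp(-\d/(2h^{d-1}))$. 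A Markov-chain mixing bound then yields $\PP_J(A_{i+1}^c\mid A_i)\le\exp(-c/h)$ uniformly in $i$, and since $n=\O(h^{-d})$ is polynomial in $1/h$, a union bound gives $\PP_J(A_n^c)\le Cn\exp(-c/h)$. On $A_n$, the configuration $\s^{0,\n}_{t_n}$ dominates a near-$+$ configuration throughout $\L_n$, so Proposition~\ref{sg2} together with the spatial-mixing comparison \eqref{example:stability} forces the coupling with $\s^{t_n,+}_{\triangledown,-,h}$ on $[t_n,t_{n+1}]$ with probability $1-\exp(-\propconstanta/h)$.

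The main obstacle is the absence of a useful unconditional spectral gap for $\mu^{J,-,h}_{\L_i}$: metastability forces the true gap to be of order $\exp(-\mathsf{E}^\th_\c/h^{d-1})$, far too small to equilibrate within the window $\exp(\d/h^{d-1})$ when $\d<\mathsf{E}^\th_\c$. The whole argument therefore relies on confining the dynamics to the basin of the plus droplet, which requires the conditioning on $\sC$ together with the Hausdorff-stability bounds of Section~\ref{sec:hausdorff}. A secondary subtlety is that as $i$ ranges from $0$ to $n$, the governing spectral-gap input must transition from Proposition~\ref{sg3} (small droplet, $b_1=0$) to Proposition~\ref{sg2} (mature droplet, $b_1>\Bcritical$); this is arranged by switching the block decomposition at an intermediate index. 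Choosing $\d_0$ sufficiently small relative to the constants $\stabilityConstant,\stabilityConstant',\energyStabilityConstant$ from Section~\ref{sec:technical} guarantees that the per-step failure probability $\exp(-c/h)$ comfortably dominates the slab count $n=\O(h^{-d})$, validating the union bound.
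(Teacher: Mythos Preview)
Your slab-by-slab induction is the right skeleton, but the mechanism you propose for the inductive step has a real gap. You invoke a ``conditional spectral gap'' for the $-$-boundary dynamics restricted to the basin of the plus droplet, but no such result is available: conditioning on $\sC$ destroys the Markov property, and nothing in Sections~\ref{sec:technical}--\ref{sec:mix} yields a spectral gap for the conditioned chain. The paper avoids this entirely. Instead of conditioning, it introduces a \emph{modified} dynamics $\hat\s$ that suppresses transitions out of $\sC$; this modified chain is genuinely Markov, has $\hm_{\L_i}$ as stationary distribution, and agrees with the true dynamics up to a stopping time $\tau^\zeta_i$ whose early arrival is controlled by the bound $\hm_{\L_i}(\pd\sC)\le\exp(-3\d_0/h^{d-1})$ from Proposition~\ref{lem:stabilityR}. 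The step-$i$ coupling is then achieved not by a gap for the conditioned $-$-dynamics, but by running the $+$-started process to equilibrium under the fast-mixing measures $\mu^{J,+,h}_{\WW_\th'(b)}$ (Proposition~\ref{sg1}) on an inner region and $\mu^{J,(+,-),h}_{\WW_\th'(b_{1/3},b)}$ (Proposition~\ref{sg2}) on an outer annulus, and comparing both limits to $\hm_{\L_i}$ via the spatial-mixing estimates \eqref{example:stability-R}--\eqref{example:stability-R-}. The transfer from $\hm_{\L_{i-1}}$ to $\hm_{\L_i}$ across the single added vertex is handled by the finite-energy constant~$\a$.

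Two smaller points. First, your appeal to Proposition~\ref{sg3} is misplaced: that result concerns $-$-boundary dynamics in a region just above $\Bcritical$ where the \emph{minus} phase is dominant, which is the wrong regime here; the paper uses Propositions~\ref{sg1} and~\ref{sg2} throughout, with the split between them fixed by the thresholds $b_{1/3},b_{2/3}\in(\Bcritical,\Bmin)$ rather than evolving with $i$. Second, your base event $A_0$ does not hold for every $\n\in\sC$: membership in $\sC$ only asserts $\int_{\sW_\th(\Bcritical+\epsC)}\MM_K^-\ge(\Bcritical)^d$, which is weaker than the profile domination of $\sW_\th(\Bmin)$ you claim. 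The paper sidesteps this by making the per-step target the event $G^i_\zeta=\{\s^{t_{i-1},\zeta}_{\triangledown,-,h;t_{i+1}}=\s^{t_i,+}_{\triangledown,-,h;t_{i+1}}\}$, integrated against $\hm_{\L_{i-1}}$, rather than a profile event.
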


\begin{proof}[Proof of Proposition~\ref{prop322}]
We will assume that $J$ belongs to a certain event with high $\QQ_\th$-probability; the set is defined implicitly by our use of results from Section~\ref{sec:technical}.

For $i=1,\dots,n$ and $\zeta\in\Si_{\L_{i-1}}^-$ let $G^i_\zeta=\left\{\s^{t_{i-1},\zeta}_{\triangledown,-,h;t_{i+1}}=\s^{t_i,+}_{\triangledown,-,h;t_{i+1}}\right\}$;
\begin{align*}
G^1_\n \cap \left( \bigcap_{i=2}^n G^i_+\right) \implies G_\n.
\end{align*}
By monotonicity $G^i_\zeta\subset G^i_+$. It is sufficient to show that for each $i$,
\begin{align}\label{collect0}
\int  \PP_J(G^i_\zeta) \dd\hm_{\L_{i-1}}(\zeta) \ge 1-C \exp(-\propconstanta/h).
\end{align}
With reference to Proposition~\ref{lem:stabilityR}, if we start the dynamics with initial distribution $\hm_{\L_{i-1}}$ we expect to stay inside $\sC$ for a long time.
Let $(\hat{\s}^{s,\zeta}_{\triangledown,-,h;t})_{t\ge s}$ denote the Markov chain obtained from the graphical construction by suppressing any jumps from $\sC$ to $\sC^\c$.
By introducing a stopping time
\[
\tau^\zeta_i=\inf\{t\ge t_i : \s^{t_i,\zeta}_{\triangledown,-,h;t}\not=\hat{\s}^{t_i,\zeta}_{\triangledown,-,h;t}\},
\]
we will see that the modified dynamics are likely to agree with the regular dynamics over the interval $[t_i,t_{i+1}]$.

Let $\s^x$ denote the configuration obtained from $\s$ by flipping the spin at $x$, and let
\begin{align*}
\pd\sC=\{\s\in\sC:\exists x, \s^x \in \sC^\c\}.
\end{align*}
Proposition~\ref{lem:stabilityR} gives an upper bound on $\hm_{\L_i}(\pd\sC)$. Given $\Bmin$ we can find $\d_0>0$ such that
\begin{align}
\label{phi_bound}
\hm_{\L_i}(\pd\sC) \le \exp(-3\d_0/h^{d-1}).
\end{align}
If the starting state $\zeta$ is sampled from $\hm_{\L_i}$ then the process $(\hat{\s}^{t_i,\zeta}_{\triangledown,-,h;t})_{t\in[t_i,t_{i+1}]}$ is stationary.
By \eqref{phi_bound} (cf. \cite[(2.12)]{schonmann-shlosman}), if $\d\le\d_0$ and $h$ is sufficiently small,
\begin{align}\label{stopping time}
\int \PP_J(\tau^\zeta_i\le t_{i+1}) \dd\hm_{\L_i}(\zeta) \le \exp(-\d_0/h^{d-1}).
\end{align}
For some $x$, $\L_i=\L_{i-1}\cup\{x\}$. We will need a bound on the effect of adding this extra vertex has on the conditional Ising measures $(\hm_{\L_i})_i$.
By the Ising model's finite-energy property, for any $h_0>0$,
\begin{align}\label{def alpha}
\a:=\inf_{0<h<h_0} \inf_{\zeta\in\Si} \inf_J \inf_{s=\pm1} \mu^{J,\zeta,h}_{\{0\}} (\s(0)=s) >0.
\end{align}
By the Ising model's Markov property, for $\zeta\in\sC\cap\Si_{\L_{i-1}}^-$,
\begin{align*}
\hm_{\L_i} (\zeta)/\hm_{\L_{i-1}} (\zeta) =
\mu^{J,-,h}_{\L_i}(\s(x)=-1\mid\sC)\ge \a.
\end{align*}
Therefore (cf. \cite[(3.28)]{schonmann-shlosman}),
\begin{align}\label{3.28}
\int & \PP_J((G^i_\zeta)^\c)\dd\hm_{\L_{i-1}}(\zeta)\\
=&\int  \PP_J(\s^{t_{i-1},\zeta}_{\triangledown,-,h;t_{i+1}} \not= \s^{t_i,+}_{\triangledown,-,h;t_{i+1}})\dd\hm_{\L_{i-1}}(\zeta)\nonumber\\
\le &\int \PP_J(\s^{t_i,\zeta}_{\triangledown,-,h;t_{i+1}} \not= \s^{t_i,+}_{\triangledown,-,h;t_{i+1}}) \dd\hm_{\L_{i-1}}(\zeta)
+ \int \PP_J(\tau^\zeta_{i-1}\le t_i) \dd\hm_{\L_{i-1}}(\zeta) \nonumber\\
\le \a^{-1} &\int \PP_J(\s^{t_i,\zeta}_{\triangledown,-,h;t_{i+1}}\not=\s^{t_i,+}_{\triangledown,-,h;t_{i+1}}) \dd\hm_{\L_i}(\zeta) + \exp(-\d_0/h^{d-1}).\nonumber
\end{align}
Set
\begin{align}\label{b alpha}
b_\gamma=(1-\gamma)\Bcritical+\gamma \Bmin, \qq \gamma\in[0,1].
\end{align}
Thus $\Bcritical=b_0<b_{1/3}<b_{2/3}<b_1=\Bmin$.
Choose $b$ such that $\WW_\th'(b)=\L_i$.
By monotonicity and the invariance of the modified dynamics with respect to $\hm_{\WW_\th'(b)}$,
\begin{align*}
    &\int  \PP_J(\s^{t_i,\zeta}_{\triangledown,-,h;t_{i+1}}\not=\s^{t_i,+}_{\triangledown,-,h;t_{i+1}}) \dd\hm_{\WW_\th'(b)}(\zeta)\\
\le &\int \PP_J(\s^{t_i,+}_{\triangledown,-,h;t_{i+1}}>\hat{\s}^{t_i,\zeta}_{\triangledown,-,h;t_{i+1}})+\hm_{\WW_\th'(b)} (\tau_i^\zeta\le t_{i+1})  \dd\hm_{\WW_\th'(b)}(\zeta)\\
\le &\int \sum_{y\in\WW_\th'(b)} \Big\{ \PP_J(\s^{t_i,+}_{\triangledown,-,h;t_{i+1}}(y)=1)-\PP_J(\hat{\s}^{t_i,\zeta}_{\triangledown,-,h;t_{i+1}}(y)=1)\Big\} \dd\hm_{\WW_\th'(b)}(\zeta)\\
&+\exp(-\d_0/h^{d-1}) \\
\le &\sum_{y\in\WW_\th(b_{2/3})} \Big\{\PP_J(\s^{0,+}_{\WW_\th'(b),+,h;\exp(\d/h^{d-1})}(y)=1)-\hm_{\WW_\th'(b)}(\s(y)=1)\Big\}\\
+&   \sum_{y\in\WW_\th'(b_{2/3},b)} \Big\{\PP_J(\s^{0,+}_{\WW_\th'(b_{1/3},b),(+,-),h;\exp(\d/h^{d-1})}(y)=1)-\hm_{\WW_\th'(b)}(\s(y)=1)\Big\}\\
&+\exp(-\d_0/h^{d-1}).
\end{align*}
For $y\in\WW_\th(b_{2/3})$, by Proposition~\ref{sg1} and Markov chain mixing \cite[(59)]{SchonmannSlowDropletRelaxation},
\begin{align*}
&|\PP_J(\s^{0,+}_{\WW_\th'(b),+,h;\exp(\d/h^{d-1})}(y)=1)-\mu^{J,+,h}_{\WW_\th'(b)}(\s(y)=1)|\\
&\le \exp\left[-\exp(\d/h^{d-1})\mathrm{gap}(\WW_\th'(b),+,h)\right]\Big/\mu^{J,+,h}_{\WW_\th'(b)}(\s= +)\\
&\le \exp\left[-\exp(\d/(2h^{d-1}))\right].
\end{align*}
Similarly for $y\in\WW_\th'(b_{2/3},b)$, by Proposition~\ref{sg2},
\begin{align*}
&|\PP_J(\s^{0,+}_{\WW_\th'(b_{1/3},b),(+,-),h;\exp(\d/h^{d-1})}(y)=1)-\mu^{J,(+,-),h}_{\WW_\th'(b_{1/3},b)}(\s(y)=1)|\\
&\le \exp\left[-\exp(\d/h^{d-1})\mathrm{gap}(\WW_\th'(b_{1/3},b),(+,-),h)\right] \Big/ \mu^{J,+,h}_{\WW_\th'(b_{1/3},b)}(\s=+)\\
&\le \exp\left[-\exp(\d/(2h^{d-1}))\right].
\end{align*}
By the above
\begin{align}
&\int \nonumber \PP_J(\s^{t_i,\zeta}_{\triangledown,-,h;t_{i+1}}\not=\s^{t_i,+}_{\triangledown,-,h;t_{i+1}}) \dd\hm_{\WW_\th'(b)}(\zeta)\\
& \le\sum_{y\in\WW_\th(b_{2/3})}\mu^{J,+,h}_{\WW_\th'(b)}(\s(y)=+1)-\hm_{\WW_\th'(b)}(\s(y)=+1)\label{collect1}\\
&+   \sum_{y\in\WW_\th'(b_{2/3},b)}\mu^{J,(+,-),h}_{\WW_\th'(b_{1/3},b)}(\s(y)=+1)-\hm_{\WW_\th'(b)}(\s(y)=+1)\nonumber\\
&+   \exp(-\d_0/h^{d-1})+ |\WW_\th'(b)|\exp\left[-\exp(\d/(2h^{d-1}))\right].\nonumber
\end{align}
By \eqref{example:stability-R} and \eqref{example:stability-R-},
\begin{align}\label{collect2}
   & \sum_{y\in\WW_\th(b_{2/3})}\mu^{J,+,h}_{\WW_\th'(b)}(\s(y)=+1)-\hm_{\WW_\th'(b)}(\s(y)=+1)\\
  +& \sum_{y\in\WW_\th'(b_{2/3},b)}\mu^{J,(+,-),h}_{\WW_\th'(b_{1/3},b)}(\s(y)=+1)-\hm_{\WW_\th'(b)}(\s(y)=+1)\nonumber\\
\le& |\WW_\th'(b)| \exp(-c/h).   \nonumber
\end{align}
Inequality \eqref{collect0} now follows by \eqref{3.28}, \eqref{collect1} and \eqref{collect2}.
\end{proof}

\subsection{Escaping from Summertop-cones}

In Proposition~\ref{prop322} we considered space-time pyramids.
Consider now ``space-time parallelepipeds''. From now on we will write $2\pi$ in place of $\th$ to make it clear that $\th$ refers to the angle of the catalyst cone.
Let $a$ denote a positive constant and let $b=1.01 \Bcriticall$. We can find a sequence of graphs $\L_0,\L_1,\dots,\L_n$ such that
\begin{romlist}
\item $\L_0=\WW_{2\pi}(b)$,
\item $\L_n=\WW_{2\pi}(b)+aN\be_1$,
\item $\L_{i+1}$ differs from $\L_i$ by adding a vertex or removing a vertex,
\item for any $i$, for some $k_i\in(0,aN)$, $\L_i$ differs from $\WW_\th(b)+k_i\be_1$ by at most a mesoscopic layer of vertices around the boundary, and
\item $n=\O(a b^{d-1}/h^d)$,
\end{romlist}
Let
\begin{align}\label{lozenge}
\lozenge=\lozenge(a,b,\d)=\mathrm{ST}(\L_0,\dots,\L_n;t_0<\dots<t_{n+1}),
\end{align}
with $t_i:=i\exp(\d/h^{d-1})$. In Figure~\ref{fig:grow}, the dotted lines indicate the area swept out by a space-time parallelepiped that starts inside the copy of $\WW_\th(\Bmax)$.

For $\n\in\Si_{\L_0}^-$ consider the event
\begin{align*}
G_\n:=\left\{\s^{0,\n}_{\lozenge,-,h;t_{n+1}}=\s^{t_{n},+}_{\lozenge,-,h;t_{n+1}}\right\}.
\end{align*}
Here is an extension of Proposition~\ref{prop322} to $\lozenge$.
Let $\epsC=(b- \Bcriticall)/3$ and let $\hm_{\L_i}:=\mu^{J,-,h}_{\L_i} ( \bcdot \mid \sC_i)$ with
\[\sC_i=\left\{\s:\int_{\sW_\th(\Bcritical+\epsC)+(k_i/N)\be_1} \MM_K^{-}\dd \sL^d \ge (\Bcritical)^d\right\}.
\]
\begin{proposition}\label{prop322'}
Let $\lozenge$ be defined according to \eqref{lozenge} with $\d\le\d_0$.
There are positive constants $C,\propconstanta$ such that with high $\QQ$-probability
\begin{align*}
\int \PP_J(G_\n) \dd\hm_{\L_0}(\n) \ge 1-C \exp(-\propconstanta/h).
\end{align*}
\end{proposition}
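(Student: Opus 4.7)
The plan is to follow the proof of Proposition~\ref{prop322} essentially verbatim, with three modifications appropriate to the new geometry: the region $\L_i$ is (up to a mesoscopic shell) a \emph{translate} of a full Wulff $\sW_{2\pi}(b)+(k_i/N)\be_1$ rather than a Summertop shape; the move $\L_{i-1}\to\L_i$ can now either add or remove a vertex; and we work under $\QQ$ rather than $\QQ_\th$, since no free boundary along $\pd^\star\sA_\th$ is required.

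First, define the single-step event $G^i_\zeta=\{\s^{t_{i-1},\zeta}_{\lozenge,-,h;t_{i+1}}=\s^{t_i,+}_{\lozenge,-,h;t_{i+1}}\}$. By monotonicity and Remark~\ref{concat}, $G^1_\n\cap\bigcap_{i\ge 2} G^i_+\subset G_\n$, so it suffices to show
\[
\int\PP_J\!\left((G^i_\zeta)^\c\right)\dd\hm_{\L_{i-1}}(\zeta)\le C\exp(-\propconstanta'/h)
\]
uniformly in $i\in\{1,\dots,n\}$. Since $n=\O(ab^{d-1}/h^d)$ grows only polynomially in $1/h$, a union bound over $i$ preserves the $\exp(-\propconstanta/h)$-type estimate. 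Next, introduce the stopping time $\tau^\zeta_i:=\inf\{t\ge t_i:\s^{t_i,\zeta}_{\lozenge,-,h;t}\ne\hat\s^{t_i,\zeta}_{\lozenge,-,h;t}\}$ obtained by suppressing jumps out of $\sC_i$. Because $\sC_i$ is just a translate of the set $\sC$ used in Proposition~\ref{lem:stabilityR} (with $\sW_\th$ replaced by $\sW_{2\pi}$ translated by $(k_i/N)\be_1$), that proposition yields, for $\d\le\d_0$ sufficiently small,
\[
\hm_{\L_i}(\pd\sC_i)\le\exp(-3\d_0/h^{d-1}),\qquad \int\PP_J(\tau^\zeta_i\le t_{i+1})\dd\hm_{\L_i}(\zeta)\le\exp(-\d_0/h^{d-1}),
\]
by the same stationarity argument leading to \eqref{stopping time}.

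To pass from $\hm_{\L_{i-1}}$ to $\hm_{\L_i}$ as in \eqref{3.28}, note that $\L_i$ differs from $\L_{i-1}$ by either inserting or removing a single vertex $x$; in either case the finite-energy bound \eqref{def alpha} gives $\hm_{\L_i}(\zeta)/\hm_{\L_{i-1}}(\zeta)\ge\a$ (when adding $x$, use $\hm_{\L_i}(\zeta)\ge\a\,\hm_{\L_{i-1}}(\zeta)$ by conditioning on $\s(x)=-1$; when removing $x$, the same bound holds with $x$ playing the symmetric role). The chain of inequalities in \eqref{3.28} then carries over verbatim, reducing the problem to bounding $\int\PP_J(\s^{t_i,\zeta}_{\lozenge,-,h;t_{i+1}}\ne\s^{t_i,+}_{\lozenge,-,h;t_{i+1}})\dd\hm_{\L_i}(\zeta)$. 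Splitting $\L_i$ into the inner region $\WW_{2\pi}(b_{2/3})+k_i\be_1$ and the outer shell, where $b_\gamma$ is defined as in \eqref{b alpha} with $\Bmin$ replaced by $b=1.01\Bcriticall$, one reproduces the estimate \eqref{collect1} using Propositions~\ref{sg1} and \ref{sg2} (whose proofs applied to the translated Wulff give exactly the same spectral-gap bounds, the translation being irrelevant) together with Markov chain mixing.

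Finally, the spatial-mixing inputs \eqref{example:stability-R} and \eqref{example:stability-R-} are applied in their $\th=2\pi$, translated form to conclude \eqref{collect2}. The only substantive point to verify is that these mixing estimates hold under $\QQ$ rather than $\QQ_\th$: this is automatic because the entire Section~\ref{sec:technical} machinery only conditioned on closed boundary edges to handle the \emph{free} portion of $\pd\sW_\th$, and the full Wulff shape $\sW_{2\pi}$ has no such free boundary, so $\QQ_{2\pi}=\QQ$. The main obstacle is bookkeeping the sliding geometry: one must confirm that the $\QQ$-event on which all the above estimates hold simultaneously for every $i\in\{1,\dots,n\}$ still has probability $1-C\exp(-c/\sqrt h)$. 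This follows because each estimate is invariant under translation by $k_i\be_1$, so by stationarity of $\QQ$ the bad $\QQ$-event for a given $i$ has probability $\le C\exp(-c/\sqrt h)$, and a union bound over the $n=\O(h^{-d})$ values of $i$ is absorbed by the tail $\exp(-c/\sqrt h)$ since $\exp(-c/\sqrt h)\cdot h^{-d}\to 0$ as $h\to 0$.
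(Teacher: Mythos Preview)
Your overall strategy matches the paper's: reduce to single-step events, control escape from $\sC_i$ via Proposition~\ref{lem:stabilityR}, transfer from $\hm_{\L_{i-1}}$ to $\hm_{\L_i}$ by finite energy, and close with the spectral-gap and spatial-mixing estimates. The one substantive gap is your handling of the vertex-\emph{removal} step, which is exactly the place the paper flags as ``the only place where the change is important''.

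When $\L_i=\L_{i-1}\sm\{x\}$, the graphical construction forces $\s(x)=-1$ at time $t_i$. Hence if the modified dynamics on $[t_{i-1},t_i]$ preserve $\hm_{\L_{i-1}}$, the law on $\Si_{\L_i}^-$ at time $t_i$ is the pushforward $\mu$ obtained by sampling from $\hm_{\L_{i-1}}$ and then setting $\s(x)=-1$. Your sentence ``the same bound holds with $x$ playing the symmetric role'' does not give $\mu\le\a^{-1}\hm_{\L_i}$: the symmetry breaks because the conditioning events $\sC_{i-1}$ and $\sC_i$ are \emph{different translates}, so there is no clean Radon--Nikodym comparison between $\hm_{\L_{i-1}}$ and $\hm_{\L_i}$. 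The paper resolves this by introducing the intermediate measure $\mu'=\mu^{J,-,h}_{\L_i}(\,\cdot\mid\sC_{i-1})$, i.e.\ the $\L_i$-Gibbs measure conditioned on the \emph{old} event $\sC_{i-1}$, proving $\mu(\zeta)\le\a^{-1}\mu'(\zeta)$ via finite energy, and then running the remainder of the argument (the analogue of \eqref{collect1}--\eqref{collect2}) with $\mu'$ in place of $\hm_{\L_i}$. Correspondingly, the modified dynamics should suppress jumps out of $\sC_{i-1}$ on $[t_{i-1},t_i]$, not out of $\sC_i$ as you wrote. Once you make this correction your argument goes through and coincides with the paper's.
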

\begin{proof}
We can adapt the proof of Proposition~\ref{prop322}, showing that \eqref{collect0} holds when, for example, $\L_i=\L_{i-1}\sm\{x\}$ for some vertex $x$ on the boundary of $\L_{i-1}$. Let $(\hat{\s}^{s,\zeta}_{\lozenge,-,h;t})_{t\ge s}$ denote the Markov chain obtained from the graphical construction by suppressing any jumps from $\sC_{i-1}$ to $\sC_{i-1}^\c$.
The only place where the change is important is in inequality \eqref{3.28}. Recall that the spin of vertices leaving $\lozenge$ are set to $-1$. Let $\mu$ denote the measure obtained by sampling from $\hm_{\L_{i-1}}$ and then setting the spin at $x$ equal to $-1$.
Let $\mu'=\mu^{J,-,h}_{\L_i}(\bcdot\mid\sC_{i-1})$.
By the definition of $\a$ \eqref{def alpha},
\[
\mu(\zeta) \le \a^{-1}\mu'(\zeta), \qq \zeta\in\Si_i^-.
\]
In place of \eqref{3.28} we have that
\begin{align*}
\int & \PP_J((G^i_\zeta)^\c)\dd\hm_{\L_{i-1}}(\zeta)\\
=&\int  \PP_J(\s^{t_{i-1},\zeta}_{\lozenge,-,h;t_{i+1}} \not= \s^{t_i,+}_{\lozenge,-,h;t_{i+1}})\dd\hm_{\L_{i-1}}(\zeta)\\
\le &\int \PP_J(\s^{t_i,\zeta}_{\lozenge,-,h;t_{i+1}} \not= \s^{t_i,+}_{\lozenge,-,h;t_{i+1}}) \dd\mu(\zeta)
+ \int \PP_J(\tau^\zeta_{i-1}\le t_i) \dd\hm_{\L_{i-1}}(\zeta) \\
\le \a^{-1}&\int \PP_J(\s^{t_i,\zeta}_{\lozenge,-,h;t_{i+1}}\not=\s^{t_i,+}_{\lozenge,-,h;t_{i+1}}) \dd\mu'(\zeta) + \exp(-\d_0/h^{d-1}).
\end{align*}
The rest of the proof follows mutatis mutandis.
\end{proof}

\subsection{Growth on a rescaled lattice}\label{sec:rescaling}
In Proposition~\ref{prop322} we require $\Bmin> \Bcritical$. If in addition $\Bmin>B_\mathrm{root}^\th$ then we get the following stronger result corresponding to \cite[Proposition 3.2.1]{schonmann-shlosman}.
\begin{proposition}\label{prop321}
Let $\Bmaxx>\Bminn>B_\mathrm{root}^{2\pi}$ and $\d>0$.
Consider $\triangledown=\triangledown(\Bminn,\Bmaxx,\d,2\pi)$.
There are positive constants $C,\propconstanta$ such that  with high $\QQ$-probability
\begin{align*}
\int  \PP_J(G_\n) \dd\mu^{J,-,h}_{\WW_{2\pi}(\Bminn)}(\n)\ge 1-C \exp(-\propconstanta/h).
\end{align*}
Moreover, $\propconstanta$ is a function of $\Bminn$ and $\propconstanta\to\oo$ as $\Bminn\to\oo$.
\end{proposition}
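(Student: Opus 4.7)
The proof parallels that of Proposition~\ref{prop322} but simplifies substantially because the hypothesis $\Bminn > B_\mathrm{root}^{2\pi}$ guarantees $\mathsf{E}^{2\pi}(b) < 0$ for all $b\in[\Bminn,\Bmaxx]$. The plus phase is thus thermodynamically favored throughout the pyramid, so the conditioning on $\sC$ that was essential in the Summertop case becomes unnecessary, and we may work directly with the unconditioned measures $\mu^{J,-,h}_{\L_i}$. As before, decompose the pyramid into consecutive slices by writing $G_\n \supseteq G^1_\n \cap \bigcap_{i=2}^n G^i_+$ where $G^i_\zeta=\{\s^{t_{i-1},\zeta}_{\triangledown,-,h;t_{i+1}}=\s^{t_i,+}_{\triangledown,-,h;t_{i+1}}\}$, and reduce to showing that for each $i$,
\[
\int \PP_J(G^i_\zeta)\,\dd\mu^{J,-,h}_{\L_{i-1}}(\zeta) \ge 1 - C\exp(-\propconstanta/h).
\]

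\textbf{Replacement of the conditioning step.} Applying Proposition~\ref{lem:stability} with $b_1=0$, $b_2=b$ (the assumption $\mathsf{E}^{2\pi}(b_1)>\mathsf{E}^{2\pi}(b_2(1-\epsStab))$ is satisfied for $\epsStab$ small since $b>B_\mathrm{root}^{2\pi}$), we see that $\mu^{J,-,h}_{\L_i}$ concentrates on profiles near $\sW_{2\pi}(b)$ with probability $1-\exp(-b\,\energyStabilityConstant/h^{d-1})$. This provides the analogue of the bound \eqref{phi_bound} on $\hm_{\L_i}(\pd\sC)$ without any conditioning, and it allows the stopping-time argument of the previous proof to be replaced by a direct deviation bound: the stationary measures themselves guarantee the droplet does not dissolve. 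The Radon--Nikodym estimate \eqref{3.28}, based on the finite-energy constant $\a$ from \eqref{def alpha}, applies identically to the unconditioned measures since adding or removing a single boundary vertex changes probabilities by at most a factor $\a^{-1}$.

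\textbf{Spectral gap and equilibrium comparison.} Using the $b_\gamma$ interpolation \eqref{b alpha} with $\Bcritical$ replaced by a value slightly below $\Bminn$, split the coupling discrepancy into contributions from the core $\WW_{2\pi}(b_{2/3})$ and the annulus $\WW_{2\pi}'(b_{2/3},b)$. On the core, Proposition~\ref{sg1} (with $\th=2\pi$) and the Markov chain mixing bound \cite[(59)]{SchonmannSlowDropletRelaxation} let us replace $\s^{0,+}_{\WW_{2\pi}'(b),+,h;\exp(\d/h^{d-1})}$ by its equilibrium distribution $\mu^{J,+,h}_{\WW_{2\pi}'(b)}$; on the annulus, Proposition~\ref{sg2} applies because the inner radius exceeds $\Bminn>B_\mathrm{root}^{2\pi}>\Bcritical$. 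The equilibrium measures are then matched in the bulk using the Hausdorff-stability estimates \eqref{example:stability} and \eqref{example:stability3} from Section~\ref{sec:mixing}, each contributing a discrepancy of order $|\WW_{2\pi}(b)|\exp(-c\,b/h)$.

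\textbf{Dependence of $\propconstanta$ on $\Bminn$ and main obstacle.} Every quantitative input to the proof---the energy barrier $b_2\,\energyStabilityConstant$ from Proposition~\ref{lem:stability}, the Hausdorff decay $\stabilityConstant b_2/h$ from Proposition~\ref{decay-bc}, and the $\WW_{2\pi}$-analogue of \eqref{example:stability}---scales linearly in the relevant inner radius, which is bounded below by $\Bminn$. Taking $\propconstanta=c\,\Bminn$ for a suitable $c>0$, all error terms remain summable along the chain of $n=\O(a\Bminn^{d-1}/h^d)$ slices, and $\propconstanta(\Bminn)\to\infty$ as $\Bminn\to\infty$, as claimed. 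The main technical subtlety is checking that the uniform-in-$b$ estimates of Section~\ref{sec:technical}, stated for sets $\WW_\th'(b_1,b_2)$, apply to the intermediate regions $\L_i$ which differ from translates of $\WW_{2\pi}(b)$ only by a mesoscopic boundary layer; this is exactly the content of the ``uniformly high $\QQ_\th$-probability'' convention, and with $\th=2\pi$ there is no surface conditioning so $\QQ_{2\pi}=\QQ$ and the results transfer verbatim.
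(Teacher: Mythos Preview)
Your approach matches the paper's: drop the conditioning on $\sC$, replace $\hm_{\L_i}$ by the unconditioned $\mu^{J,-,h}_{\L_i}$, and otherwise repeat the proof of Proposition~\ref{prop322} with $\th=2\pi$. Two points deserve correction, however.

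First, the stopping-time mechanism is not ``replaced by a direct deviation bound''---it is simply removed. Once you use $\mu^{J,-,h}_{\L_{i-1}}$, this measure is genuinely stationary for the (unmodified) dynamics on $\L_{i-1}$ during $[t_{i-1},t_i]$, so the passage from time $t_{i-1}$ to time $t_i$ in \eqref{3.28} is exact and the $\exp(-\d_0/h^{d-1})$ terms vanish. There is no need for an analogue of \eqref{phi_bound}; invoking Proposition~\ref{lem:stability} at that stage is superfluous.

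Second, and more substantively, your citation of \eqref{example:stability3} for the annulus term is wrong on two counts: \eqref{example:stability3} concerns $(-,+)$ boundary conditions and compares to $\mu^{J,+,h}_\L$, whereas what is needed is a comparison of $\mu^{J,(+,-),h}_{\WW_{2\pi}'(b_{1/3},b)}$ with $\mu^{J,-,h}_{\WW_{2\pi}'(b)}$. The correct reference is \eqref{example:stability-}, valid once $b_{1/3}>B_\mathrm{root}^{2\pi}$. This also matters for your final claim: the constant $\stabilityConstant'$ in \eqref{example:stability3} does not scale with the radius, so it would not yield $\propconstanta\to\oo$. The paper takes $b_\gamma=(1-\gamma)B_\mathrm{root}^{2\pi}+\gamma\Bminn$ (so that $b_{1/3}>B_\mathrm{root}^{2\pi}$ is guaranteed) and uses \eqref{example:stability} together with \eqref{example:stability-}; both bounds carry a factor $b_2\ge\Bminn$ in the exponent, and this is exactly what drives $\propconstanta=c\,\Bminn\to\oo$.
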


\begin{proof}[Proof of Proposition~\ref{prop321}]
Taking $\th=2\pi$, the proof of this proposition is very similar to the proof of Proposition~\ref{prop322}.
Define $b_\gamma=(1-\gamma)B_\mathrm{root}^{2\pi}+\gamma \Bminn$ in place of \eqref{b alpha}.
We can then simply replace $\hm_{\L_i}$ with $\mu^{J,-,h}_{\L_i}$.
The need for the modified dynamics and the stopping time has disappeared; the $\exp(-\d_0/h^{d-1})$ terms can be removed from the proof.

Let $\epsHS=(\Bminn-B_\mathrm{root}^{2\pi})/(8\Bminn)$. In \eqref{collect2}, \eqref{example:stability} and \eqref{example:stability-} replace \eqref{example:stability-R} and \eqref{example:stability-R-}, respectively. We can therefore replace the term $\exp(-c/h )$ with $2\exp(-\stabilityConstant\Bminn/h)$.
This yields the claim that $\propconstanta\to\oo$ as $\Bminn\to\oo$.
\end{proof}

\begin{proof}[Proof of Theorem~\ref{theorem:main}]
Let $\l$, $\l_2^\th$, $\thmconstant$ and $f$ refer to the corresponding quantities in the statement of the theorem. Let $\d=(\l-\l_2^\th)/3>0$.

The idea of a droplet of plus phase growing can be formalized using Proposition~\ref{prop321}.
With reference to Proposition~\ref{prop321}, choose $\Bminn$ such that $\propconstanta\ge \thmconstant+1$. Let $l=\text{diameter}(\sW_{2\pi}(\Bminn+1))$. With reference to \eqref{example:stability}, take $\Bmaxx$ to be greater than $3 (\Bminn+1)$ and large enough that for some constant $C$,
\begin{align*}
|\mu^{J,-,h}_{\WW_{2\pi}(\Bmaxx)}(f) -\mu^{J,h}(f)| \le C\|f\|_\oo \exp(-\thmconstant/h).
\end{align*}
Define a collection of overlapping translations of $\triangledown=\triangledown(\Bminn,\Bmaxx,\d,2\pi)$: let
\begin{align*}
\triangledown_{x,i}=\triangledown+ (lNx,iT) , \qq (x,i)\in\ZZ^d\times\NN,
\end{align*}
where $T$ denotes the time from the start of the first slice of $\triangledown$ to the start of the final slice of $\triangledown$,
\[
T=|\WW_{2\pi}(\Bminn,\Bmaxx)|\exp(\d/h^{d-1}).
\]
Time-wise, the top slice of $\triangledown_{x,i}$ overlaps the bottom slice of $\triangledown_{x,i+1}$.
We have chosen $\Bminn,l$ and $\Bmaxx$ so that
\begin{align*}
[\WW_{2\pi}(\Bminn) +  lN \be_1] &\cap \WW_{2\pi}(\Bminn) = \emptyset, \q\text{and}\\
[\WW_{2\pi}(\Bminn) +  lN \be_1] &\subset \WW_{2\pi}(\Bmaxx).
\end{align*}
If $\|x-y\|_1=1$, then $\triangledown_{x,0}$ and $\triangledown_{y,0}$ do not intersect at time 0, but they then `invade' each other: at time $T$, $\triangledown_{x,0}$ covers $\triangledown_{y,1}$.

Say that $x\in\ZZ^d$ is $\QQ$-{\em conductive} if, translated by $lNx$, the $\QQ$-event from Proposition~\ref{prop321} holds; $x$ is $\QQ$-conductive with high $\QQ$-probability. When $h$ is small the $\QQ$-conductive vertices form a supercritical site-percolation type of process on $\ZZ^d$. Let $G_{+,x,i}$ denote the translation by $(lNx,iT)$ of the $\PP_J$-event $G_+$ from Proposition~\ref{prop321}; space-time paths of $G_{+,x,i}$-events show how clusters of plus phase spread out once they have formed.

We will say $x\in \ZZ^d$ is a $\QQ$-catalyst if the event conditioned on in the definition of $\QQ_\th$, translated by $lNx$, occurs.
Let $D$ denote the density of $\QQ$-catalysts \eqref{prob cat}.

For a $\QQ$-catalyst to be effective, the edges that do not need to be closed should have typical dilution.
Choose $k$ minimal such that
\[
\WW_\th(\Bmax) \cap [\WW_{2\pi}(\Bmaxx)+klN\be_1] = \emptyset.
\]
With reference to Figure~\ref{fig:grow}, define a $\PP_J$-measurable event corresponding to the nucleation and escape of a plus droplet,
\[
\mathrm{Nuc}_x:=\left\{\s^{0,-}_{\triangledown_{(x+k\be_1,0)},-,h;T+\exp(\d/h^{d-1})}= \s^{T,+}_{\triangledown_{(x+k\be_1,0)},-,h;T+\exp(\d/h^{d-1})}\right\}.
\]
Figure~\ref{fig:grow} illustrates how $\mathrm{Nuc}_x$ can be written as the concatenation of the events described in Propositions~\ref{droplet creation}-\ref{prop321}; in the applications of Propositions \ref{droplet creation}-\ref{prop322'} take the value of $\d$ to be $\min\{\d_0,(\l-\l_2^\th)/6\}$.

We will say that a $\QQ$-catalyst $x$ is {\em good} if
\begin{romlist}
\item $x+k\be_1$ is $\QQ$-conductive, and
\item $\PP_J(\mathrm{Nuc}_x)\ge \exp\left(-\mathsf{E}^\th_\c/h^{d-1}\right)/2$.
\end{romlist}
$\QQ$-catalysts are good with high $\QQ$-probability. If $x$ is a good $\QQ$-catalyst, let $\mathrm{Nuc}_{x,i}$ denote $\mathrm{Nuc}_x$ translated $iT$ forward in time.

Let $\mathrm{Con}_M(x,y)$ denote the $\QQ$-event that $x$ and $y$ are joined by a simple path of exactly $M$ $\QQ$-conductive vertices, and let
\[
A:=\{x\in\ZZ^d: x-k\be_1 \text{ is a good }\QQ\text{-catalyst and }\mathrm{Con}_M(x,0)\},
\]
Take $M$ maximal such that $\triangledown_{0,3M}$ finishes before time $\exp(\l/h^{d-1})$.
By a Peierls argument, there is a constant $c>0$ such that with high $\QQ$-probability $\{|A|\ge c D M^d\}$. Assume that $|A|\ge c D M^d$.

The expected number of $(x,i)\in A\times\{0,1,\dots,M\}$ such that $\mathrm{Nuc}_{x,i}$ occurs is
\[
\exp\left(-\mathsf{E}^\th_\c/h^{d-1}\right)/2\cdot cDM^d\cdot (M+1) \ge \exp\left(\frac{\d(d+1)}{h^{d-1}}\right) \gg 1.
\]
We can assume that $\mathrm{Nuc}_{x,i}$ does occur for some $(x,i)\in A \times \{0,1,\dots,M\}$.
As $x\in A$, there is path $y_0,y_1,\dots,y_M\in\ZZ^d$ of $\QQ$-conductive sites from $y_0=x$ to $y_M=0$. The time between $\triangledown_{x,i}$ and $\triangledown_{0,3M}$ is between $2MT$ and $3MT$.

The growth of the region of plus-phase along the path $y_0,\dots,y_M$ corresponds to a directed percolation cluster on the graph $\{0,1,\dots,M\}\times\{i,i+1,\dots,3M\}$; see Figure~\ref{fig:m}.
By a Peierls argument, with high probability there is a space-time path
\[
((y_{j_k},k) : k=i,\dots,3M;\ j_i=0,\ j_{3M}=M \text{ and }\forall k, |j_k-j_{k+1}|\le 1)
\]
such that $G_{+,y_{j_k},k}$ occurs for $k=i,i+1,\dots,3M$.
\end{proof}

\begin{figure}
\begin{center}
\begin{picture}(0,0)%
\includegraphics{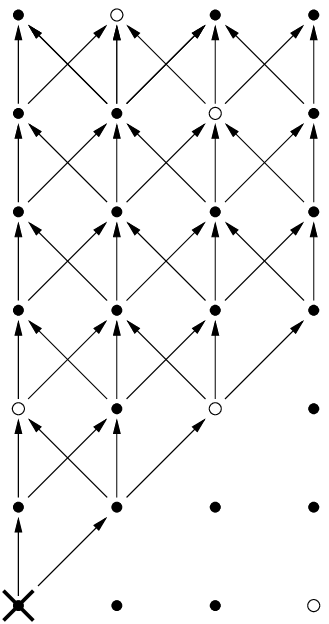}%
\end{picture}%
\setlength{\unitlength}{2072sp}%
\begingroup\makeatletter\ifx\SetFigFont\undefined%
\gdef\SetFigFont#1#2#3#4#5{%
  \reset@font\fontsize{#1}{#2pt}%
  \fontfamily{#3}\fontseries{#4}\fontshape{#5}%
  \selectfont}%
\fi\endgroup%
\begin{picture}(2931,5631)(-167,2471)
\end{picture}%
\end{center}
\caption{\label{fig:m}
The x marks a nucleation event $\mathrm{Nuc}_{x,i}$.
The horizontal axis corresponds to a path of length $M=3$ from $y_0=x\in A$ to the origin $y_M=0$. The vertical axis corresponds to time. 
The arrows indicate how the region of plus-phase can spread to neighboring points of the rescaled lattice [Proposition~\ref{prop321}].
The black dots indicate points of the rescaled space-time lattice where $G_{+,y,k}$ occurs.
The spread of the plus phase is thus bounded below by a supercritical directed-percolation cluster.}
\end{figure}

This completes the proof of Theorem~\ref{theorem:main} part (i). Part (ii) follows by Proposition~\ref{catalyst size}.

\section*{Acknowledgments}
TB and MW acknowledge the support of the French Ministry of Education through the ANR 2010 BLAN 0108 01 grant. 
BG thanks the Fondation Sciences Math\'ematiques de Paris for funding a postdoctoral fellowship at the \'Ecole Normale Sup\'erieure.


\end{document}